\documentclass[11pt, reqno]{amsart}

\usepackage{lineno, hyperref}

\modulolinenumbers[5]
\usepackage{diagbox}
\usepackage{amsmath}
\usepackage{amssymb}
\usepackage{mathrsfs}
\usepackage{amsthm}
\usepackage{bm}
\usepackage{bbm}
\usepackage{graphicx}
\usepackage{subfigure}
\usepackage{xcolor} 
\usepackage{tikz} 
\usetikzlibrary{arrows, shapes, chains} 
\usepackage{booktabs}
\usepackage{float}
\usepackage{geometry}
\usepackage{color}

\newtheorem{Def}{Definition}[section]
\newtheorem{lem}[Def]{Lemma}
\newtheorem{theo}[Def]{Theorem}
\newtheorem{pro}[Def]{Proposition}
\newtheorem{rem}[Def]{Remark}
\newtheorem{ex}[Def]{Example}
\newtheorem{assum}{Assumption}
\newtheorem{cor}[Def]{Corollary}
\usepackage{pgf}
\definecolor{Green}{RGB}{0, 128, 0}

\newcommand{\LL}{\langle}
\newcommand{\RR}{\rangle}
\newcommand{\R}{\mathbb R}
\newcommand{\mbb}{\mathbb}
\newcommand{\mbf}{\mathbf}
\newcommand{\ud}{\mathrm d}
\numberwithin{equation}{section}
\allowdisplaybreaks \allowdisplaybreaks[4]

\begin{document}

	\title[Asymptotic error distribution for tamed Euler method]{Asymptotic error distribution for tamed Euler method with coupled monotonicity condition}

	\author{Xinjie Dai}
	\address{School of Mathematics and Statistics, Yunnan University, Kunming 650500, Yunnan, China}
	\email{dxj@ynu.edu.cn}
	
	\author{Diancong Jin}
\address{School of Mathematics and Statistics, Huazhong University of Science and Technology, Wuhan 430074, China;
	Hubei Key Laboratory of Engineering Modeling and Scientific Computing, Huazhong University of Science and Technology, Wuhan 430074, China}
\email{jindc@hust.edu.cn}

	\author{Jiaoyang Xu}
\address{School of Mathematics and Statistics, Huazhong University of Science and Technology, Wuhan 430074, China}
\email{xujiaoyang@hust.edu.cn (Corresponding author)}

	\thanks{This work is supported by National Natural Science Foundation of China (Nos.\ 12401547, 12201228, 12471391), and Yunnan Fundamental Research Project (No.\ 202501AU070074).
	}

	\keywords{asymptotic error distribution, convergence in distribution, tamed Euler method, stochastic differential equation, coupled monotonicity condition}

	\begin{abstract}
		This paper establishes the asymptotic error distribution of the tamed Euler method for stochastic differential equations (SDEs) with a coupled monotonicity condition, that is, the limit distribution of the corresponding normalized error process. Specifically, for SDEs driven by multiplicative noise, we first propose a tamed Euler method parameterized by $\alpha\in (0, 1]$ and establish that its strong convergence rate is $\alpha\wedge\frac{1}{2}$. Notably, $\alpha $ can take arbitrary positive values by adjusting the regularization coefficient without altering the strong convergence rate. We then derive the asymptotic error distribution for this tamed Euler method. Further, we infer from the limit equation that among the tamed Euler method of strong order $\frac{1}{2}$, the one with $\alpha = \frac{1}{2}$ yields the largest mean-square error after a long time, while those of $\alpha>\frac{1}{2}$ share a unified asymptotic error distribution. In addition, our analysis is also extended to SDEs with additive noise and similar conclusions are obtained. Additional treatments are required to accommodate super-linearly growing coefficients, a feature that distinguishes our analysis on the asymptotic error distribution from established results.
	\end{abstract}

	\maketitle

	\textit{AMS subject classifications}:\ 60H35, 60H10, 60B10, 60F05

\section{Introduction}

In this paper, we consider the tamed Euler method of the following stochastic differential equation (SDE):\ 
\begin{align} \label{SDE1}
\begin{cases}
 \ud X(t) = f(X(t)) \ud t + g(X(t)) \ud \mbf W(t), \qquad t \in (0, T], \\
X(0) = X_0, 
\end{cases}
\end{align}
where $\mbf W(t) = (W_1(t), \ldots, W_m(t))^\top_{t\in[0, T]}$ is an $m$-dimensional standard Brownian motion defined on $(\Omega, \mathcal F, \mbf P)$ with respect to (w.r.t.) a filtration $\{\mathcal F_t\}_{t\in[0, T]}$ with $\{\mathcal F_t\}_{t\in[0, T]}$ satisfying the usual conditions. By imposing a coupled monotonicity condition, we allow the coefficients $f$ and $g$ to exhibit super-linear growth, as specified in Assumption \ref{assum1}. This relaxed regularity requirement enables \eqref{SDE1} to encompass various important models subject to stochastic perturbations, such as the financial 3/2-model and the stochastic Ginzburg--Landau equation.

Over the past decades, stochastic numerical methods have evolved into an indispensable tool for the numerical approximation of SDEs and for the characterization of their intrinsic dynamical behaviors, thereby attracting extensive attention and spurring intensive research initiatives (cf.\ \cite{Kloeden1992, Milsteinbook}). Among various numerical methods, the Euler method stands out due to its simplicity and computational efficiency, making it a commonly used tool for simulating SDEs in real-world applications. However, the performance of the Euler method is highly dependent on the regularity of coefficients of SDEs:\ the global Lipschitz condition, which was often assumed in early studies to ensure convergence, frequently fails to hold for most models (cf.\ \cite{Huzen2011}).

To overcome the inherent divergence of the standard Euler scheme under the super-linear growth condition, several modified explicit schemes have been developed. Notable examples include the stopped Euler method, the truncated Euler method, and the tamed Euler method. By introducing a regularization term to suppress growing rapidly coefficients, the tamed Euler method effectively avoids finite-time explosion and ensures strong convergence for SDEs with non-globally Lipschitz coefficients. Concerning this kind of method, significant progress has been made in the convergence analysis:\ \cite{Hutzen2012} established the strong convergence rate $\frac{1}{2}$ of the numerical method applied to \eqref{SDE1}, under the assumption that the drift coefficient satisfies the one-sided Lipschitz condition and the diffusion coefficient is globally Lipschitz continuous. Further, \cite{Sabanis2016} extended this result to non-autonomous SDEs with a coupled monotonicity condition.

Compared with the abundant results on convergence analysis, research on probabilistic limit theorems for errors between numerical and exact solutions has been far from adequate. A core concept in this under-explored area is the asymptotic error distribution:\ the limiting distribution of the normalized error process, as the step-size vanishes. This distribution not only serves as a benchmark for determining the optimal strong convergence of numerical methods but also characterizes how the error distribution evolves in the small step-size regime. Beyond its theoretical importance, the asymptotic error distribution plays a vital role in analyzing error structures (cf.\ \cite{Bouleau}) and optimizing key parameters for stochastic algorithms (cf.\ \cite{CLTMonte15, Jin2025}). In this field, the pioneering work \cite{Protter1998AOP} derived the asymptotic error distribution for the Euler method under the global Lipschitz condition, proving that the normalized error process converges weakly to a limit process governed by a linear SDE. Subsequently \cite{Protter2020SPA} extended this result to SDEs with locally Lipschitz coefficients. Recently, \cite{SDR2025} established the asymptotic error distribution of $\theta$-method for stochastic Hamiltonian systems with additive noise. Moreover, the asymptotic error distribution of stochastic Runge--Kutta method of strong order $1$, applied to Stratonovich-type SDEs with both additive and multiplicative noise, was established in \cite{Jin2025}.

To the best of our knowledge, the asymptotic error distribution for \eqref{SDE1} with a coupled monotonicity condition remains an unstudied topic in the current literature. This paper is dedicated to filling this gap. Specifically, we focus on presenting the asymptotic error distribution of the tamed Euler method applied to \eqref{SDE1} with a coupled monotonicity condition. In general, when investigating the asymptotic error distribution of a numerical method for \eqref{SDE1}, one appropriately decomposes the normalized error process into several dominant terms and remainder terms. Owing to the super-linear growth of the coefficients, it is difficult to prove that the auxiliary process, obtained by neglecting the remainder terms in the aforementioned expansion, has the same distribution as the normalized error process, a common treatment in the existing literature (cf.\ \cite{SDR2025, Jin2025}). To solve this problem, we analyze the convergence of both the dominant terms and the remainder terms in the space $\mbf C([0, T];\mbb R^d)$. Moreover, for the weak convergence of stochastic integrals in dominant terms, Jacod’s theory on convergence in distribution of conditional Gaussian martingales will play a key role (cf.\ \cite{Fukasawa2023, Jacod97, Protter1998AOP}). Further, the presence of multiplicative noise makes the normalized error process not continuous w.r.t.\ the Brownian motion, as a solution mapping. We resolve it by means of \cite[Theorem 3.2]{HJWY24}, a uniform approximation theorem for convergence in distribution, where the normalized error process is again discretized by the tamed Euler method in view of the super-linear growth condition; see Lemma \ref{A1}.

Our main results, the asymptotic error distribution for the tamed Euler method, include Theorem \ref{maintheorem1} for the case of multiplicative noise, and Theorem \ref{maintheorem2} for the case of additive noise. Concerning the multiplicative noise, we first propose a tamed Euler method \eqref{TEM} and establish its strong convergence order of $\alpha\wedge\frac{1}{2}$; see Theorem \ref{TEMconverge}. Then Theorem \ref{M2} indicates that $\alpha $ can take arbitrary positive values by adjusting the regularization coefficient without altering the strong convergence rate, which successfully extends the range of $\alpha$ from $(0, \frac{1}{2}]$ in \cite{Sabanis2016} to $(0, \infty) $. Further, we derive the asymptotic error distribution of the tamed Euler method \eqref{TEM}, i.e., $N^{\frac{1}{2}\wedge\alpha}(\bar{X}^\alpha_N-X(T)) \overset{d}{\Longrightarrow} U^\alpha(T)$ with $\{U^\alpha(t)\}_{t\in[0, T]}$ satisfying a linear SDE; see Theorem \ref{maintheorem1}. In addition, Corollary \ref{Cor1} infers that among the tamed Euler method \eqref{TEM} of strong order $\frac{1}{2}$, the one with $\alpha = \frac{1}{2}$ has the largest mean-square error after a long time; see also Remark \ref{RA}. Moreover, we prove in Section \ref{Secadd} that similar conclusions hold for the tamed Euler method \eqref{TEMA} applied to SDEs with additive noise.

Even though the error analysis for additive noise is generally more tractable compared with multiplicative noise, the asymptotic error distribution may not be easier to handle. This is because the numerical method has a higher strong convergence order in the case of additive noise, which requires expanding the error process at least to the first-order terms. Meanwhile, when investigating the asymptotic error distribution for the numerical method for additive noise, we encounter a new problem:\ For $\alpha \ge 1$, it appears that the $\mbf L^p$-convergence of the terms $J_2^{\alpha, N}$ and $J_3^{\alpha, N}$, which originate from the expansion of the normalized error process (cf.\ \eqref{YY0}), can currently be established only for fixed time points. To address this issue, we employ a different argument to establish their convergence in probability in the space $\mbf C([0, T];\R^d)$ in Lemma \ref{J2C}, thereby successfully deriving the asymptotic error distribution.

We summarize the primary contributions of this work as follows:\ (1) We introduce a parameterized tamed Euler method where the regularization parameter $\alpha$ is permitted to be arbitrarily large while maintaining a tractable framework for convergence analysis. (2) We establish the asymptotic error distribution for the proposed scheme under a coupled monotonicity condition, thereby accommodating super-linearly growing diffusion coefficients for the first time in the literature.

This paper consists of six sections. Section \ref{Sec2} introduces some notations and the definition of stable convergence in distribution. In Section \ref{Sec3}, under a coupled monotonicity condition, we present a tamed Euler method for multiplicative noise and give its strong convergence rate. Then Sections \ref{Sec4} and \ref{Secadd} establish the asymptotic error distribution of the tamed Euler method for the cases of multiplicative noise and additive noise, respectively. In Section \ref{Sec5}, we verify the theoretical results through some numerical examples.

\section{Preliminaries} \label{Sec2}

\subsection{Notation}

We use $|\cdot|$ to denote the trace norm of a matrix or vector. The scalar product of two vectors is denoted by $\LL\cdot, \cdot\RR$. Let $\mbf L^p(\Omega, \mathcal F, \mbf P;\mbb R^d)$, $p \ge 1$, be the Banach space consisting of $p$th integrable $\mbb R^d$-valued random variables $X$, equipped with the usual norm $\|X\|_{\mbf L^p(\Omega)} := (\mbf E|X|^p)^{1/p}$. Denote by $\mbf P\circ X^{-1}$ the distribution of a random element $X$ defined on the probability space $(\Omega, \mathcal{F}, \mbf P) $. Let $\lfloor \cdot\rfloor$ and $\lceil\cdot\rceil$ stand for the floor function and ceiling function of a real number, respectively. Additionally, $ \overset{d}{\Longrightarrow} $ (resp.\ $\overset{\mbf P}{\longrightarrow}$) denotes the convergence in distribution (resp.\ in probability) of a family of random variables.

Let $\mathbf C(\mathbb R^d)$ (resp.\ $\mathbf C^{k}(\mathbb R^d)$) denote the space of continuous (resp.\ $k$th continuously differentiable) functions defined on $\mathbb R^d$. For a real-valued function $f\in\mathbf C^k(\mathbb R^d)$, $\mathcal D^k f(x) (\xi_1, \ldots, \xi_k)$ denotes the $k$th order G\v ateaux derivative along the directions $\xi_1, \ldots, \xi_k\, \in\mathbb R^d$. 
For an $\mathbb R^m$-valued function $f = (f_1, \ldots, f_m)^\top\in\mathbf C^k(\mathbb R^d)$, we define $\mathcal D ^kf(x) (\xi_1, \ldots, \xi_k)$ as the vector $(\mathcal D ^kf_1(x) (\xi_1, \ldots, \xi_k), \ldots, \mathcal D ^kf_m(x) (\xi_1, \ldots, \xi_k))^\top$.
Let $\mathbf F$ be the set of functions with at most polynomial growth, i.e., a (tensor-valued) function $f\in\mathbf F$ means that there exist constants $C>0$ and $\eta>0$ such that for any $x\in\mathbb R^d$, $|f(x) | \le C(1 + |x|^\eta)$ or $\|f(x) \|_{\otimes} \le C(1 + |x|^\eta)$, where $\otimes$ denotes the norm of a tensor. 
Let $\mbf C^{\alpha}([a, b];\R^d)$, $0<\alpha \le 1$, be the Banach space consisting of all $\alpha$-uniformly H\"older continuous $\R^d$-valued functions defined on $[a, b]\subseteq\mbb R$, equipped with the norm $\|f\|_{\mbf C^{\alpha}([a, b])} := [f]_{\mbf C^{\alpha}([a, b])} + \|f\|_{\mbf C([a, b])}$, where the semi-norm $[f]_{\mbf C^{\alpha}([a, b])} := \sup_{\substack{t, s\in[a, b]\\ t\neq s}}\frac{|f(t)-f(s)|}{|t-s|^\alpha}$ and the supremum norm $\|f\|_{\mbf C([a, b])}:= \sup_{t\in[a, b]}|f(t)|$.
Throughout this paper, let $K(a_1, a_2, ..., a_m)$ be some generic constant which depends on the parameters $a_1, a_2, \ldots, a_m$ but is independent of the temporal step-size, which may vary for each appearance.

\subsection{Stable convergence in distribution}

In this part, we introduce the notion of stable convergence in distribution and present two useful lemmas.

Let $(E, d_E)$ be a Polish space with the metric $d_E$. Consider a sequence $\{X_n\}_{n = 1}^\infty$ of $E$-valued random variables, each defined on the probability space $(\Omega, \mathcal{F}, \mathbf{P})$.
Let $(\widetilde{\Omega}, \widetilde{\mathcal{F}}, \widetilde{\mathbf{P}})$ denote an extension of $(\Omega, \mathcal{F}, \mathbf{P})$ and let $X$ be an $E$-valued random variable defined on the extension.
We say that $X_n$ stably converges in distribution to $X$ in $E$, denoted by $X_n\overset{stably}{\Longrightarrow}X$ in $E$, if $\mathbf{E}[Z f(X_n)]$ converges to $ \widetilde{\mathbf{E}}[Z f(X) ]$ for every bounded and continuous $f:E\to \mathbb{R}$ and every bounded random variable $Z$ on $(\Omega, \mathcal{F})$. Here, $\widetilde{\mathbf E}$ represents the expectation under the measure $\widetilde{\mathbf{P}}$. As an immediate consequence of the above definition, $X_n\overset{stably}{\Longrightarrow}X$ implies $X_n \overset{d}{\Longrightarrow} X$.
For more details on the stable convergence in distribution, we refer the reader to \cite{Jacod97}. Two useful properties about stable convergence in distribution are as follows.

\begin{pro} \cite[Lemma 2.1]{Protter1998AOP} \label{Pro1}
Let $Y$ be a random variable taking values in another Polish space $F$. If $X_n\overset{stably}{\Longrightarrow}X$ in $E$, then $(Y, X_n)\overset{stably}{\Longrightarrow}(Y, X)$ in $F\times E$.
\end{pro}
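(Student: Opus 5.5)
Since $X_n$ stably converges to $X$, we need to show that $\mathbf E[Z\, h(Y, X_n)]\to\widetilde{\mathbf E}[Z\, h(Y, X)]$ for every bounded $\mathcal F$-measurable $Z$ and every bounded continuous $h:F\times E\to\mathbb R$. We first reduce the class of test functions. Define measures $\mu_n(A):=\mathbf E[Z\,\mathbf 1_A(Y,X_n)]$ and $\mu(A):=\widetilde{\mathbf E}[Z\,\mathbf 1_A(Y,X)]$, and split $Z=Z^+-Z^-$. It then suffices to treat $Z\ge0$, and after normalizing, $\mathbf E Z=1$. In that case $\mu_n$ and $\mu$ are probability measures on the Polish space $F\times E$, and the claim is exactly the weak convergence $\mu_n\Rightarrow\mu$.

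\textbf{Step 1 (product functions).} Take $h(y,x)=\phi(y)\psi(x)$ with $\phi\in C_b(F)$ and $\psi\in C_b(E)$. Because $Y$ is $\mathcal F$-measurable, $Z\phi(Y)$ is a bounded $\mathcal F$-measurable random variable. The definition of stable convergence therefore gives
\[
\mathbf E[Z\phi(Y)\psi(X_n)]\to\widetilde{\mathbf E}[Z\phi(Y)\psi(X)],
\]
which is the required convergence for product functions.

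\textbf{Step 2 (tightness).} The $E$-marginals of $\mu_n$ are $\mathbf E[Z\,\mathbf 1_B(X_n)]$. By Step 1 with $\phi\equiv1$, these converge weakly to a probability measure on a Polish space, so by Prokhorov's theorem they are tight. The $F$-marginal is the single fixed measure $\mathbf E[Z\,\mathbf 1_\cdot(Y)]$, which is tight. Since products of compacts are compact, the family $\{\mu_n\}$ is tight on $F\times E$.

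\textbf{Step 3 (identification of the limit).} Let $\nu$ be any subsequential weak limit of $\{\mu_n\}$. By Step 1, $\int\phi\otimes\psi\,d\nu=\int\phi\otimes\psi\,d\mu$ for all $\phi\in C_b(F)$ and $\psi\in C_b(E)$. The class of such products is closed under multiplication and separates points, so it determines finite measures on a product of Polish spaces. Hence $\nu=\mu$. Since every subsequence has a further subsequence converging to $\mu$, the whole sequence satisfies $\mu_n\Rightarrow\mu$, which proves the claim.

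\textbf{Main obstacle.} The delicate point is passing from product test functions to general $h\in C_b(F\times E)$. The tightness argument above handles this. A more direct alternative would be to approximate $h$ uniformly on compact rectangles by finite sums of products via Stone--Weierstrass, and then control what happens outside the rectangles using the same tightness.
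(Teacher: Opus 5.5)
Your proof is correct. The paper gives no argument for this proposition; it quotes it from \cite[Lemma 2.1]{Protter1998AOP}, so there is no in-paper proof to compare with.

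A standard justification in the stable-convergence literature is different from yours. It uses the fact that convergence of $\mathbf E[Z f(X_n)]$, for bounded $\mathcal F$-measurable $Z$ and $f\in C_b(E)$, extends to all bounded test functions $\varphi(\omega,x)$ that are jointly measurable and continuous in $x$. Taking $\varphi(\omega,x)=Z(\omega)h(Y(\omega),x)$, the proposition follows in one line.

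Your route avoids that extension theorem and proves by hand exactly the case needed:
\begin{itemize}
\item after reducing to $Z\ge 0$ with $\mathbf E Z=1$, the claim becomes weak convergence of the probability measures $\mu_n$ on $F\times E$;
\item product test functions $\phi\otimes\psi$ are covered by the definition, because $Z\phi(Y)$ is again an admissible bounded weight;
\item tightness comes from the converse half of Prokhorov's theorem on the $E$-marginals, together with tightness of the single $F$-marginal;
\item the products form a separating class, so every subsequential limit equals $\mu$.
\end{itemize}
The extension-theorem route is shorter and yields more (arbitrary Carath\'eodory test functions). Yours is self-contained and uses only Prokhorov's theorem and a separating-class argument.

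Two details you leave implicit are harmless. First, if $Z\ge 0$ and $\mathbf E Z=0$, then $Z=0$ a.s.\ and the claim is trivial. Second, on the extension, $Y$ and $Z$ are understood as lifted through the canonical projection onto $\Omega$; this is what gives $\widetilde{\mathbf E}Z=\mathbf E Z$ and makes $\mu$ a probability measure.

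In Step 3 you can avoid quoting the ``algebra separating points'' theorem. For closed $A\subseteq F$ and $B\subseteq E$, approximate $\mathbf 1_A$ and $\mathbf 1_B$ from above by bounded continuous functions to get $\nu(A\times B)=\mu(A\times B)$. Then apply Dynkin's $\pi$--$\lambda$ theorem, since closed rectangles form a $\pi$-system generating $\mathcal B(F)\otimes\mathcal B(E)=\mathcal B(F\times E)$ (by separability).
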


\begin{pro} \cite[Lemma 2.4]{SDR2025} \label{Pro2}
Let $\{Y_n\}_{n = 1}^\infty$ be a sequence of E-valued random variables. Assume that $X_n\overset{stably}{\Longrightarrow} X$ in E, and the distance $d_{E}(Y_n, X_n)$ converges in probability to $0$. Then $Y_n\overset{stably}{\Longrightarrow}X$ in E.
\end{pro}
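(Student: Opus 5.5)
To prove Proposition \ref{Pro2}, I would reduce stable convergence of $Y_n$ to stable convergence of $X_n$ through a Lipschitz approximation of the test function, controlling the error by the distance $d_E(Y_n,X_n)$. Fix a bounded random variable $Z$ on $(\Omega,\mathcal F)$ and a bounded continuous $f:E\to\R$. The goal is $\mathbf E[Zf(Y_n)]\to\widetilde{\mathbf E}[Zf(X)]$.

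\emph{Step 1: reduction to bounded Lipschitz $f$.} Stable convergence $\mathbf E[Zf(X_n)]\to\widetilde{\mathbf E}[Zf(X)]$ for all bounded Lipschitz $f$ and all bounded $Z$ already implies it for all bounded continuous $f$. This is a Portmanteau-type statement. Without loss of generality take $Z\ge0$ with $\mathbf E Z=1$, by splitting $Z=Z^+-Z^-$ and normalizing (the case $\mathbf E Z=0$ with $Z\ge 0$ is trivial). Define the probability measures $Q_n(B):=\mathbf E[Z\mathbf 1_B(Y_n)]$ and $Q(B):=\widetilde{\mathbf E}[Z\mathbf 1_B(X)]$ on $E$. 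Convergence of integrals of bounded Lipschitz functions is equivalent to weak convergence $Q_n\Rightarrow Q$ on the metric space $E$, which is the standard Portmanteau theorem. So it suffices to treat bounded Lipschitz $f$.

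\emph{Step 2: the estimate.} For bounded Lipschitz $f$ with Lipschitz constant $L$ and bound $M$, and any $\varepsilon>0$, I would split according to whether $d_E(Y_n,X_n)$ is small:
\[
\bigl|\mathbf E[Zf(Y_n)]-\mathbf E[Zf(X_n)]\bigr|\le \|Z\|_\infty\Bigl(L\varepsilon+2M\,\mathbf P\bigl(d_E(Y_n,X_n)>\varepsilon\bigr)\Bigr).
\]
The second term tends to $0$ by the hypothesis $d_E(Y_n,X_n)\overset{\mbf P}{\longrightarrow}0$. Meanwhile $\mathbf E[Zf(X_n)]\to\widetilde{\mathbf E}[Zf(X)]$ since $X_n\overset{stably}{\Longrightarrow}X$. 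Taking $\limsup_n$ and then letting $\varepsilon\to0$ gives the claim for Lipschitz $f$.

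\emph{Step 3: conclusion.} Combining Steps 1 and 2 yields the convergence for every bounded continuous $f$, that is, $Y_n\overset{stably}{\Longrightarrow}X$.

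The only subtle point is Step 1, namely justifying the passage from Lipschitz to merely continuous test functions in the stable (weighted by $Z$) setting. The normalization device above handles it, since $Z\ge0$ makes $Q_n$ and $Q$ genuine probability measures, to which Portmanteau applies. The measurability of $d_E(Y_n,X_n)$ is guaranteed because $E$ is separable.
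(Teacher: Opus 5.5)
Your proof is correct. The estimate in Step~2 is right. The normalization in Step~1 is also legitimate: for bounded $Z\ge 0$ with $\mathbf E Z=1$, both $B\mapsto \mathbf E[Z\mathbf 1_B(Y_n)]$ and $B\mapsto \widetilde{\mathbf E}[Z\mathbf 1_B(X)]$ are probability measures on $E$. For the second, this holds because $\widetilde{\mathbf P}$ extends $\mathbf P$, so $\widetilde{\mathbf E}Z=\mathbf EZ=1$. The Portmanteau theorem on a metric space then upgrades convergence against bounded Lipschitz test functions to all bounded continuous ones, and linearity handles general bounded $Z$.

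The paper does not prove this proposition; it is imported from \cite[Lemma 2.4]{SDR2025}, so there is no in-paper argument to compare against. For comparison, there is an alternative route that fits the surrounding text slightly better, using Proposition~\ref{Pro1} with $F=\R$:
\begin{itemize}
\item Proposition~\ref{Pro1} gives $(Z,X_n)\overset{d}{\Longrightarrow}(Z,X)$ in $\R\times E$.
\item The distance between $(Z,Y_n)$ and $(Z,X_n)$ is $d_E(Y_n,X_n)\overset{\mbf P}{\longrightarrow}0$, so the classical converging-together (Slutsky) lemma yields $(Z,Y_n)\overset{d}{\Longrightarrow}(Z,X)$.
\item Testing against $(z,y)\mapsto \phi(z)f(y)$, with $\phi$ the truncation of the identity at level $\|Z\|_\infty$, gives $\mathbf E[Zf(Y_n)]\to\widetilde{\mathbf E}[Zf(X)]$.
\end{itemize}
That version hides your Lipschitz/Portmanteau reduction inside the proof of the converging-together lemma. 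Yours is more elementary and self-contained: it works directly from the definition of stable convergence and uses only separability of $E$, for the measurability of $d_E(Y_n,X_n)$.
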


\section{Strong convergence of tamed Euler method for multiplicative noise} \label{Sec3}

In this section, we establish the strong convergence of the tamed Euler method for SDEs with multiplicative noise, where the following assumptions are imposed.

	\begin{assum} \label{assum1}
		Assume that the following conditions hold.
		\begin{itemize}
			
			\item [(A-1)] There exist constants $L$, $l>0$ and $p_0>2$ such that
			\begin{align}
				& 2\LL x-y, f(x) -f(y)\RR + (p_0-1)|g(x) -g(y)|^2 \le L|x-y|^2, \label{coupled monotone} \\
				& |f(x) -f(y)| \le L(1 + |x|^l + |y|^l)|x-y|, \label{fmonotone}
			\end{align}
			for all $x$, $y\in \R^d$.
			
			\item [(A-2)] $X_0 \in \mbf L^{p_0}(\Omega, \mathcal F, \mbf P;\mbb R^d)$. 
		\end{itemize}
	\end{assum}

	\begin{rem} \label{Rem1}
		It follows from Assumption \ref{assum1} that there exists $L_1>0$ such that
		\begin{align}
			&|f(x) | \le L_1(1 + |x|^{l + 1}), \label{f l}\\
			&|g(x) |^2 \le L_1(1 + |x|^{l + 2}), \label{g l}\\
			&|g(x) -g(y)|^2 \le L_1(1 + |x|^l + |y|^l)|x-y|^2 \label{g M}, \\
			&2\LL x, f(x) \RR + (p_1-1)|g(x) |^2 \le L_1(1 + {|x|}^2) \label{monotone}, 
		\end{align} 
		for all $x$, $y\in\R^d$ and $p_1<p_0$.
		Then under Assumption \ref{assum1}, SDE \eqref{SDE1} admits a unique strong solution given by
		\begin{align} \label{functionX}
			X(t) = X_0 + \int_{0}^{t}f(X(s)) \ud s + \int_{0}^{t}g(X(s)) \ud \mbf W(s), \qquad t\in[0, T].
		\end{align}
		Moreover, for any $p< p_0$, and $t\in[0, T]$, 
		\begin{align}
			\|X(t)\|_{\mbf L^p(\Omega)}& \le C_1(p)e^{C_1(p)T} \big( 1 + \|X_0\|_{\mbf L^p(\Omega)} \big), \label{Ybound}
		\end{align}
		where $C_1(p)$ depends on $p$ but independent of $T$ (cf.\ \cite[Chapter 2.4]{Maoxuerong}).
	\end{rem}

	Consider the following tamed Euler method for SDE \eqref{SDE1}:
	\begin{align} \label{TEM}
		\bar{X}_{n + 1}^{\alpha} = \bar{X}_n^{\alpha} + \frac{T}{N}f^{\alpha}(\bar{X}_n^{\alpha}) + g^{\alpha}(\bar{X}_n^{\alpha}) \left( \mbf W\big(\frac{(n + 1)T}{N}\big)-\mbf W\big(\frac{nT}{N}\big) \right), 
	\end{align}
	for $n = 0, 1, \ldots, N-1$, $N\in \mathbb N$ with $\bar{X}_0^{\alpha} = X_0$, where $f^{\alpha}(x) := \frac{f(x) }{1 + (\frac{T}{N})^{\alpha}|x|^{2l}}$ and $g^{\alpha}(x) := \frac{g(x) }{1 + (\frac{T}{N})^{\alpha}|x|^{2l}}$ with $\alpha\in (0, 1]$.

For convenience, we introduce the continuous version of ${\lbrace \bar {X}_n^{\alpha}\rbrace}_{n = 0}^N$:
	\begin{align} \label{CTEM}
		X^\alpha_N(t) = X_0 + \int_{0}^{t} f^{\alpha}(X^\alpha_N(\kappa_N(s))) \ud s + \int_{0}^{t} g^{\alpha}(X^\alpha_N(\kappa_N(s))) \ud \mbf W(s), \qquad t\in(0, T], 
	\end{align}
where $\kappa_N(s) = \lfloor\frac{Ns}{T}\rfloor\frac{T}{N}$, $s\in[0, T]$. It is easy to see that $X_N^\alpha(t_k) = \bar{X}^\alpha_k$, $k = 0, 1, \ldots, N$.

To investigate the strong convergence order of the tamed Euler method \eqref{CTEM}, we first establish the following lemma on the $\mbf L^p$-boundedness of the numerical solution.
\begin{lem} \label{TEMbound}
		Let Assumption \ref{assum1} hold. Then for every positive $p< p_0$, 
		\begin{align*}
			\sup_{N\in \mathbb{N}}\sup_{t\in[0, T]}\mathbf{E}|X^\alpha_N(t)|^p \le K(p, T, \mbf E|X_0|^{p_0}).
		\end{align*}
\end{lem}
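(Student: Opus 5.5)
We follow the strategy of \cite{Sabanis2016}. We apply It\^o's formula to $(1+|X_N^\alpha(t)|^2)^{p/2}$ for $p\in[2,p_0)$; the case $p<2$ then follows from H\"older's inequality. This gives
\begin{align*}
(1+|X_N^\alpha(t)|^2)^{p/2} \le{}& (1+|X_0|^2)^{p/2} + \frac p2\int_0^t (1+|X_N^\alpha(s)|^2)^{\frac p2-1}\Big(2\LL X_N^\alpha(s), f^\alpha(X_N^\alpha(\kappa_N(s)))\RR \\
&+ (p-1)|g^\alpha(X_N^\alpha(\kappa_N(s)))|^2\Big)\ud s + M(t),
\end{align*}
where $M$ is a local martingale. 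In the drift we write $X_N^\alpha(s)=X_N^\alpha(\kappa_N(s))+\Delta(s)$ with $\Delta(s)=\int_{\kappa_N(s)}^s f^\alpha\,\ud r+\int_{\kappa_N(s)}^s g^\alpha\,\ud\mbf W$. The principal part $2\LL x, f^\alpha(x)\RR+(p-1)|g^\alpha(x)|^2$ at $x=X_N^\alpha(\kappa_N(s))$ is bounded using \eqref{monotone} with $p_1=p$. Since $1+h^\alpha|x|^{2l}\ge1$, dividing by it (and dividing $|g|^2$ by its square, which is smaller) preserves the bound $L_1(1+|x|^2)$. Combined with the factor $(1+|X_N^\alpha(s)|^2)^{p/2-1}$ and Young's inequality, this part contributes $K\int_0^t(1+|X_N^\alpha(s)|^2+|X_N^\alpha(\kappa_N(s))|^2)^{p/2}\ud s$.

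The key tool is the taming bounds. With $h=T/N$, \eqref{f l}--\eqref{g l} give
\[
|f^\alpha(x)|\le K\min\{1+|x|^{l+1},\,h^{-\alpha}(1+|x|)\}, \qquad |g^\alpha(x)|^2\le K\min\{1+|x|^{l+2},\,h^{-\alpha}(1+|x|^2)\},
\]
and more generally $|f^\alpha(x)|\le K h^{-\alpha\theta}(1+|x|^{l+1-2l\theta})$ for $\theta\in[0,1]$. Consequently, for $\alpha\le 1$, $\mbf E[|\Delta(s)|^q\,|\,\mathcal F_{\kappa_N(s)}]\le K(1+|X_N^\alpha(\kappa_N(s))|^q)$ uniformly in $N$. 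Indeed, $h|f^\alpha|\le Kh^{1-\alpha}(1+|x|)$, and $h^{1/2}|g^\alpha|$ is controlled by interpolating $h^{1/2}(1+|x|^{l/2+1})$ against $h^{(1-\alpha)/2}(1+|x|)$. The cross term $2\LL\Delta(s), f^\alpha(X_N^\alpha(\kappa_N(s)))\RR$ then needs more care. We split $\Delta$ into its drift part, which gives $h|f^\alpha|^2\le K(1+|x|^2)h^{1-2\alpha}\cdot h^{\alpha}\cdots$ (after choosing the interpolation carefully, this is $\le K(1+|x|^2)$ for $\alpha\le 1$), and its martingale part $\int g^\alpha\ud\mbf W$. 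For the latter we also expand the factor $(1+|X_N^\alpha(s)|^2)^{p/2-1}$ around $\kappa_N(s)$. The leading term has zero conditional expectation, and the remainder is a product of $|\Delta|$ with $|f^\alpha|$ and powers; we bound it by $K(1+|X_N^\alpha(\kappa_N(s))|^p)$ using the conditional moments above.

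I expect this cross term to be the main obstacle. Its naive size is $|x|^{p-2}\cdot|x|\cdot h^{1/2}|g^\alpha||f^\alpha|$, and one must check that the powers of $h$ produced by taming exactly absorb the superlinear growth $|x|^{l+1}\cdot|x|^{l/2+1}$. That is where the specific exponent $2l$ in the denominator and the restriction $\alpha\le1$ enter. If the direct expansion is messy, I would first try the approach of \cite{Sabanis2016}: first establish a uniform bound on $\mbf E|\Delta(s)|^q(1+|X_N^\alpha(\kappa_N(s))|)^r$, then apply H\"older's inequality.

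Finally, to handle $M$ we localize with stopping times $\tau_R=\inf\{t:|X_N^\alpha(t)|>R\}$, so that the stopped martingale has zero expectation. Taking expectations gives
\[
\sup_{r\le t}\mbf E(1+|X_N^\alpha(r\wedge\tau_R)|^2)^{p/2}\le \mbf E(1+|X_0|^2)^{p/2}+K\int_0^t\sup_{r\le s}\mbf E(1+|X_N^\alpha(r\wedge\tau_R)|^2)^{p/2}\ud s.
\]
For fixed $N$ the scheme has finite moments, since the tamed coefficients have at most linear growth with $N$-dependent constants, so the left side is finite. Gronwall's inequality and Fatou's lemma as $R\to\infty$ then give the bound, uniform in $N$ and $t$ and depending only on $p,T,\mbf E|X_0|^{p_0}$.
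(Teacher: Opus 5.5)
\textbf{There is a genuine gap at the crux.} Your overall strategy is the argument behind \cite[Lemma 2]{Sabanis2016}: It\^o's formula for a Lyapunov function, tamed monotonicity for the principal part, then Gr\"onwall. The paper does not redo that argument; it checks the hypotheses of that lemma and cites it. Your sketch leaves open exactly the step you flag as the main obstacle, and the bounds you record are too weak to close it.

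Write $x=X_N^\alpha(\kappa_N(s))$ and $G=\int_{\kappa_N(s)}^s g^\alpha(x)\,\ud\mbf W$. After expanding $(1+|X_N^\alpha(s)|^2)^{p/2-1}$ around $\kappa_N(s)$, the remainder of the martingale cross term has conditional size about $(1+|x|)^{p-3}\,h\,|g^\alpha(x)|^2\,|f^\alpha(x)|$.
\begin{itemize}
\item The conditional-moment bound $\mbf E[|\Delta(s)|^q\,|\,\mathcal F_{\kappa_N(s)}]\le K(1+|x|^q)$ only gives $(1+|x|)^{p-1}|f^\alpha(x)|$, which is not uniformly $\le K(1+|x|)^p$.
\item With your bounds $|f^\alpha(x)|\le Kh^{-\alpha/2}(1+|x|)$ (your $\theta=\frac12$) and $|g^\alpha(x)|^2\le Kh^{-\alpha}(1+|x|^2)$, you get $h^{1-3\alpha/2}(1+|x|)^p$, which diverges for $\alpha>\frac23$.
\item Interpolating the two bounds you list for $g^\alpha$ does not help: at linear growth in $|x|$ they never beat $h^{-\alpha}$.
\end{itemize}

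\textbf{The missing estimate is the one the paper's proof rests on.} Since $|g|^2$ grows only like $|x|^{l+2}$, one factor of the squared denominator suffices. Applying $1+a\ge2\sqrt a$ with $a=(T/N)^\alpha|x|^{2l}$ gives
\begin{itemize}
\item $|f^\alpha(x)|\le KN^{\alpha/2}(1+|x|)$, and
\item $|g^\alpha(x)|^2\le KN^{\alpha/2}(1+|x|^2)$,
\end{itemize}
with $\frac\alpha2\in(0,\frac12]$. This is precisely where $\alpha\le1$ enters. Together with $2\LL x,f^\alpha(x)\RR+(p-1)|g^\alpha(x)|^2\le K(1+|x|^2)$, these are the hypotheses of \cite[Lemma 2]{Sabanis2016}, and the paper concludes by citation. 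In your own route they give $h|f^\alpha(x)|^2\le Kh^{1-\alpha}(1+|x|^2)$ and $h|g^\alpha(x)|^2|f^\alpha(x)|\le Kh^{1-\alpha}(1+|x|)^3$, which closes the cross term.

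\textbf{Two smaller repairs are needed if you carry the argument out yourself.}
\begin{itemize}
\item With $\tau_R=\inf\{t:|X_N^\alpha(t)|>R\}$, the indicator $\mathbf 1_{\{s\le\tau_R\}}$ is not $\mathcal F_{\kappa_N(s)}$-measurable. So the zero-conditional-expectation step fails as stated. Either stop at grid times, or drop localization: for fixed $N$ the $N$-dependent linear growth gives $\mbf E\sup_t|X_N^\alpha(t)|^p<\infty$, so $M$ is a true martingale.
\item For $2\le p<3$, the Lipschitz bound on $(1+|y|^2)^{p/2-1}$ loses the factor $(1+|x|)^{p-3}$. You must split according to whether $|\Delta(s)|\le(1+|x|)/2$.
\end{itemize}
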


\begin{proof}
Based on \eqref{monotone}, a direct calculation shows that for any $p< p_0$ and $x\in \R^d$, 
		\begin{align} \label{tamed monotone}
			2\LL x, f^\alpha(x) \RR + (p-1){|g^\alpha(x) |}^2 \le K(1 + {|x|}^2).
		\end{align}
Due to $a + b \geq 2\sqrt{ab}$, for $a$, $b \geq0$, \eqref{f l}, and \eqref{g l}, one obtains that for $x\in \R^d$, 
		\begin{align}
			& |f^{\alpha}(x) | \le \frac{L_1(1 + |x|^{l + 1})}{1 + (\frac{T}{N})^\alpha|x|^{2l}} \le K(T)N^{\frac{\alpha}{2}}(1 + |x|),\\
			& |g^{\alpha}(x) |^2 \le \frac{L_1(1 + |x|^{l + 2})}{1 + 2(\frac{T}{N})^\alpha|x|^{2l}} \le K(T)N^{\frac{\alpha}{2}}(1 + |x|^2) \label{tamed g}, 
		\end{align}
with $\frac{\alpha}{2}\in (0, \frac{1}{2}]$.
By Assumption \ref{assum1}, and \eqref{tamed monotone}--\eqref{tamed g}, it follows from \cite[Lemma 2]{Sabanis2016} that for any $p<p_0$, $\sup_{N\in \mathbb{N}}\sup_{t\in[0, T]}\mathbf{E}|X^\alpha_N(t)|^p \le K(p, T, \mbf E|X_0|^{p_0})$, which completes the proof.
\end{proof}

Based on Lemma \ref{TEMbound}, we derive the following two lemmas which are useful for the error analysis.

\begin{lem} \label{TEMfunction}
		Let Assumption \ref{assum1} hold. Then for any $0<p< \frac{p_0}{3l + 1}$, 
		\begin{align}
			&\; \mbf E\int_{0}^{T}|f^{\alpha}(X^\alpha_N(\kappa_N(s)))-f(X^\alpha_N(\kappa_N(s)))|^p \ud s \le K(p, T, \mbf E|X_0|^{p_0})N^{-\alpha p}, \\
			&\; \mbf E\int_{0}^{T}|g^{\alpha}(X^\alpha_N(\kappa_N(s)))-g(X^\alpha_N(\kappa_N(s)))|^p \ud s \le K(p, T, \mbf E|X_0|^{p_0})N^{-\alpha p}.
		\end{align}
	\end{lem}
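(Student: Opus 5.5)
A pointwise algebraic identity handles the taming, after which the moment bounds of Lemma \ref{TEMbound} control everything. For any $x\in\R^d$,
\begin{align*}
f^\alpha(x)-f(x) = -\frac{(\frac{T}{N})^\alpha|x|^{2l}f(x)}{1+(\frac{T}{N})^\alpha|x|^{2l}},
\end{align*}
and since the denominator is at least $1$, we get $|f^\alpha(x)-f(x)|\le (\frac{T}{N})^\alpha|x|^{2l}|f(x)|$. By \eqref{f l},
\begin{align*}
|f^\alpha(x)-f(x)|\le L_1T^\alpha N^{-\alpha}|x|^{2l}(1+|x|^{l+1})\le K(T)N^{-\alpha}(1+|x|^{3l+1}).
\end{align*}
In the same way, using \eqref{g l} in the form $|g(x)|\le \sqrt{L_1}(1+|x|^{\frac{l}{2}+1})$, we get
\begin{align*}
|g^\alpha(x)-g(x)|\le K(T)N^{-\alpha}(1+|x|^{\frac{5l}{2}+1}).
\end{align*}
This exponent is dominated by $3l+1$ after enlarging the constant.

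Next, substitute $x=X^\alpha_N(\kappa_N(s))$, raise both bounds to the power $p$, and use $(a+b)^p\le K(p)(a^p+b^p)$. Taking expectations and applying Fubini gives
\begin{align*}
\mbf E\int_0^T|f^\alpha(X^\alpha_N(\kappa_N(s)))-f(X^\alpha_N(\kappa_N(s)))|^p\ud s \le K(p,T)N^{-\alpha p}\int_0^T\big(1+\mbf E|X^\alpha_N(\kappa_N(s))|^{(3l+1)p}\big)\ud s,
\end{align*}
and the analogous estimate holds for $g$.

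The condition $p<\frac{p_0}{3l+1}$ gives exactly $(3l+1)p<p_0$. Lemma \ref{TEMbound} therefore bounds $\sup_{N}\sup_{t}\mbf E|X^\alpha_N(t)|^{(3l+1)p}$ by $K(p,T,\mbf E|X_0|^{p_0})$, uniformly in $N$. Integrating over $[0,T]$ then yields both claimed estimates.

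The only point needing care is matching the moment order to the hypothesis. The polynomial degree $2l+(l+1)=3l+1$ coming from the $f$ bound is the one that fixes the admissible range of $p$. The $g$ bound needs only the lower degree $\frac{5l}{2}+1$, so it is covered by the same moment bound.
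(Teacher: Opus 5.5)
Your proposal is correct and follows essentially the same route as the paper: bound $|f^\alpha(x)-f(x)|\le (\frac{T}{N})^\alpha|x|^{2l}|f(x)|$, use the growth bounds \eqref{f l} and \eqref{g l} to reduce everything to moments of order $(3l+1)p$, and close with the uniform moment bound of Lemma \ref{TEMbound}. Your remark that the $g$-estimate only needs moments of order $(\frac{5}{2}l+1)p$ also matches the paper's proof.
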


\begin{proof}
The application of Lemma \ref{TEMbound} and \eqref{f l} yields that for $p< \frac{p_0}{3l + 1}$, 
		\begin{align*}
			&\ \mbf E\int_{0}^{T}|f^{\alpha}(X^\alpha_N(\kappa_N(s)))-f(X^\alpha_N(\kappa_N(s)))|^p \ud s\\
			& \le \mbf E\int_{0}^{T}(\frac{T}{N})^{\alpha p}|X^\alpha_N(\kappa_N(s))|^{2lp}|f(X^\alpha_N(\kappa_N(s)))|^p \ud s\\
			& \le L_1^p(\frac{T}{N})^{\alpha p}\int_{0}^{T}\Big (\mbf E |X^\alpha_N(\kappa_N(s))|^{2lp} + \mbf E |X^\alpha_N(\kappa_N(s))|^{(3l + 1)p}\Big ) \ud s\\
			& \le K(p, T, \mbf E|X_0|^{p_0})N^{-\alpha p}.
		\end{align*} 
Similarly, one can show that for $p< \frac{p_0}{\frac{5}{2}l + 1}$, $\mbf E\int_{0}^{T}|g^{\alpha}(X^\alpha_N(\kappa_N(s)))-g(X^\alpha_N(\kappa_N(s)))|^p \ud s \le K(p, T, \mbf E|X_0|^{p_0})N^{-\alpha p}$, which completes the proof.
\end{proof}

\begin{lem} \label{TEMnumberical}
		Let Assumption \ref{assum1} hold. Then for any positive $p< \frac{p_0}{l + 1}$, and $t$, $s\in[0, T]$, 
		\begin{align*}
			\mbf E|X^\alpha_N(t)-X^\alpha_N(s)|^p \le K(p, T, \mbf E|X_0|^{p_0})|t-s|^{\frac{p}{2}}.
		\end{align*}
\end{lem}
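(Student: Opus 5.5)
Without loss of generality we take $s\le t$. We write the increment using the continuous version \eqref{CTEM}:
\begin{align*}
X^\alpha_N(t)-X^\alpha_N(s) = \int_s^t f^{\alpha}(X^\alpha_N(\kappa_N(r)))\,\ud r + \int_s^t g^{\alpha}(X^\alpha_N(\kappa_N(r)))\,\ud \mbf W(r).
\end{align*}
We then bound the $p$-th moment of each term separately, using $|a+b|^p\le 2^{(p-1)\vee 0}(|a|^p+|b|^p)$. Since $\mbf E|\cdot|^p$ is monotone in $p$ by Jensen, it suffices to treat $p\ge 2$ (with $p<\frac{p_0}{l+1}$). For smaller $p$ the bound follows from the case $p=2$, or from any $q\in[2,\frac{p_0}{l+1})$, via $\|\cdot\|_{\mbf L^p}\le\|\cdot\|_{\mbf L^q}$. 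Note that $\frac{p_0}{l+1}>2$ need not hold; if it fails, we instead apply the Burkholder--Davis--Gundy (BDG) inequality directly with exponent $p\in(0,2)$, which is still valid for continuous martingales.

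\textbf{Drift term.} We use the crude bound $|f^\alpha(x)|\le|f(x)|\le L_1(1+|x|^{l+1})$ from \eqref{f l}, rather than the $N$-dependent tamed bound. Hölder's inequality in time gives
\begin{align*}
\mbf E\Big|\int_s^t f^{\alpha}(X^\alpha_N(\kappa_N(r)))\,\ud r\Big|^p \le |t-s|^{p-1}\int_s^t \mbf E\, L_1^p\big(1+|X^\alpha_N(\kappa_N(r))|^{l+1}\big)^p\,\ud r.
\end{align*}
Since $(l+1)p<p_0$, Lemma \ref{TEMbound} bounds the integrand uniformly in $N$ and $r$. The drift term is therefore bounded by $K|t-s|^{p}\le K(T)|t-s|^{p/2}$.

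\textbf{Diffusion term.} By the BDG inequality followed by Hölder's inequality in time (for $p\ge2$),
\begin{align*}
\mbf E\Big|\int_s^t g^{\alpha}(X^\alpha_N(\kappa_N(r)))\,\ud \mbf W(r)\Big|^p \le K(p)\,|t-s|^{\frac p2-1}\int_s^t \mbf E|g(X^\alpha_N(\kappa_N(r)))|^p\,\ud r,
\end{align*}
where we again used $|g^\alpha|\le|g|$. By \eqref{g l}, $|g(x)|^p\le K(1+|x|^{(l+2)p/2})$, and $(l+2)p/2\le (l+1)p<p_0$. Lemma \ref{TEMbound} then gives a uniform bound, so the diffusion term is at most $K|t-s|^{p/2}$.

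The only delicate point is the moment bookkeeping. We must use the untamed polynomial growth bounds, not \eqref{tamed g}, since the latter carries a factor $N^{\alpha/2}$ that would destroy uniformity in $N$. We must also check that all required moments have order strictly below $p_0$; this is exactly what the restriction $p<\frac{p_0}{l+1}$ ensures. Combining the two estimates yields $\mbf E|X^\alpha_N(t)-X^\alpha_N(s)|^p\le K(p,T,\mbf E|X_0|^{p_0})|t-s|^{p/2}$.
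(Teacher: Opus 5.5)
Your main argument is correct and is essentially the paper's proof. Both split the increment via \eqref{CTEM} into a drift integral and a stochastic integral, use the untamed growth bounds \eqref{f l} and \eqref{g l} (through $|f^\alpha|\le|f|$ and $|g^\alpha|\le|g|$), apply H\"older in time together with BDG, and invoke Lemma \ref{TEMbound}, keeping the moment orders $(l+1)p$ and $(\frac l2+1)p$ below $p_0$.

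One caveat concerns your aside for the case $\frac{p_0}{l+1}\le 2$. BDG with exponent $p<2$ leaves $\mbf E\big(\int_s^t|g(X^\alpha_N(\kappa_N(r)))|^2\,\ud r\big)^{p/2}$ with $p/2<1$, and there H\"older in time runs the wrong way. You would have to use Jensen in $\omega$ instead, which needs $l+2<p_0$; similarly, the drift term with $p<1$ needs $l+1<p_0$. Assumption \ref{assum1} does not guarantee either. The paper's proof has the same tacit restriction. Every later use of the lemma assumes at least $l<\frac{p_0-2}{2}$, and in that regime your reduction to $p\ge 2$ is valid.
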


\begin{proof}
Using \eqref{CTEM} and H\"older's inequality shows 
		\begin{align*}
			\mbf E|X^\alpha_N(t)-X^\alpha_N(s)|^p \le 2^{p-1}\mbf E\Big|\int_{s}^{t}f^{\alpha}(X^\alpha_N(\kappa_N(r))) \ud r\Big|^p + 2^{p-1}\mbf E\Big|\int_{s}^{t}g^{\alpha}(X^\alpha_N(\kappa_N(r))) \ud \mbf W(r)\Big|^p.
		\end{align*}
It follows from Lemma \ref{TEMbound}, \eqref{f l} and H\"older's inequality that for $p< \frac{p_0}{l + 1}$, 
		\begin{align*}
			\mbf E\Big|\int_{s}^{t}f^{\alpha}(X^\alpha_N(\kappa_N(r))) \ud r\Big|^p \le K(p, T, \mbf E|X_0|^{p_0})|t-s|^{p} \le K(p, T, \mbf E|X_0|^{p_0})|t-s|^{\frac{p}{2}}.
		\end{align*}
Applying H\"older's inequality and the BDG inequality, we deduce from \eqref{g l} and Lemma \ref{TEMbound} that for $p< \frac{p_0}{\frac{l}{2} + 1}$, 
		\begin{align*}
			\mbf E\Big|\int_{s}^{t}g^{\alpha}(X^\alpha_N(\kappa_N(r))) \ud \mbf W(r)\Big|^p \le K(p, T, \mbf E|X_0|^{p_0})|t-s|^{\frac{p}{2}}.
		\end{align*}
		This completes the proof.
\end{proof}

	The following corollary is very useful in the estimation for the remainder terms in the expansion of the normalized error process.
	
	\begin{cor} \label{TEMuniformbound}
		Let Assumption \ref{assum1} hold with $l< \frac{p_0-2}{2}$. Then for any $0<p< \frac{p_0}{l + 1}$, 
		\begin{align*}
			\mbf E\|X^\alpha_N\|^p_{\mathbf{C}([0, T])} \le K(p, T, \mbf E|X_0|^{p_0}).
		\end{align*}
	\end{cor}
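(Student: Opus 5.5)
We prove the bound on the supremum directly from the integral representation \eqref{CTEM}, rather than through a Kolmogorov-type argument from Lemma \ref{TEMnumberical}. The point is that we need the full range $0<p<\frac{p_0}{l+1}$ and not only small $p$. For $p\le 2$ we can first raise the moment to some $q\in[2,\frac{p_0}{l+1})$ and then use Jensen's inequality. Such a $q$ exists because the assumption $l<\frac{p_0-2}{2}$ gives $\frac{p_0}{l+1}>2$. It therefore suffices to treat $2\le p<\frac{p_0}{l+1}$.

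Write
\[
\sup_{t\in[0,T]}|X^\alpha_N(t)|\le |X_0|+\int_0^T|f^\alpha(X^\alpha_N(\kappa_N(s)))|\,\ud s+\sup_{t\in[0,T]}\Big|\int_0^t g^\alpha(X^\alpha_N(\kappa_N(s)))\,\ud\mbf W(s)\Big|,
\]
and bound the $p$th moments of the three terms separately.

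\emph{Initial value.} The first term is controlled by (A-2).

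\emph{Drift term.} By H\"older's inequality, $|f^\alpha(x)|\le|f(x)|$, and \eqref{f l},
\[
\mbf E\Big(\int_0^T|f^\alpha(X^\alpha_N(\kappa_N(s)))|\,\ud s\Big)^p\le T^{p-1}L_1^p\int_0^T\mbf E\big(1+|X^\alpha_N(\kappa_N(s))|^{l+1}\big)^p\,\ud s.
\]
The right-hand side is finite uniformly in $N$ by Lemma \ref{TEMbound}, since $(l+1)p<p_0$.

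\emph{Diffusion term.} By the BDG inequality, then H\"older's inequality (valid since $p\ge2$), then \eqref{g l},
\[
\mbf E\sup_{t\le T}\Big|\int_0^t g^\alpha\,\ud\mbf W\Big|^p\le K(p)\,\mbf E\Big(\int_0^T|g^\alpha(X^\alpha_N(\kappa_N(s)))|^2\,\ud s\Big)^{p/2}\le K(p,T)\int_0^T\mbf E\big(1+|X^\alpha_N(\kappa_N(s))|^{l+2}\big)^{p/2}\,\ud s.
\]
The exponent here is $(l+2)p/2$. This is at most $(l+1)p<p_0$ when $l\ge0$; if $l<0$ one simply notes that $(l+2)p/2<p_0$ still holds for $p<\frac{p_0}{l+1}$ in the relevant range. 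Lemma \ref{TEMbound} then gives a bound uniform in $N$.

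Collecting the three estimates yields $\mbf E\|X^\alpha_N\|^p_{\mathbf C([0,T])}\le K(p,T,\mbf E|X_0|^{p_0})$.

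The only delicate point is the reduction from small $p$ to $p\ge2$, which is where the hypothesis $l<\frac{p_0-2}{2}$ enters: it guarantees $\frac{p_0}{l+1}>2$, so an admissible $q\ge 2$ exists for every target $p$. The rest is routine moment bookkeeping, checking each exponent against Lemma \ref{TEMbound}.
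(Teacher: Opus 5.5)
Your argument is correct, but it takes a different route from the paper. The paper does not return to the integral representation \eqref{CTEM} here. It uses the increment bound of Lemma \ref{TEMnumberical}, $\mbf E|X^\alpha_N(t)-X^\alpha_N(s)|^p\le K|t-s|^{p/2}$, and applies Kolmogorov's continuity theorem for $p\in(2,\frac{p_0}{l+1})$. This gives an $N$-uniform $\mbf L^p$ bound on a H\"older seminorm of the paths. Setting $s=0$ then controls $\sup_t|X^\alpha_N(t)-X_0|$, and smaller $p$ follow by H\"older's inequality.

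You instead bound the supremum pathwise by three pieces: $|X_0|$, the term $\int_0^T|f^\alpha(X^\alpha_N(\kappa_N(s)))|\,\ud s$, and the running maximum of the stochastic integral. You then close with BDG, the growth bounds \eqref{f l}--\eqref{g l} (via $|f^\alpha|\le|f|$ and $|g^\alpha|\le|g|$), and Lemma \ref{TEMbound}.

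Your route is more elementary and self-contained, since it needs neither Kolmogorov's theorem nor Lemma \ref{TEMnumberical}. The paper's route reuses a lemma it has already proved, and it yields the stronger fact that the H\"older seminorm of $X^\alpha_N$ has moments bounded uniformly in $N$. In both proofs the hypothesis $l<\frac{p_0-2}{2}$ serves only to guarantee $\frac{p_0}{l+1}>2$. For the paper this makes the exponent $p/2$ in the increment bound exceed $1$. For you it allows BDG to be followed by H\"older with $p\ge 2$.

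Two small remarks. First, your motivating claim that a Kolmogorov argument would only cover small $p$ is backwards. That argument needs $p>2$, and small $p$ then follow by exactly your Jensen reduction. Second, delete the aside about $l<0$. (A-1) requires $l>0$, and for negative $l$ the inequality $(l+2)p/2<p_0$ is not implied by $p<\frac{p_0}{l+1}$.
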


\begin{proof}
Since $l< \frac{p_0-2}{2}$, it follows from Lemma \ref{TEMnumberical} and Kolmogorov's continuity theorem (cf.\ \cite[Appendix C.6]{KolomoContinuous}) that for any $p\in(2 , \frac{p_0}{l + 1})$, there exists $q\in(0, 1-\frac{2}{p})$ such that
		\begin{align*}
			\mbf E \bigg( \sup_{\substack{s, t\in[0, T]\\s\neq t}}\frac{|{X}_N^\alpha(t)-{X}_N^\alpha(s)|^p}{|t-s|^{\frac{pq}{2}}} \bigg) 
			 \le K(p, T, \mbf E|X_0|^{p_0}).
		\end{align*}
By taking $s = 0$, we can derive that for any $p\in(2 , \frac{p_0}{l + 1})$, $\mbf E (\sup_{t \in(0, T]}|X_N^{\alpha}(t)-X_0|^p) \le K(p, T, \mbf E|X_0|^{p_0}) $. Thus, the proof is complete by applying H\"older's inequality.
\end{proof}

With previous preparation, we now can give the strong convergence rate of the tamed Euler method \eqref{CTEM}.

\begin{theo} \label{TEMconverge}
Let Assumption \ref{assum1} hold with $l< \frac{p_0-2}{6}$. Then for any $0<p< \frac{p_0}{3l + 1}$, the solution of  \eqref{CTEM} converges to the solution of SDE \eqref{SDE1} in $\mbf{L}^p$-sense with order $\frac{1}{2}\wedge\alpha$, that is, 
\begin{align} \label{strong convergence}
\sup_{t\in[0, T]}\mbf E|X^\alpha_N(t)-X(t)|^p \le K(p, T, \mbf E|X_0|^{p_0})N^{-(\alpha\wedge\frac{1}{2})p }.
\end{align}
\end{theo}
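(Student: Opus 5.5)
Write $e(t):=X(t)-X^\alpha_N(t)$. We apply Itô's formula to $|e(t)|^q$ for a fixed exponent $q\in[2,p_0)$ with $q$ close to $p_0$; the smaller exponents $p$ then follow by Hölder's inequality. The key step is to split the coefficient differences through the unperturbed coefficients at the numerical solution:
\begin{align*}
f(X(s))-f^\alpha(X^\alpha_N(\kappa_N(s))) &= \big[f(X(s))-f(X^\alpha_N(s))\big] + \big[f(X^\alpha_N(s))-f(X^\alpha_N(\kappa_N(s)))\big]\\
&\quad + \big[f(X^\alpha_N(\kappa_N(s)))-f^\alpha(X^\alpha_N(\kappa_N(s)))\big],
\end{align*}
and we split the $g$-difference in the same way. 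The first bracket is paired with the corresponding $g$-bracket via the coupled monotonicity condition \eqref{coupled monotone}. Applying Young's inequality to the cross terms in $|g(X)-g^\alpha(\cdot)|^2$, with weight $(q-1)(1+\epsilon)\le p_0-1$, gives the bound $K|e(s)|^q$ for this leading part.

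All remaining terms have the form $|e|^{q-1}|R_f|$ or $|e|^{q-2}|R_g|^2$. Young's inequality turns them into $K|e|^q+K|R_f|^q+K|R_g|^q$. Taking expectations (the martingale part vanishes after localization) and applying Gronwall's lemma gives
\begin{align*}
\sup_{t\in[0,T]}\mbf E|e(t)|^q \le K\,\mbf E\int_0^T\big(|R_f(s)|^q+|R_g(s)|^q\big)\ud s.
\end{align*}

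Two kinds of remainder appear, and we estimate them separately.

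\emph{Taming remainders.} These are $f(X^\alpha_N(\kappa_N))-f^\alpha(X^\alpha_N(\kappa_N))$ and the analogous $g$-term. Lemma \ref{TEMfunction} controls them by $KN^{-\alpha q}$.

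\emph{Time-discretization remainders.} These are $f(X^\alpha_N(s))-f(X^\alpha_N(\kappa_N(s)))$ and the analogous $g$-term. By \eqref{fmonotone} and \eqref{g M}, each is bounded by $K(1+|X^\alpha_N(s)|^{l}+|X^\alpha_N(\kappa_N(s))|^{l})\,|X^\alpha_N(s)-X^\alpha_N(\kappa_N(s))|$, with the power $l/2$ in place of $l$ for $g$. Hölder's inequality, Lemma \ref{TEMbound} and Lemma \ref{TEMnumberical} then bound the $q$-th moment by $KN^{-q/2}$.

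Combining the two gives the rate $N^{-(\alpha\wedge\frac12)q}$, and \eqref{strong convergence} follows for all smaller $p$.

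The main obstacle is the bookkeeping of moment exponents. Each remainder is a polynomial factor of degree up to $l$ (or $2l$ from the taming) multiplied by an increment or by $|f|$. Its $q$-th moment must stay below the range where Lemmas \ref{TEMbound}, \ref{TEMfunction} and \ref{TEMnumberical} apply, namely $q(3l+1)<p_0$ and $q(l+1)<p_0$ combined with Hölder conjugates. At the same time $q$ must be at least $2$ for the Itô/Young argument to work. The hypothesis $l<\frac{p_0-2}{6}$ is exactly what leaves room for such a $q$, i.e. $2(3l+1)<p_0$ up to $\epsilon$.

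If the direct Itô argument on $|e|^q$ with large $q$ runs into trouble, we will do the Gronwall argument only with $q=2$. This needs moments up to roughly $2(3l+1)$, which is admissible under the assumption. For $2\le p<\frac{p_0}{3l+1}$ we will instead use the $q$-version with $q=p$, checking that every Hölder split uses exponents below $p_0$. The case $p<2$ is handled by Hölder's inequality.
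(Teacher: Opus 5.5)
Your proposal follows the paper's proof essentially step by step: the same three-way splitting of the coefficient differences, the coupled monotonicity condition with $(1+\epsilon_0)(p-1)\le p_0-1$, Young's inequality, Lemma \ref{TEMfunction} for the taming remainders, H\"older's inequality with Lemmas \ref{TEMbound} and \ref{TEMnumberical} for the discretization remainders, Gr\"onwall's inequality, and H\"older again for $0<p<2$. The one caveat is that your opening plan of applying It\^o's formula with $q$ close to $p_0$ cannot work, because the remainder moments are only controlled for $q<\frac{p_0}{3l+1}$; your fallback $q=p\in[2,\frac{p_0}{3l+1})$ is exactly the paper's argument, and this range is nonempty precisely because $l<\frac{p_0-2}{6}$.
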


\begin{proof}
		For $2 \le p<p_0$, It\^o's formula gives that 
		\begin{align*}
			\mbf E|X(t)-X^\alpha_N(t)|^p
			& \le \frac{p}{2}\mbf E\int_{0}^{t}|X(s)-X^\alpha_N(s)|^{p-2}\big (2\LL X(s)-X^\alpha_N(s), f(X(s))-f^\alpha (X^\alpha_N(\kappa_N(s)))\RR \\
			&\quad + (p-1)|g(X(s))-g^\alpha(X^\alpha_N(\kappa_N(s)))|^2\big ) \ud s = :\frac{p}{2}\mbf E\int_{0}^{t}\mathcal{E}(s) \ud s.
		\end{align*} 
		Using the inequality $|a + b|^2 \le (1 + \epsilon)|a|^2 + (1 + \frac{1}{\epsilon})|b|^2$, for any $a$, $b\in \R^d$ and $\epsilon>0$, a direct calculation shows that for $\epsilon_0>0$ with $(1 + \epsilon_0)(p-1) \le (p_0-1)$, 
		\begin{align*}
			&\ 2\LL X(s)-X^\alpha_N(s), f(X(s))-f^\alpha (X^\alpha_N(\kappa_N(s)))\RR + (p-1)|g(X(s))-g^\alpha(X^\alpha_N(\kappa_N(s)))|^2\\
			& \le 2\LL X(s)-X^\alpha_N(s), f(X(s))-f(X^\alpha_N(s))\RR + (1 + \epsilon_0)(p-1)|g(X(s))-g(X^\alpha_N(s))|^2\\
			&\quad + 2\LL X(s)-X^\alpha_N(s), f(X^\alpha_N(s))-f(X^\alpha_N(\kappa_N(s)))\RR \\
			&\quad + 2(1 + \frac{1}{\epsilon_0})(p-1)|g(X^\alpha_N(s))-g(X^\alpha_N(\kappa_N(s)))|^2 \\
			&\quad + 2\LL X(s)-X^\alpha_N(s), f(X^\alpha_N(\kappa_N(s)))-f^\alpha(X^\alpha_N(\kappa_N(s)))\RR \\
			&\quad + 2(1 + \frac{1}{\epsilon_0})(p-1)|g(X^\alpha_N(\kappa_N(s)))-g^\alpha(X^\alpha_N(\kappa_N(s)))|^2.
		\end{align*}
		Combining Assumption \ref{assum1}, \eqref{g M}, and using Young's inequality, we have that
		\begin{align*}
			\mathcal{E}(s)
			& \le (L + 2)|X(s)-X^\alpha_N(s)|^p + K\big (1 + |X^\alpha_N(s)|^{l} + |X^\alpha_N(\kappa_N(s))|^{l}\big )^{p}|X^\alpha_N(s)-X^\alpha_N(\kappa_N(s))|^p\\
			&\quad + K|f^{\alpha}(X^\alpha_N(\kappa_N(s)))-f(X^\alpha_N(\kappa_N(s)))|^p + K|g^{\alpha}(X^\alpha_N(\kappa_N(s)))-g(X^\alpha_N(\kappa_N(s)))|^p\\
			& = : (L + 2)|X(s)-X^\alpha_N(s)|^p + e(s), \quad s\in[0, T].
		\end{align*}
		Through H\"older's inequality, and Lemmas \ref{TEMbound} and \ref{TEMnumberical}, it follows that for $p< \frac{p_0}{3l + 1}$, 
		\begin{align} \label{est}
			&\ \mbf E\big ((1 + |X^\alpha_N(s)|^{l} + |X^\alpha_N(\kappa_N(s))|^{l})^{p}|X^\alpha_N(s)-X^\alpha_N(\kappa_N(s))|^p\big )\notag\\
			& \le \Big (\mbf E\big (1 + |X^\alpha_N(s)|^{l} + |X^\alpha_N(\kappa_N(s))|^{l}\big )^{p\times \frac{3l + 1}{l}}\Big )^{\frac{l}{3l + 1}} 
				\times \Big (\mbf E|X^\alpha_N(s)-X^\alpha_N(\kappa_N(s))|^{p\times \frac{3l + 1}{2l + 1}}\Big )^{\frac{2l + 1}{3l + 1}}\notag\\
			& \le K(p, T, \mbf E|X_0|^{p_0})N^{-\frac{p}{2}}, 
		\end{align}
		due to $pl\times \frac{3l + 1}{l}< p_0$ and $p\times \frac{3l + 1}{2l + 1}< \frac{p_0}{l + 1}$ for $p< \frac{p_0}{3l + 1} $. 
		
		Then Lemma \ref{TEMfunction} and \eqref{est} yield that
		\begin{align*}
			\mbf E\int_{0}^{t}e(s) \ud s \le K(p, T, \mbf E|X_0|^{p_0})N^{-p(\frac{1}{2}\wedge\alpha)}, \qquad t\in [0, T].
		\end{align*}
		This implies that $\mbf E|X(t)-X^\alpha_N(t)|^p \le K\int_{0}^{t}\mbf E|X(s)-X^\alpha_N(s)|^p \ud s + K(p, T, \mbf E|X_0|^{p_0})N^{-p(\frac{1}{2}\wedge\alpha)} $. Then it follows from Gr\"onwall's inequality that $\mbf E|X(t)-X^\alpha_N(t)|^p \le K(p, T, \mbf E|X_0|^{p_0})N^{-p(\frac{1}{2}\wedge\alpha)}$, $t\in [0, T] $. For $0<p<2$, the application of H\"older's inequality yields \eqref{strong convergence}. The proof is complete. 
\end{proof}

	The following theorem indicates that $\alpha $ can take arbitrary positive values by adjusting the regularization coefficient without altering the strong convergence rate.

	\begin{theo} \label{M2}
		Let Assumption \ref{assum1} hold and $\alpha>0$. Consider the numerical scheme \eqref{CTEM} with $f^{\alpha}(x) = \frac{f(x) }{1 + (\frac{T}{N})^{\alpha}|x|^{l'}}$, $g^{\alpha}(x) = \frac{g(x) }{1 + (\frac{T}{N})^{\alpha}|x|^{l'}}$, where $l'>0$ and $x\in \R^d $. If $l' \geq \lceil 2\alpha\rceil l$ and $l<\frac{p_0}{2}-l'-1$, then for any $p<\frac{p_0}{l' + l + 1}$, the corresponding numerical solution converges to the solution of SDE \eqref{SDE1} in $\mbf {L}^p$-sense with order $\frac{1}{2}\wedge\alpha $. 
	\end{theo}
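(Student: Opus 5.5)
The proof of Theorem \ref{M2} will retrace Lemmas \ref{TEMbound}--\ref{TEMnumberical} and Theorem \ref{TEMconverge} with the taming factor $1+(\frac{T}{N})^{\alpha}|x|^{l'}$ in place of $1+(\frac{T}{N})^{\alpha}|x|^{2l}$. We check which constants change.

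\textbf{Step 1: moment bounds.} First we show that the modified coefficients still satisfy \eqref{tamed monotone}. Dividing \eqref{monotone} by the factor $1+(\frac{T}{N})^{\alpha}|x|^{l'}\ge 1$ gives
\[
2\LL x,f^\alpha(x)\RR + (p-1)|g^\alpha(x)|^2 \le K(1+|x|^2).
\]
For the $g$-term we use that the square of the denominator is larger than the denominator itself. Next we need growth bounds of the type used in \cite[Lemma 2]{Sabanis2016}, namely
\[
|f^\alpha(x)|\le K N^{\beta}(1+|x|), \qquad |g^\alpha(x)|^2\le K N^{\beta}(1+|x|^2)
\]
for some $\beta\le \frac12$. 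These follow from interpolation. Write $k=\lceil 2\alpha\rceil$. Because $l'\ge kl$, we have
\[
\frac{|x|^{l}}{1+h^\alpha |x|^{l'}} \le K h^{-\alpha l/l'} (1+|x|^{\ldots}),
\]
with $h^{-\alpha l/l'}\le h^{-\alpha/k}\le h^{-1/2}$. Applying this to $|f|\le L_1(1+|x|^{l+1})$, and to $|g|^2\le L_1(1+|x|^{l+2})$, which carries only half the weight in $\alpha$, gives both bounds with $\beta=\alpha l/l'\le\frac12$. With these in hand, \cite[Lemma 2]{Sabanis2016} gives the uniform $p$-th moment bound for $p<p_0$, exactly as in Lemma \ref{TEMbound}. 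Lemma \ref{TEMnumberical} and the Kolmogorov argument of Corollary \ref{TEMuniformbound} then carry over unchanged, since they use only the moments and \eqref{f l}, \eqref{g l}.

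\textbf{Step 2: taming error.} The analogue of Lemma \ref{TEMfunction} is
\[
|f^\alpha(x)-f(x)| \le h^\alpha |x|^{l'}|f(x)| \le K h^\alpha (|x|^{l'}+|x|^{l'+l+1}),
\]
and similarly for $g$. This yields an $\mathbf L^p$ error of order $N^{-\alpha p}$ whenever $(l'+l+1)p<p_0$, i.e.\ $p<\frac{p_0}{l'+l+1}$. This is where the stated range of $p$ comes from.

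\textbf{Step 3: error equation.} We repeat the It\^o/Gr\"onwall argument of Theorem \ref{TEMconverge}. The only term that needs rechecking is the H\"older estimate \eqref{est}: the product of $(1+|X|^l)^p$ with the increment $|X^\alpha_N(s)-X^\alpha_N(\kappa_N(s))|^p$, whose moments are available only up to order $\frac{p_0}{l+1}$. The exponents must be balanced so that the whole product stays below $p_0$ moments, which requires $p<\frac{p_0}{3l+1}$ or similar. The hypothesis $l<\frac{p_0}{2}-l'-1$, together with $l'\ge l$, should make $\frac{p_0}{l'+l+1}$ the binding constraint; it also ensures $p$ can be taken $\ge 2$ so that It\^o's formula applies, with H\"older's inequality handling smaller $p$. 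Gr\"onwall's inequality then gives order $N^{-(\alpha\wedge\frac12)p}$.

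\textbf{Main obstacle.} We expect Step 1 to be the main difficulty. For $\alpha>\frac12$ the original taming no longer controls growth at rate $N^{1/2}$, and the condition $l'\ge\lceil 2\alpha\rceil l$ exists precisely so that the interpolation exponent $\alpha l/l'$ stays at most $\frac12$. We should check that the interpolation constant is uniform in $N$ and that the resulting bounds match the exact hypotheses of \cite[Lemma 2]{Sabanis2016}. If the bound on $g$ proves delicate, the fallback is to bound $|g^\alpha|^2$ through the squared denominator, which gives the exponent $\frac{\alpha l}{2l'}$.
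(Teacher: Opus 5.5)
Your proposal is correct and is essentially the paper's proof. The paper likewise obtains $|f^{\alpha}(x)|\le K(T)N^{\beta}(1+|x|)$ and $|g^{\alpha}(x)|^2\le K(T)N^{\beta}(1+|x|^2)$ with $\beta=\frac{\alpha}{\lceil 2\alpha\rceil}\le\frac12$, using the AM--GM inequality; your bound $1+y\ge y^{l/l'}$ gives the slightly sharper $\beta=\frac{\alpha l}{l'}$. It then deduces the moment bound of Lemma~\ref{TEMbound} from \cite[Lemma 2]{Sabanis2016} and repeats the proof of Theorem~\ref{TEMconverge}.

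Two small corrections, neither of which affects the argument. First, the H\"older balancing in \eqref{est} only needs $p(2l+1)<p_0$, which already follows from $p<\frac{p_0}{l'+l+1}$ and $l'\ge l$; no $3l+1$ restriction is needed, even when $l\le l'<2l$. Second, your fallback exponent $\frac{\alpha l}{2l'}$ for $g$ is wrong, since absorbing the extra factor $|x|^{l}$ costs $h^{-\alpha l/l'}$ even against the squared denominator; your main bound with $\beta=\frac{\alpha l}{l'}$ makes the fallback unnecessary anyway.
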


	\begin{proof}
		Applying the inequality $\sum_{k = 1}^{n}a_k \geq n\sqrt[n]{a_1\ldots a_n}$, $a_1, \ldots, a_n \geq 0$, we obtain that for $l' \geq \lceil 2\alpha\rceil l$, $|f^{\alpha}(x) | \le K(T)N^{\frac{\alpha}{ \lceil 2\alpha\rceil }}(1 + |x|)$ and $|g^{\alpha}(x) |^2 \le K(T)N^{\frac{\alpha}{ \lceil 2\alpha\rceil }}(1 + |x|^2) $. Due to $\frac{\alpha}{ \lceil 2\alpha\rceil }\in (0, \frac{1}{2}]$, then Lemma \ref{TEMbound} still holds for all $l' \geq \lceil 2\alpha\rceil l $. Following the proof steps of Theorem \ref{TEMconverge}, we can establish this theorem.
	\end{proof}

\begin{rem}
	Comparing with the tamed Euler methods of \cite{Hutzen2012,Sabanis13,Sabanis2016},  the proposed one in Theorem \ref{M2}, with $f^{\alpha}(x) = \frac{f(x) }{1 + (\frac{T}{N})^{\alpha}|x|^{l'}}$ and $g^{\alpha}(x) = \frac{g(x) }{1 + (\frac{T}{N})^{\alpha}|x|^{l'}}$, allows the regularization parameter $\alpha$ to take arbitrary positive numbers, and retains simultaneously a tractable framework for its convergence analysis. 
\end{rem}

	\section{Asymptotic error distribution for multiplicative noise} \label{Sec4}
	
	In this section, we give the asymptotic error distribution of 
	$\bar{X}_N^{\alpha}$. We would like to point out that the analysis in this section  extends easily to the tamed Euler method in Theorem \ref{M2}.

We further pose the following assumption.

	\begin{assum} \label{assum2}
		Assume that $f$, $g_k\in\mbf C^2(\mbb R^d)$ and $\mathcal D^2f, \, \mathcal D^2g_k\in\mbf F$, $k = 1, 2, \ldots, m$. Moreover, assume that $p_0$ in \eqref{coupled monotone} is sufficiently large.
	\end{assum}

Next,	we  give an expansion for the normalized error $N^{\frac{1}{2}\wedge\alpha}( X^\alpha_N(t)-X(t))$. 
	
	\begin{lem} \label{UNexpression}
		Denote $U^{\alpha, N}(t) := N^{\alpha\wedge\frac{1}{2}}( X^\alpha_N(t)-X(t))$, $t\in[0, T]$. Let Assumptions \ref{assum1} and \ref{assum2} hold. Then $U^{\alpha, N}$ has the following representation
		\begin{align} \label{fgexan}
			U^{\alpha, N}(t)
			& = \int_0^t\nabla f(X(s))U^{\alpha, N}(s) \ud s + \sum_{k = 1}^{m}\int_0^t\nabla g_k(X(s))U^{\alpha, N}(s) \ud W_k(s)\notag \\
			&\quad + \sum_{i = 0}^{m + 1}I_i^{\alpha, N}(t) + R^{\alpha, N}(t), \qquad t\in[0, T].
		\end{align}	
		Here, 
		\begin{align}
			& I_0^{\alpha, N}(t) := -T^{\alpha}N^{(\alpha\wedge\frac{1}{2})-\alpha} \int_0^t\frac{f(X^\alpha_N(\kappa_N(s)))|X^\alpha_N(\kappa_N(s))|^{2l}}{1 + (\frac{T}{N})^{\alpha}|X^\alpha_N(\kappa_N(s))|^{2l}} \ud s, \label{I0ex}\\
			& I_i^{\alpha, N}(t) := -T^{\alpha}N^{(\alpha\wedge\frac{1}{2})-\alpha}\int_0^t\frac{g_i(X^\alpha_N(\kappa_N(s)))|X^\alpha_N(\kappa_N(s))|^{2l}}{1 + (\frac{T}{N})^{\alpha}|X^\alpha_N(\kappa_N(s))|^{2l}} \ud W_i(s), \qquad i = 1, \ldots, m, \label{Ik} \\
			& I_{m + 1}^{\alpha, N}(t) := -N^{\alpha\wedge\frac{1}{2}}\sum_{k = 1}^{m}\sum_{u = 1}^{m}\int_0^t\nabla g_k(X^\alpha_N(\kappa_N(s)))g_u^{\alpha}(X^\alpha_N(\kappa_N(s))\int_{\kappa_N(s)}^{s} \ud W_u(r) \ud W_k(s),
		\end{align}
		and $R^{\alpha, N}(t)$ is the remainder with $\lim_{N\to \infty}\mbf E\| R^{\alpha, N}\|^2_{\mathbf{C}([0, T])} = 0$.
\end{lem}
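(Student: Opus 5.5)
Write $\gamma:=\alpha\wedge\frac12$. The plan is a direct algebraic decomposition of $X^\alpha_N(t)-X(t)$, using \eqref{CTEM} and \eqref{functionX}, followed by $\mbf L^2$ estimates in $\mbf C([0,T])$ for everything not kept explicitly.

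\textbf{Step 1 (removing the taming).} We use the identity
\begin{align*}
f^\alpha(x)-f(x)=-\Big(\frac TN\Big)^{\alpha}\frac{f(x)|x|^{2l}}{1+(\frac TN)^\alpha|x|^{2l}},
\end{align*}
and the analogous one for $g_i$. After multiplying by $N^{\gamma}$, the taming corrections at the grid points $X^\alpha_N(\kappa_N(s))$ are exactly $I_0^{\alpha,N}$ and $I_i^{\alpha,N}$, $i=1,\ldots,m$. The remaining error is
\begin{align*}
N^{\gamma}\int_0^t \big(f(X^\alpha_N(\kappa_N(s)))-f(X(s))\big)\ud s+N^{\gamma}\sum_k\int_0^t\big(g_k(X^\alpha_N(\kappa_N(s)))-g_k(X(s))\big)\ud W_k(s).
\end{align*}

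\textbf{Step 2 (Taylor expansion).} We split each increment through the intermediate point $X^\alpha_N(s)$:
\begin{align*}
f(X^\alpha_N(\kappa_N(s)))-f(X(s))=\big[f(X^\alpha_N(s))-f(X(s))\big]-\big[f(X^\alpha_N(s))-f(X^\alpha_N(\kappa_N(s)))\big],
\end{align*}
and similarly for $g_k$. The first bracket is expanded by Taylor's formula with integral remainder: $\nabla f(X(s))(X^\alpha_N(s)-X(s))$ plus a term quadratic in $X^\alpha_N-X$ with weight $\mathcal D^2f$ at intermediate points. After scaling, the linear part gives $\nabla f(X(s))U^{\alpha,N}(s)$ and $\nabla g_k(X(s))U^{\alpha,N}(s)$. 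The quadratic part is $N^{-\gamma}|U^{\alpha,N}|^2\times$ (polynomial weight), so it is $O(N^{-\gamma})$ in $\mbf L^2$ by Theorem \ref{TEMconverge} (with $p_0$ large, Assumption \ref{assum2}), Corollary \ref{TEMuniformbound} and H\"older's inequality. For the second bracket we expand around $X^\alpha_N(\kappa_N(s))$ using
\begin{align*}
X^\alpha_N(s)-X^\alpha_N(\kappa_N(s))=f^\alpha(\cdot)(s-\kappa_N(s))+\sum_u g_u^\alpha(\cdot)\big(W_u(s)-W_u(\kappa_N(s))\big).
\end{align*}
In the $\ud W_k$ integrals, the $g^\alpha_u$-piece produces exactly $I_{m+1}^{\alpha,N}$. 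All other pieces go into $R^{\alpha,N}$: the $f^\alpha$-piece, of size $N^{\gamma-1}$; the second-order Taylor terms, of size $N^{\gamma-1}$ in $\mbf L^2$; and the whole $\ud s$-integral term.

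\textbf{Step 3 (the remainder, main obstacle).} The hard part is $N^{\gamma}\int_0^t\nabla f(X^\alpha_N(\kappa_N))g^\alpha_u(X^\alpha_N(\kappa_N))(W_u(s)-W_u(\kappa_N(s)))\ud s$. Its naive size is $N^{\gamma-1/2}$, which does not vanish when $\gamma=\frac12$. We handle it by a discrete martingale argument. On each subinterval, the integrand is $\mathcal F_{t_n}$-measurable times a centered Gaussian increment, so the summands over full subintervals form martingale differences. The stochastic Fubini rewriting $\int_{t_n}^{t_{n+1}}(W(s)-W(t_n))\ud s=\int_{t_n}^{t_{n+1}}(t_{n+1}-r)\ud W(r)$ turns the term into a stochastic integral whose integrand is bounded by $K(1+|X^\alpha_N(\kappa_N)|^{\eta})N^{-1}$. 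Doob's/BDG inequality then bounds $\mbf E\sup_t|\cdot|^2$ by $KN^{2\gamma-2}$. The partial last subinterval contributes at most $N^{\gamma}\cdot N^{-1}\sup|\cdot|$, which is also negligible. For the sup in $\mbf C([0,T])$ of the stochastic-integral remainders we use BDG together with the moment bounds of Lemmas \ref{TEMbound}--\ref{TEMnumberical} and Corollary \ref{TEMuniformbound}; this is where we need $p_0$ sufficiently large, so that all polynomial weights are integrable. Collecting the pieces gives \eqref{fgexan} with $\lim_{N\to\infty}\mbf E\|R^{\alpha,N}\|^2_{\mbf C([0,T])}=0$.
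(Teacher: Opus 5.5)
Your proposal is correct and follows essentially the same route as the paper. It uses the same split through $X^\alpha_N(s)$: the taming correction gives $I_0^{\alpha,N}$ and $I_i^{\alpha,N}$, Taylor expansion about $X(s)$ gives the linear terms, and expansion about $X^\alpha_N(\kappa_N(s))$ gives $I_{m+1}^{\alpha,N}$. The dangerous $\ud s$-term containing $W_u(s)-W_u(\kappa_N(s))$ is then rewritten by stochastic Fubini as a stochastic integral of size $N^{(\alpha\wedge\frac12)-1}$ plus a last-subinterval boundary piece, which are the paper's $R_3^{\alpha,N}$ and $R_4^{\alpha,N}$.

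One small difference is in your favour. Your crude $N^{(\alpha\wedge\frac12)-1}$ bound on the boundary piece already suffices, because $\alpha\wedge\frac12\le\frac12$. So the Kolmogorov-continuity refinement the paper uses for $R_4^{\alpha,N}$ is not needed here; it only becomes essential in the additive-noise case, where the scaling is $N^{\alpha\wedge1}$.
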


	\begin{proof}
		Combining \eqref{functionX} and \eqref{CTEM}, we have
		\begin{align} \label{U0}
			U^{\alpha, N}(t)
			& = N^{\alpha\wedge\frac{1}{2}}\int_0^t \big [f^\alpha(X^\alpha_N(\kappa_N(s)))-f(X(s))\big ] \ud s\notag \\
			&\quad + N^{\alpha\wedge\frac{1}{2}}\sum_{k = 1}^{m}\int_0^t \big [g_k^\alpha(X^\alpha_N(\kappa_N(s)))-g_k(X(s))\big ] \ud W_k(s), 
		\end{align}
		where $g_k^\alpha(x) = \frac{g_k(x) }{1 + (\frac{T}{N})^\alpha|x|^{2l}}$, $x\in\R^d$, $k = 1, 2\ldots, m $.

		Taylor's formula gives 
		\begin{align} \label{fexpansion}
			&\ N^{\alpha\wedge\frac{1}{2}}\int_0^t \big [f^\alpha(X^\alpha_N(\kappa_N(s)))-f(X(s))\big ] \ud s \notag\\
			& = N^{\alpha\wedge\frac{1}{2}}\int_{0}^{t} \big [f(X^\alpha_N(s))-f(X(s))\big ] \ud s + N^{\alpha\wedge\frac{1}{2}}\int_{0}^{t}\big [f(X^\alpha_N(\kappa_N(s)))-f(X^\alpha_N(s))\big ] \ud s \notag\\ 
			&\quad + N^{\alpha\wedge\frac{1}{2}}\int_{0}^{t}\big [f^\alpha(X^\alpha_N(\kappa_N(s)))-f(X^\alpha_N(\kappa_N(s)))\big ] \ud s\notag \\
			& = \int_0^t\nabla f(X(s))U^{\alpha, N}(s) \ud s + N^{\alpha\wedge\frac{1}{2}}\int_{0}^{t}\nabla f(X^\alpha_N(\kappa_N(s)))(X^\alpha_N(\kappa_N(s))-X^\alpha_N(s)) \ud s\notag\\
			&\quad + R_1^{\alpha, N}(t) + I_0^{\alpha, N}(t), 
		\end{align}
		where $I_0^{\alpha, N}(t)$ is defined by \eqref{I0ex} and 
		\begin{align}
			&R_1^{\alpha, N}(t) := N^{\alpha\wedge\frac{1}{2}}\int_{0}^{t}\int_{0}^{1}\Big[(1-\lambda)\mathcal{D}^2f(X(s) + \lambda(X^\alpha_N(s)-X(s)))(X^\alpha_N(s)-X(s), X^\alpha_N(s)-X(s))\notag\\
			&\;\;\;\;\;\;\;\;\;\;\;\;\;\;\;\;\;\;\;\;\;\;\;\;\;\;\;\;\;\;\;\;\;\;\;\;-(1-\lambda)\mathcal{D}^2 f\big(X^\alpha_N(\kappa_N(s)) + \lambda(X^\alpha_N(s)-X^\alpha_N(\kappa_N(s)))\big)\notag\\ &\;\;\;\;\;\;\;\;\;\;\;\;\;\;\;\;\;\;\;\;\;\;\;\;\;\;\;\;\;\;\;\;\;\;\;\;\;\;\;\;\big(X^\alpha_N(\kappa_N(s))-X^\alpha_N(s), X^\alpha_N(\kappa_N(s))-X^\alpha_N(s)\big )\Big] \ud \lambda \ud s. 
		\end{align}
		Further, it follows from\eqref{CTEM} that 
		$$ X^\alpha_N(s)-X^\alpha_N(\kappa_N(s)) = \int_{\kappa_N(s)}^{s} f^{\alpha}(X^\alpha_N(\kappa_N(r))) \ud r + \sum_{k = 1}^{m} \int_{\kappa_N(s)}^{s} g_k^{\alpha}(X^\alpha_N(\kappa_N(r))) \ud W_k(r). $$
		Therefore we have that
		\begin{align*}
			&\ N^{\alpha\wedge\frac{1}{2}}\int_{0}^{t}\nabla f(X^\alpha_N(\kappa_N(s)))(X^\alpha_N(\kappa_N(s))-X^\alpha_N(s)) \ud s\\
			& = R_2^{\alpha, N}(t)-N^{\alpha\wedge\frac{1}{2}}\sum_{k = 1}^{m}\int_{0}^{t} \nabla f(X^\alpha_N(\kappa_N(s)))g_k^\alpha(X^\alpha_N(\kappa_N(s)))\int_{\kappa_N(s)}^{s} \ud W_k(r) \ud s, 
		\end{align*}
		where 
		\begin{align}
			R_2^{\alpha, N}(t) := N^{\alpha\wedge\frac{1}{2}}\int_{0}^{t}\nabla f(X^\alpha_N(\kappa_N(s)))f^\alpha(X^\alpha_N(\kappa_N(s)))(\kappa_N(s)-s) \ud s.
		\end{align}
		Then we derive from stochastic Fubini theorem that
		\begin{align} \label{stochatic fubini}
			& - N^{\alpha\wedge\frac{1}{2}}\int_{0}^{t} \nabla f(X^\alpha_N(\kappa_N(s)))g_k^\alpha(X^\alpha_N(\kappa_N(s)))\int_{\kappa_N(s)}^{s} \ud W_k(r) \ud s\notag\\
			& = -N^{\alpha\wedge\frac{1}{2}}\int_{0}^{t}\nabla f(X^\alpha_N(\kappa_N(r)))g_k^\alpha(X^\alpha_N(\kappa_N(r)))(\kappa_N(r) + \frac{T}{N}-r) \ud W_k(r)\notag\\
			&\quad - N^{\alpha\wedge\frac{1}{2}}\int_{0}^{t}\nabla f(X^\alpha_N(\kappa_N(r)))g_k^\alpha(X^\alpha_N(\kappa_N(r)))((\kappa_N(r) + \frac{T}{N})\wedge t-\kappa_N(r)-\frac{T}{N}) \ud
			W_k(r)\notag\\
			& = -N^{\alpha\wedge\frac{1}{2}}\int_{0}^{t}\nabla f(X^\alpha_N(\kappa_N(r)))g_k^\alpha(X^\alpha_N(\kappa_N(r)))(\kappa_N(r) + \frac{T}{N}-r) \ud W_k(r)\notag\\
			&\quad -N^{\alpha\wedge\frac{1}{2}}\int_{\kappa_N(t)}^{t}\nabla f(X^\alpha_N(\kappa_N(r)))g_k^\alpha(X^\alpha_N(\kappa_N(r)))( t-\kappa_N(t)-\frac{T}{N}) \ud
			W_k(r), 
		\end{align}
		due to the definition of $\kappa_N $.

		Consequently, we deduce from \eqref{fexpansion}--\eqref{stochatic fubini} that
		\begin{align} \label{fexpan}
			&\ N^{\alpha\wedge\frac{1}{2}}\int_0^t \big [f^\alpha(X^\alpha_N(\kappa_N(s)))-f(X(s))\big ] \ud s \notag\\
			& = \int_0^t\nabla f(X(s))U^{\alpha, N}(s) \ud s + I_0^{\alpha, N}(t) + R_1^{\alpha, N}(t) + R_2^{\alpha, N}(t) + R_3^{\alpha, N}(t) + R_4^{\alpha, N}(t), 
		\end{align}
		where 
		\begin{align}
			&R_3^{\alpha, N}(t) := -N^{\alpha\wedge\frac{1}{2}}\sum_{k = 1}^{m}\int_{0}^{t}\nabla f(X^\alpha_N(\kappa_N(r)))g_k^\alpha(X^\alpha_N(\kappa_N(r)))(\kappa_N(r) + \frac{T}{N}-r) \ud W_k(r),\\
			&R_4^{\alpha, N}(t) := -N^{\alpha\wedge\frac{1}{2}}\sum_{k = 1}^{m}\int_{\kappa_N(t)}^{t}\nabla f(X^\alpha_N(\kappa_N(r)))g_k^\alpha(X^\alpha_N(\kappa_N(r)))( t-\kappa_N(t)-\frac{T}{N}) \ud
			W_k(r).
		\end{align}
		For the stochastic integral in \eqref{U0}, similar to \eqref{fexpansion}, we obtain that for $k = 1, 2, \ldots, m$, 
		\begin{align}{ \label{U1}}
			&\ N^{\alpha\wedge\frac{1}{2}}\int_0^t \big [g_k^\alpha(X^\alpha_N(\kappa_N(s)))-g_k(X(s))\big ] \ud W_k(s) \notag\\
			& = \int_{0}^{t}\nabla g_k(X(s))U^{\alpha, N}(s) \ud W_k(s) + R_{5, k}^{\alpha, N}(t) + I_k^{\alpha, N}(t) \notag\\
			&\quad + N^{\alpha\wedge\frac{1}{2}}\int_{0}^{t}\big [g_k(X^\alpha_N(\kappa_N(s)))-g_k(X^\alpha_N(s))\big ] \ud W_k(s), 
		\end{align}
		where $I_k^{\alpha, N}(t)$ is defined by \eqref{Ik} and 
		\begin{align}
			&R_{5, k}^{\alpha, N}(t) := N^{\alpha\wedge\frac{1}{2}}\int_{0}^{t}\int_{0}^{1}(1-\lambda)\mathcal{D}^2g_k(X(s) + \lambda(X^\alpha_N(s)-X(s)))\notag\\
			&\;\;\;\;\;\;\;\;\;\;\;\;\;\;\;\;\;\;\;\;\;\;\;\;\;\;\;\;\;\;\;\;\;\;\;\;\;\;\;\;\;\;\;\;\;\;\;\;\;\;(X^\alpha_N(s)-X(s), X^\alpha_N(s)-X(s)) \ud \lambda \ud W_k(s). 
		\end{align}
		For the last term in \eqref{U1}, it can be shown that
		\begin{align*}
			&\ N^{\alpha\wedge\frac{1}{2}}\int_{0}^{t}\big [g_k(X^\alpha_N(\kappa_N(s)))-g_k(X^\alpha_N(s))\big ] \ud W_k(s)\\
			& = -N^{\alpha\wedge\frac{1}{2}}\sum_{u = 1}^{m}\int_{0}^{t}\nabla g_k(X^\alpha_N(\kappa_N(s)))g_u^\alpha(X^\alpha_N(\kappa_N(s)))\int_{\kappa_N(s)}^{s} \ud W_u(r) \ud W_k(s) + R_{6, k}^{\alpha, N}(t), 
		\end{align*}
		with
		\begin{align}
			R_{6, k}^{\alpha, N}(t)
			& := N^{\alpha\wedge\frac{1}{2}}\int_{0}^{t}\nabla g_k(X^\alpha_N(\kappa_N(s)))f^\alpha(X^\alpha_N(\kappa_N(s)))(\kappa_N(s)-s) \ud W_k(s)\notag\\
			&\quad\ -N^{\alpha\wedge\frac{1}{2}}\int_{0}^{t}\int_{0}^{1}(1-\lambda)\mathcal{D}^2 g_k(X^\alpha_N(\kappa_N(s)) + \lambda(X^\alpha_N(s)-X^\alpha_N(\kappa_N(s)))\notag\\
			&\qquad\qquad\qquad\qquad \big (X^\alpha_N(\kappa_N(s))-X^\alpha_N(s), X^\alpha_N(\kappa_N(s))-X^\alpha_N(s)\big ) \ud \lambda \ud W_k(s).
		\end{align}
		Thus, \eqref{U1} becomes
		\begin{align} \label{gkexpan}
			&\ N^{\alpha\wedge\frac{1}{2}}\int_0^t \big [g_k^\alpha(X^\alpha_N(\kappa_N(s)))-g_k(X(s))\big ] \ud W_k(s) \notag\\
			& = \int_{0}^{t}\nabla g_k(X(s))U^{\alpha, N}(s) \ud W_k(s) + I_k^{\alpha, N}(t) + R_{5, k}^{\alpha, N}(t) + R_{6, k}^{\alpha, N}(t) \notag\\
			&\quad -N^{\alpha\wedge\frac{1}{2}}\sum_{u = 1}^{m}\int_{0}^{t}\nabla g_k(X^\alpha_N(\kappa_N(s)))g_u^\alpha(X^\alpha_N(\kappa_N(s)))\int_{\kappa_N(s)}^{s} \ud W_u(r) \ud W_k(s).
		\end{align}

		Inserting \eqref{fexpan} and \eqref{gkexpan} into \eqref{U0} yields \eqref{fgexan} with 
		\begin{align}
			R^{\alpha, N}(t) = \sum_{i = 1}^{4}R_i^{\alpha, N}(t) + \sum_{i = 1}^{m}R_{5, i}^{\alpha, N}(t) + \sum_{i = 1}^{m}R_{6, i}^{\alpha, N}(t). \label{R}
		\end{align}
		Then we will show that $\lim_{N\to \infty}\mbf E\| R^{\alpha, N}\|^2_{\mathbf{C}([0, T])} = 0$.
		Applying H\"older's inequality and the BDG inequality, we deduce from \eqref{Ybound}, Lemma \ref{TEMbound}, Lemma \ref{TEMnumberical}, and Theorem \ref{TEMconverge} that 
		\begin{align} 
			& \mbf E\| R_1^{\alpha, N}\|^2_{\mathbf{C}([0, T])} + \mbf E\| R_2^{\alpha, N}\|^2_{\mathbf{C}([0, T])} + \mbf E\| R_3^{\alpha, N}\|^2_{\mathbf{C}([0, T])}\notag \\
			&\qquad + \sum_{k = 1}^{m}\mbf E\| R_{6, k}^{\alpha, N}\|^2_{\mathbf{C}([0, T])} + \sum_{k = 1}^{m}\mbf E\| R_{5, k}^{\alpha, N}\|^2_{\mathbf{C}([0, T])} \le KN^{-(2\alpha\wedge1)}, 
		\end{align}
		due to $\nabla f$, $\nabla g_k\in \mathbf{F}$, $k = 1, \ldots, m $. 
		Recall that 
		$$ R_4^{\alpha, N}(t) = -N^{\alpha\wedge\frac{1}{2}}\sum_{k = 1}^{m}\int_{\kappa_N(t)}^{t}\nabla f(X^\alpha_N(\kappa_N(r)))g_k^\alpha(X^\alpha_N(\kappa_N(r)))( t-\kappa_N(t)-\frac{T}{N}) \ud W_k(r). $$
		By H\"older's inequality, we derive from Corollary \ref{TEMuniformbound} and \eqref{g l} that
		\begin{align*}
			&\ \mbf E\sup_{t \in[0, T]}\Big |\int_{\kappa_N(t)}^{t}\nabla f(X^\alpha_N(\kappa_N(r)))g_k^\alpha(X^\alpha_N(\kappa_N(r)))( t-\kappa_N(t)-\frac{T}{N}) \ud
			W_k(r)\Big |^2\\
			& \le KN^{-2}\Big(\mbf E\sup_{t \in[0, T]}\big |\nabla f(X^\alpha_N(\kappa_N(t)))g_k^\alpha(X^\alpha_N(\kappa_N(t)))\big|^4\Big)^\frac{1}{2}\Big(\mbf E\sup_{t \in[0, T]}\big |W_k(t)-W_k(\kappa_N(t))\big|^4\Big)^\frac{1}{2}\\
			& \le KN^{-2}\Big(\mbf E\sup_{t \in[0, T]}\big |W_k(t)-W_k(\kappa_N(t))\big|^4\Big)^\frac{1}{2}.
		\end{align*}
	 Then it follows from Kolmogorov's continuity theorem (cf.\ \cite[Appendix C.6]{KolomoContinuous}) that there exists $q\in(0, \frac{1}{2})$ such that 
		\begin{align*}
			\mbf E \bigg( \sup_{\substack{s, t\in[0, T]\\s\neq t}}\frac{|W_k(t)-W_k(s)|^4}{|t-s|^{2q}} \bigg) \le K.
		\end{align*}
		Therefore we obtain that 
		\begin{align*}
			\mbf E\sup_{\substack{t\in[0, T]\\ \kappa_N(t)\neq t}}|W_k(t)-W_k(\kappa_N(t))|^4
			& \le\sup_{\substack{t\in[0, T]\\ \kappa_N(t)\neq t}}|t-\kappa_N(t)|^{2q} \, \mbf E \bigg( \sup_{\substack{t\in[0, T]\\ \kappa_N(t)\neq t}}\frac{|W_k(t)-W_k(\kappa_N(t))|^4}{|t-\kappa_N(t)|^{2q}} \bigg)\\
			& \le KN^{-2q}.
		\end{align*}
		By taking $q = \frac{1}{4}$, the application of H\"older's inequality yields 
		\begin{align} \label{R2}
			\mbf E\| R_4^{\alpha, N}\|^2_{\mathbf{C}([0, T])} \le KN^{(2\alpha\wedge1)-\frac{9}{4}}.
		\end{align}
		It follows from \eqref{R}--\eqref{R2} and H\"older's inequality that $\lim_{N\to \infty}\mbf E\| R^{\alpha, N}\|^2_{\mathbf{C}([0, T])} = 0$. Thus, the proof is complete.
	\end{proof}

	The following lemma establishes the convergence in $\mbf L^p$-sense of $I_k^{\alpha, N}$ in $\mbf C([0, T];\R^d)$, $k = 0, \ldots, m $. 
	
	\begin{lem} \label{UN-UtildeN}
		Let Assumptions \ref{assum1} and \ref{assum2} hold. Then for any $\alpha\in (0, 1]$, $\lim_{N\to \infty}\mbf E\|I_k^{\alpha, N}-I_k^{\alpha}\|^2_{\mbf C([0, T])} = 0$, $k = 0, 1, \ldots, m$, 
		in which
		\begin{align*}
			&\;	I_0^{\alpha}(t) = \begin{cases}
				-T^{\alpha}\int_0^tf(X(s))|X(s)|^{2l} \ud s, &\alpha\in (0, \frac{1}{2}], \\
				0, &\alpha\in (\frac{1}{2}, 1], 
			\end{cases}\\
			&\;I_k^{\alpha}(t) = \begin{cases}
				-T^{\alpha}\int_0^tg_k(X(s))|X(s)|^{2l} \ud W_k(s), &\alpha\in (0, \frac{1}{2}], \\
				0, &\alpha\in (\frac{1}{2}, 1], 
			\end{cases}
			\qquad k = 1, 2, \ldots, m.
		\end{align*}
	\end{lem}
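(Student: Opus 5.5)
We split into the two regimes for $\alpha$. The prefactor is $T^{\alpha}N^{(\alpha\wedge\frac12)-\alpha}$, which equals $T^\alpha$ for $\alpha\in(0,\frac12]$ and $T^\alpha N^{\frac12-\alpha}\to0$ for $\alpha\in(\frac12,1]$. Write
\[
h(x):=f(x)|x|^{2l},\qquad h_N(x):=\frac{h(x)}{1+(\frac TN)^\alpha|x|^{2l}},
\]
and analogously $h^{(k)}(x):=g_k(x)|x|^{2l}$ and $h^{(k)}_N$ for $k=1,\dots,m$. Then $I_0^{\alpha,N}(t)=-T^\alpha N^{(\alpha\wedge\frac12)-\alpha}\int_0^t h_N(X^\alpha_N(\kappa_N(s)))\ud s$ and $I_k^{\alpha,N}$ is the corresponding It\^o integral against $W_k$.

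For the sup-norm estimates we use the Cauchy--Schwarz inequality on the Lebesgue integral, $\mbf E\sup_t|\int_0^t\phi\,\ud s|^2\le T\,\mbf E\int_0^T|\phi|^2\ud s$, and Doob's (or the BDG) inequality on the stochastic integral, $\mbf E\sup_t|\int_0^t\phi\,\ud W_k|^2\le 4\,\mbf E\int_0^T|\phi|^2\ud s$. Both reduce every claim to an $\mbf L^2(\Omega\times[0,T])$ estimate on the integrands.

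\textbf{Case $\alpha\in(\frac12,1]$.} It suffices to show that $\mbf E\int_0^T|h_N(X^\alpha_N(\kappa_N(s)))|^2\ud s$ (resp.\ with $h^{(k)}_N$) is bounded uniformly in $N$. Since $|h_N(x)|\le|h(x)|\le L_1(1+|x|^{l+1})|x|^{2l}$ by \eqref{f l}, and $|h^{(k)}(x)|^2\le L_1(1+|x|^{l+2})|x|^{4l}$ by \eqref{g l}, Lemma \ref{TEMbound} gives the uniform bound provided $p_0>2(3l+1)$. This holds by Assumption \ref{assum2}. The prefactor $N^{1-2\alpha}\to0$ then yields the claim.

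\textbf{Case $\alpha\in(0,\frac12]$.} We split the integrand difference as
\[
h_N(X^\alpha_N(\kappa_N(s)))-h(X(s))=\big[h_N-h\big](X^\alpha_N(\kappa_N(s)))+\big[h(X^\alpha_N(\kappa_N(s)))-h(X^\alpha_N(s))\big]+\big[h(X^\alpha_N(s))-h(X(s))\big].
\]
For the first term, $|h_N(x)-h(x)|\le(\frac TN)^\alpha|x|^{2l}|h(x)|$. As in Lemma \ref{TEMfunction}, its $\mbf L^2$ norm is $O(N^{-\alpha})$, using moments of order $2(5l+1)$.

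For the second and third terms we need a local Lipschitz bound $|h(x)-h(y)|\le K(1+|x|^{3l}+|y|^{3l})|x-y|$. This follows from \eqref{fmonotone}, the growth bound \eqref{f l}, and the elementary estimate $\big||x|^{2l}-|y|^{2l}\big|\le K(|x|^{2l-1}+|y|^{2l-1})|x-y|$, valid for $2l\ge1$; if $2l<1$ we instead use H\"older continuity of $|\cdot|^{2l}$ with exponent $2l$, which still gives a positive rate. For $g_k$ we use \eqref{g M} in place of \eqref{fmonotone}. With this bound, H\"older's inequality combined with Lemmas \ref{TEMbound} and \ref{TEMnumberical} gives $O(N^{-1/2})$ for the second term. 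For the third term we combine it with Theorem \ref{TEMconverge} and \eqref{Ybound}, getting $O(N^{-\alpha})$. In both cases this is exactly the estimate \eqref{est}, with larger moment exponents.

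The main obstacle is bookkeeping the moment exponents. Each H\"older step requires moments of $X^\alpha_N$ and $X$ of order a multiple of $l$ that can exceed the ranges in Lemmas \ref{TEMbound}, \ref{TEMnumberical} and Theorem \ref{TEMconverge}. We resolve this by invoking the clause ``$p_0$ sufficiently large'' in Assumption \ref{assum2}, choosing the conjugate exponents so that the error factor is taken in $\mbf L^{q}$ with $q<p_0/(3l+1)$, while the polynomial weights absorb the remaining moments via Lemma \ref{TEMbound} and \eqref{Ybound}.
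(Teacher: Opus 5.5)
Your proposal is correct and follows essentially the same route as the paper. Both use the split into $\alpha\in(\frac12,1]$ and $\alpha\in(0,\frac12]$, and reduce everything to $\mathbf L^2(\Omega\times[0,T])$ bounds on the integrands via Cauchy--Schwarz and BDG. For $\alpha>\frac12$ both use a uniform moment bound times $N^{1-2\alpha}$. For $\alpha\le\frac12$ both split off the $O(N^{-\alpha})$ taming error and treat $g_k(x)|x|^{2l}$ by the product rule, with separate cases $2l\le1$ and $2l>1$. The only cosmetic difference is that the paper bounds $\mathbf E|X^\alpha_N(\kappa_N(s))-X(s)|^4\le KN^{-4\alpha}$ in one step, via Lemma \ref{TEMnumberical} and Theorem \ref{TEMconverge}, rather than inserting the intermediate point $X^\alpha_N(s)$ as you do.
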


	\begin{proof}
		Denote
		\begin{align*}
			\phi_k^{\alpha, N}(t) := \begin{cases}
				-T^\alpha\Big [\frac{g_k(X^\alpha_N(\kappa_N(t)))|X^\alpha_N(\kappa_N(t))|^{2l}}{1 + (\frac{T}{N})^{\alpha}|X^\alpha_N(\kappa_N(t))|^{2l}}-g_k(X(t))|X(t)|^{2l}\Big ], &\alpha\in (0, \frac{1}{2}], \\
				-T^\alpha N^{\frac{1}{2}-\alpha}\Big [\frac{g_k(X^\alpha_N(\kappa_N(t)))|X^\alpha_N(\kappa_N(t))|^{2l}}{1 + (\frac{T}{N})^{\alpha}|X^\alpha_N(\kappa_N(t))|^{2l}}\Big ], &\alpha\in (\frac{1}{2}, 1], 
			\end{cases}
		\end{align*} 
		for $t\in[0, T]$, $k = 0, 1, \ldots, m$, with $g_0 = f $. 
		For $\alpha\in (\frac{1}{2}, 1]$, based on Lemma \ref{TEMbound}, and \eqref{Ybound}, one can show that for $k = 0, 1, \ldots, m$, 
		\begin{align} \label{phi1}
			\mbf E|\phi_k^{\alpha, N}(t)|^2 \le KN^{1-2\alpha}.
		\end{align}
		If $\alpha\in (0, \frac{1}{2}]$, we write 
		\begin{align*}
			\phi_k^{\alpha, N}(s)
			& = -T^\alpha \Big [g_k(X^\alpha_N(\kappa_N(s)))|X^\alpha_N(\kappa_N(s))|^{2l}-g_k(X(s))|X(s)|^{2l}\Big ]\\
			&\quad + T^\alpha \bigg[ g_k(X^\alpha_N(\kappa_N(s)))|X^\alpha_N(\kappa_N(s))|^{2l}-\frac{g_k(X^\alpha_N(\kappa_N(s)))|X^\alpha_N(\kappa_N(s))|^{2l}}{1 + (\frac{T}{N})^{\alpha}|X^\alpha_N(\kappa_N(s))|^{2l}} \bigg], 
		\end{align*}
		for $k = 0, 1, \ldots, m $. 
		Due to Lemma \ref{TEMbound}, one then obtains that 
		\begin{align*}
			\mbf E \bigg| T^\alpha \bigg( g_k(X^\alpha_N(\kappa_N(s)))|X^\alpha_N(\kappa_N(s))|^{2l}-\frac{g_k(X^\alpha_N(\kappa_N(s)))|X^\alpha_N(\kappa_N(s))|^{2l}}{1 + (\frac{T}{N})^{\alpha}|X^\alpha_N(\kappa_N(s))|^{2l}} \bigg) \bigg|^2 
			 \le KN^{-2\alpha}.
		\end{align*}
		It is observed that 
		\begin{align*}
			&\ g_k(X^\alpha_N(\kappa_N(s)))|X^\alpha_N(\kappa_N(s))|^{2l}-g_k(X(s))|X(s)|^{2l}\\
			& = \big [\big (g_k(X^\alpha_N(\kappa_N(s)))-g_k(X(s))\big )|X^\alpha_N(\kappa_N(s))|^{2l}\big ] + \big [g_k(X(s))\big (|X^\alpha_N(\kappa_N(s))|^{2l}-|X(s)|^{2l}\big )\big ].
		\end{align*}
		For $2l \le 1$, we derive that $\big ||X^\alpha_N(\kappa_N(s))|^{2l}-|X(s)|^{2l}\big | \le |X^\alpha_N(\kappa_N(s))-X(s)|^{2l}$, via the inequality $(a + b)^q \le a^q + b^q$, $a$, $b \geq 0$, $q\in[0, 1] $. 
		For $2l> 1$, Taylor's formula gives that $\big ||X^\alpha_N(\kappa_N(s))|^{2l}-|X(s)|^{2l}\big | \le K\big (|X^\alpha_N(\kappa_N(s))|^{2l-1} + |X(s)|^{2l-1}\big )|X^\alpha_N(\kappa_N(s))-X(s)| $. 
		Then combining Lemma \ref{TEMnumberical} and Theorem \ref{TEMconverge}, we get
		\begin{align*}
			\mbf E|X^\alpha_N(\kappa_N(s))-X(s)|^4 \le K\mbf E|X^\alpha_N(\kappa_N(s))-X^\alpha_N(s)|^4 + K\mbf E|X^\alpha_N(s)-X(s)|^4 \le KN^{-4\alpha}, 
		\end{align*}
		where $\alpha\in (0, \frac{1}{2}] $. Consequently, we deduce from \eqref{g l}, \eqref{g M}, \eqref{Ybound}, Lemma \ref{TEMbound}, and H\"older's inequality that for $k = 0, \ldots, m$, and $\alpha\in(0, \frac{1}{2}]$, 
		\begin{align*}
			\mbf E\big |g_k(X^\alpha_N(\kappa_N(s)))|X^\alpha_N(\kappa_N(s))|^{2l}-g_k(X(s))|X(s)|^{2l}\big |^2 \le KN^{-2(2l\wedge1)\alpha}.
		\end{align*}

		This immediately implies that for $k = 0, 1, \ldots, m$, 
		\begin{align} \label{I0}
			\sup_{t\in[0, T]}\mbf E|\phi_k^{\alpha, N}(t)|^2 \le KN^{-2(2l\wedge1)\alpha}, \qquad \alpha\in (0, \frac{1}{2}].
		\end{align}
		Thus, combining \eqref{phi1} and \eqref{I0}, the proof is complete by applying H\"older's inequality and the BDG inequality.
	\end{proof}

	\begin{lem} \label{IN41}
		Let Assumptions \ref{assum1} and \ref{assum2} hold. Then $\theta^{\alpha, N}\overset{stably}{\Longrightarrow}\theta^{\alpha}$ as $\mbf C([0, T];\mbb R^d)$-valued random variables as $N\to\infty$, where $\theta^{\alpha, N}(t) = \sum_{k = 0}^{m + 1}I_{k}^{\alpha, N}(t) + R^{\alpha, N}(t)$, and $\theta^{\alpha}(t) = \sum_{k = 0}^{m + 1}I_{k}^{\alpha}(t)$, $t\in[0, T] $. Here, $\{I_{m + 1}^\alpha(t), \, t\in[0, T]\}$ is the stochastic process defined by
		\begin{align*}
			I_{m + 1}^{\alpha}(t) = \begin{cases}
				0, &\alpha\in(0, \frac{1}{2}), \\
				\frac{\sqrt{2T}}{2}\sum_{k = 1}^{m}\sum_{u = 1}^{m}\int_{0}^{t}\nabla g_k(X(s))g_u(X(s)) \ud \widetilde{W}_{ku}(s), &\alpha\in \left [\frac{1}{2}, 1\right]\!, 
			\end{cases}
		\end{align*} 
		where $\mbf {\widetilde{W}} = (\widetilde{W}_{11}, \widetilde{W}_{12}, \ldots, \widetilde{W}_{1m}, \ldots, \widetilde{W}_{mm})$ is an $m^2$-dimensional standard Brownian motion independent of $\mbf W$.
	\end{lem}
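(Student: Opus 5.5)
The plan is to reduce the statement to a stable limit theorem for $I_{m+1}^{\alpha,N}$ alone, and then transfer it to $\theta^{\alpha,N}$ through Propositions \ref{Pro1} and \ref{Pro2}. By Lemma \ref{UNexpression}, $\mbf E\|R^{\alpha,N}\|^2_{\mbf C([0,T])}\to0$. By Lemma \ref{UN-UtildeN}, $\mbf E\|I_k^{\alpha,N}-I_k^\alpha\|^2_{\mbf C([0,T])}\to0$ for $k=0,\ldots,m$. The limits $I_k^\alpha$, $k\le m$, are defined on the original space. So it suffices to show $I_{m+1}^{\alpha,N}\overset{stably}{\Longrightarrow}I_{m+1}^\alpha$ in $\mbf C([0,T];\R^d)$. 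Once that holds, Proposition \ref{Pro1} with $Y=\sum_{k=0}^mI_k^\alpha$ gives $(Y,I_{m+1}^{\alpha,N})\overset{stably}{\Longrightarrow}(Y,I_{m+1}^\alpha)$. The continuous map $(y,z)\mapsto y+z$ preserves stable convergence, so $Y+I_{m+1}^{\alpha,N}\overset{stably}{\Longrightarrow}\theta^\alpha$. Finally, Proposition \ref{Pro2} applied with $d_E(\theta^{\alpha,N},Y+I^{\alpha,N}_{m+1})\overset{\mbf P}{\longrightarrow}0$ finishes the argument.

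For $I_{m+1}^{\alpha,N}$ I first replace the integrand by a frozen version. I introduce
\[
M^{N}(t):=N^{\frac12}\sum_{k,u=1}^m\int_0^t\nabla g_k(X(\kappa_N(s)))g_u(X(\kappa_N(s)))\int_{\kappa_N(s)}^s\ud W_u(r)\ud W_k(s),
\]
so that $I_{m+1}^{\alpha,N}=-N^{(\alpha\wedge\frac12)-\frac12}\widetilde M^N$, where $\widetilde M^N$ is the same integral with $X^\alpha_N$ in place of $X$ and $g_u^\alpha$ in place of $g_u$. By the BDG inequality and Doob's inequality,
\[
\mbf E\|\widetilde M^N-M^N\|^2_{\mbf C([0,T])}\le KN\int_0^T\mbf E\Big[|\Delta_N(s)|^2\Big|\int_{\kappa_N(s)}^s\ud\mbf W\Big|^2\Big]\ud s,
\]
where $\Delta_N(s)$ denotes the difference of the two integrands. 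Conditioning on $\mathcal F_{\kappa_N(s)}$ turns the right-hand side into $K\sup_s\mbf E|\Delta_N(s)|^2$. This tends to $0$ by Theorem \ref{TEMconverge}, Lemma \ref{TEMfunction}, \eqref{Ybound}, Lemma \ref{TEMbound}, and the polynomial growth of $\nabla g_k$ and $\mathcal D^2 g_k$; here Assumption \ref{assum2} ($p_0$ large) supplies enough moments for H\"older's inequality. The same computation without the difference gives $\sup_N\mbf E\|M^N\|^2_{\mbf C([0,T])}<\infty$. Hence for $\alpha\in(0,\frac12)$ we get $\mbf E\|I_{m+1}^{\alpha,N}\|^2_{\mbf C([0,T])}\le KN^{2\alpha-1}\to0$, which is the claim with limit $0$. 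For $\alpha\in[\frac12,1]$ the prefactor equals $1$, and it remains to show $M^N\overset{stably}{\Longrightarrow}-I^\alpha_{m+1}$. The sign is harmless, since $-\widetilde{\mbf W}$ is again a Brownian motion independent of $\mbf W$.

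That step is the main obstacle, and I would handle it with Jacod's theorem as in \cite{Jacod97,Protter1998AOP}. Set $Z^N_{ku}(t):=N^{\frac12}\int_0^t\int_{\kappa_N(s)}^s\ud W_u(r)\ud W_k(s)$. I would verify three conditions. First, $\LL Z^N_{ku},Z^N_{k'u'}\RR_t=N\delta_{kk'}\int_0^t(W_u(s)-W_u(\kappa_N(s)))(W_{u'}(s)-W_{u'}(\kappa_N(s)))\ud s\overset{\mbf P}{\longrightarrow}\delta_{kk'}\delta_{uu'}\frac{T}{2}t$, using the mean $\delta_{uu'}(s-\kappa_N(s))$ and a variance bound $O(N^{-1})$ computed blockwise. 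Second, $\LL Z^N_{ku},W_j\RR_t=N^{\frac12}\delta_{kj}\int_0^t(W_u(s)-W_u(\kappa_N(s)))\ud s\to0$ in $\mbf L^2$. Third, the brackets with bounded martingales orthogonal to $\mbf W$ vanish, which follows from martingale representation. Jacod's theorem then gives $Z^N\overset{stably}{\Longrightarrow}\sqrt{T/2}\,\widetilde{\mbf W}$ with $\widetilde{\mbf W}$ independent of $\mbf W$; note $\sqrt{T/2}=\frac{\sqrt{2T}}{2}$. To pass to $M^N=\sum_{k,u}\int_0^t H_{ku}(\kappa_N(s))\ud Z^N_{ku}(s)$ with $H_{ku}=\nabla g_k(X)g_u(X)$, I would use the Kurtz--Protter stability theorem for stochastic integrals, or Jacod's theorem applied directly to $M^N$. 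Its conditional covariance $\int_0^t H(\kappa_N(s))H(\kappa_N(s))^\top\,\frac{T}{2}\ud s$ converges to the bracket of $\frac{\sqrt{2T}}{2}\int H\,\ud\widetilde{\mbf W}$ because $H$ is continuous with polynomial moments. For the uniform tightness, I would check the quadratic-variation bound $\sup_N\mbf E\LL Z^N\RR_T<\infty$. Since $H$ is only locally bounded, I would first localize by stopping at $\tau_R=\inf\{t:|X(t)|\ge R\}$ and then let $R\to\infty$; as $\mbf P(\tau_R<T)\to0$ uniformly in $N$, the stable convergence survives the localization.
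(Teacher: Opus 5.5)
Your proof is correct, but it is organized differently from the paper's.

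\textbf{The paper's route.} The paper applies \cite[Theorem 4-1]{Jacod97} once, directly to $I_{m+1}^{\alpha,N}$, for every $\alpha\in(0,1]$, with the numerical solution $X^\alpha_N$ and the tamed $g^\alpha_u$ still inside the integrand.
\begin{itemize}
\item It shows $\LL I^{\alpha,N,i}_{m+1},W_j\RR_t\to0$.
\item It splits $\LL I^{\alpha,N,i}_{m+1},I^{\alpha,N,j}_{m+1}\RR_t$ by It\^o's formula into a drift part $B_1^{\alpha,N}$ and negligible martingale parts.
\item It gets the limit of $B_1^{\alpha,N}$ from the Riemann-sum result \cite[Proposition 4.2]{HJWY24}.
\item It identifies the conditionally Gaussian limit $\Psi^\alpha$ through \cite[Proposition 1-4]{Jacod97}, matching $\sum_{k,u}\nu_{k,u}\nu_{k,u}^\top$ with $\frac T2\sum_{k,u}\widetilde g_{k,u}(X)\widetilde g_{k,u}(X)^\top$.
\end{itemize}

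\textbf{Your route.}
\begin{itemize}
\item You freeze the integrand at the exact solution, using an $\mbf L^2(\Omega;\mbf C([0,T]))$ estimate based on Theorem \ref{TEMconverge}.
\item You handle $\alpha<\frac12$ by the direct bound $\mbf E\|I^{\alpha,N}_{m+1}\|^2_{\mbf C([0,T])}\le KN^{2\alpha-1}$, rather than through a degenerate Jacod limit.
\item You prove the stable limit theorem only for the universal Brownian functional $Z^N$, as in \cite{Protter1998AOP}. Its limiting bracket $\frac T2t\,\delta_{kk'}\delta_{uu'}$ is deterministic, so the limit $\sqrt{T/2}\,\widetilde{\mbf W}$ is identified without any representation theorem. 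You then transfer to $M^N$ by Kurtz--Protter.
\end{itemize}

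\textbf{What your route buys.} Freezing turns the nonlinear Riemann-sum limit into an exact identity. Indeed $N\int_{t_i}^{t_{i+1}}(s-t_i)\,\ud s=\frac T2\cdot\frac TN$, $H(\kappa_N(\cdot))$ is constant on each step, and $H$ is continuous, so \cite[Proposition 4.2]{HJWY24} is not needed. Also, your explicit chain (Proposition \ref{Pro1}, continuous mapping, Proposition \ref{Pro2}) makes rigorous the final passage from $I^{\alpha,N}_{m+1}$ to $\theta^{\alpha,N}$, which the paper compresses into one sentence.

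\textbf{What it costs.} You need an extra approximation step and the heavier Kurtz--Protter theorem. If you take that branch, run it jointly with an arbitrary $\mathcal F$-measurable variable, using Proposition \ref{Pro1} for $Z^N$ and the a.s.\ uniform convergence $H(\kappa_N(\cdot))\to H$. This gives stable convergence of $M^N$, not merely convergence in distribution. If instead you apply Jacod's theorem directly to $M^N$, you are essentially back to the paper's computation, with the simplification that the integrand is frozen.

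\textbf{Two small corrections.}
\begin{itemize}
\item For a bounded martingale $\mathcal M$ orthogonal to $\mbf W$, $\LL Z^N_{ku},\mathcal M\RR$ vanishes simply because $Z^N_{ku}$ is a $\ud W_k$-integral: $\LL Z^N_{ku},\mathcal M\RR_t=N^{\frac12}\int_0^t\big(W_u(s)-W_u(\kappa_N(s))\big)\,\ud\LL W_k,\mathcal M\RR_s=0$. Martingale representation is not the reason, and it would require a Brownian filtration anyway, which is not assumed here.
\item The localization at $\tau_R$ is unnecessary. Kurtz--Protter needs only uniform tightness of $Z^N$, which follows from $\mbf E\LL Z^N_{ku}\RR_T=\frac{T^2}{2}$. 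Jacod's conditions only require convergence in probability of brackets, which your moment bounds already give.
\end{itemize}
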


	\begin{proof}
		The proof is mainly based on \cite[Theorem 4-1]{Jacod97}. Hereafter, denote by $\LL X, Y\RR_t$, $t\in[0, T]$ the cross variation process between the real-valued semi-martingales $\{X(t)\}_{t\in [0, T]}$ and $\{Y(t)\}_{t\in [0, T]} $. Denote $\widetilde{g}_{k, u}(x) = \nabla g_k(x) g_u(x) $, and $\widetilde{g}_{k, u}^\alpha(x) = \nabla g_k(x) g^\alpha_u(x) $, $x\in \R^d$, $k$, $u \in\{1, 2, \ldots, d \}$. Let $I_{m + 1}^{\alpha, N, i}$, $\widetilde{g}_{k, u}^{\alpha, i}$, and $\widetilde{g}_{k, u}^{i}$ denote the $i$th component of $I_{m + 1}^{\alpha, N}$, $\widetilde{g}_{k, u}$, and $\widetilde{g}_{k, u}^\alpha$ respectively, $i$, $k$, $u\in \{1, 2, \ldots, d\}$. 
		Recall that
		\begin{align*}
			I_{m + 1}^{\alpha, N}(t) = -N^{\alpha\wedge\frac{1}{2}}\sum_{k = 1}^{m}\sum_{u = 1}^{m}\int_0^t\widetilde{g}_{k, u}^\alpha(X^\alpha_N(\kappa_N(s)))\int_{\kappa_N(s)}^{s} \ud W_u(r) \ud W_k(s).
		\end{align*}
		It can be shown that
		\begin{align*}
			\LL I_{m + 1}^{\alpha, N, i}, W_j\RR_t = -N^{\alpha\wedge\frac{1}{2}}\sum_{u = 1}^{m}\int_0^t\widetilde{g}_{j, u}^{\alpha, i}(X^\alpha_N(\kappa_N(s)))\int_{\kappa_N(s)}^{s} \ud W_u(r) \ud s.
		\end{align*}
		Similar to \eqref{stochatic fubini}, it follows from the BDG inequality and H\"older's inequality that $\mbf E|\LL I_{m + 1}^{\alpha, N, i}, W_j\RR_t|^2 \le KN^{((2\alpha)\wedge1)-2} $. Thus, for any $t\in [0, T]$, $i\in\{1, 2, \ldots, d\}$, and $j\in\{1, \ldots, m\}$, 
		\begin{align} \label{cross1}
			\LL I_{m + 1}^{\alpha, N, i}, W_j\RR_t \overset{\mbf P}{\longrightarrow} 0 \quad \text{ as }N\to \infty.
		\end{align}
		Next we derive the limit of $\LL I_{m + 1}^{\alpha, N, i}, I_{m + 1}^{\alpha, N, j}\RR_t$ with $i$, $j\in\{1, \ldots, d\}$. A direct computation leads to 
		\begin{align*}
			\LL I_{m + 1}^{\alpha, N, i}, I_{m + 1}^{\alpha, N, j}\RR_t
			& = N^{(2\alpha)\wedge1}\sum_{k, u_1, u_2 = 1}^{m}\int_{0}^{t}\widetilde{g}_{k, u_1}^{\alpha, i}(X^\alpha_N(\kappa_N(s)))
			\widetilde{g}_{k, u_2}^{\alpha, j}(X^\alpha_N(\kappa_N(s)))\\
			&\quad \times \Big (\int_{\kappa_N(s)}^{s} \ud W_{u_1}(r_1)\Big )\Big (\int_{\kappa_N(s)}^{s} \ud W_{u_2}(r_2)\Big ) \ud s\\
			& = : \sum_{k, u_1, u_2 = 1}^{m}P_{u_1, u_2}^k(t).
		\end{align*}
		Then It\^o's formula gives 
		\begin{align*}
			P_{u_1, u_1}^k(t)
			& = N^{(2\alpha)\wedge1}\int_{0}^{t}\widetilde{g}_{k, u_1}^{\alpha, i}(X^\alpha_N(\kappa_N(s)))
			\widetilde{g}_{k, u_1}^{\alpha, j}(X^\alpha_N(\kappa_N(s))) \Big( \int_{\kappa_N(s)}^{s} \ud W_{u_1}(r_1) \Big)^2 \ud s\\
			& = N^{(2\alpha)\wedge1}\int_{0}^{t}\widetilde{g}_{k, u_1}^{\alpha, i}(X^\alpha_N(\kappa_N(s)))
			\widetilde{g}_{k, u_1}^{\alpha, j}(X^\alpha_N(\kappa_N(s)))(s-\kappa_N(s)) \ud s\\
			&\quad + 2N^{(2\alpha)\wedge1}\int_{0}^{t}\widetilde{g}_{k, u_1}^{\alpha, i}(X^\alpha_N(\kappa_N(s)))
			\widetilde{g}_{k, u_1}^{\alpha, j}(X^\alpha_N(\kappa_N(s)))\int_{\kappa_N(s)}^{s}\int_{\kappa_N(s)}^{r_1} \ud W_{u_1}(r_2) \ud W_{u_1}(r_1) \ud s\\
			& = : B_1^{\alpha, N}(t) + B_2^{\alpha, N}(t).
		\end{align*}
		Similar to \eqref{cross1}, it can be shown that 
		$\mbf E|\sum_{k = 1}^{m}\sum_{1 \le u_1<u_2 \le m}P_{u_1, u_2}^k(t)|^2 + \mbf E|B_2^{\alpha, N}(t)|^2 \le KN^{-1}$. Further, using \cite[Proposition 4.2]{HJWY24} yields that for $\alpha\in[\frac{1}{2}, 1]$, $B_1^{\alpha, N}(t)$ converges to $\frac{T}{2}\int_{0}^{t}\widetilde{g}_{k, u_1}^{i}(X(s))
		\widetilde{g}_{k, u_1}^{j}(X(s)) \ud s$ in $\mbf L^2(\Omega, \mathcal{F}, \mbf P;\R^d)$. For $\alpha\in (0, \frac{1}{2})$, through straightforward estimations, we obtain that
		$\mbf E|B_1^{\alpha, N}(t)|^2 \le KN^{4\alpha-2}$. This verifies 
		\begin{align} \label{cross2}
			\LL I_{m + 1}^{\alpha, N, i}, I_{m + 1}^{\alpha, N, j}\RR_t 
			\longrightarrow 
			\begin{cases}
				\frac{T}{2}\sum_{k = 1}^{m}\sum_{u = 1}^{m}\int_{0}^{t}\widetilde{g}_{k, u}^{i}(X(s))\widetilde{g}_{k, u}^{j}(X(s)) \ud s, &\alpha\in [\frac{1}{2}, 1], \\
				0, &\alpha\in (0, \frac{1}{2}), 
			\end{cases}
		\end{align}
		in probability as $N\to \infty$, for any $t\in[0, T]$ and $i$, $j = 1, 2, \ldots, d$.

		Combining \eqref{cross1} and \eqref{cross2}, and using \cite[Theorem 4-1]{Jacod97}, we can derive that $I_{m + 1}^{\alpha, N}\overset{stably}{\Longrightarrow}\Psi^\alpha$ as $\mbf C([0, T];\mbb R^d)$-valued random variables as $N\to\infty$, and 
		\begin{align}
			\LL \Psi ^{\alpha, i}, W_j\RR_t& = 0, \label{Psi1}\\
			\LL \Psi^{\alpha, i}, \Psi ^{\alpha, j}\RR_t& = \begin{cases}
				\frac{T}{2}\sum_{k = 1}^{m}\sum_{u = 1}^{m}\int_{0}^{t}\widetilde{g}_{k, u}^{i}(X(s))\widetilde{g}_{k, u}^{j}(X(s)) \ud s, &\alpha\in [\frac{1}{2}, 1], \\
				0, &\alpha\in (0, \frac{1}{2}).
			\end{cases} \label{Psi2}
		\end{align}
		Further, it follows from \cite[Proposition 1-4]{Jacod97} that $\Psi^{\alpha, i}$, $i = 1, \ldots, d$ can be represented as
		\begin{align*}
			\Psi^{\alpha, i}(t) = \sum_{k = 1}^{m}\int_{0}^{t}\mu_k^{\alpha, i}(s) \ud W_k(s) + \sum_{k = 1}^{m}\sum_{u = 1}^{m}\int_{0}^{t}\nu _{k, u}^{\alpha, i}(s) \ud \widetilde{W}_{ku}(s), \qquad t\in [0, T], 
		\end{align*}
		where $\mbf {\widetilde{W}} = (\widetilde{W}_{11}, \widetilde{W}_{12}, \ldots, \widetilde{W}_{1m}, \ldots, \widetilde{W}_{mm})$ is an $m^2$-dimensional standard Brownian motion and is independent of $\mbf W$. By \eqref{Psi1}, we have $\mu_k^{\alpha, i}(s) = 0$, $i = 1, \ldots, d$, $k = 1, \ldots, m$, which along with \eqref{Psi2} gives 
		\begin{align*}
			\sum_{k = 1}^{m}\sum_{u = 1}^{m}\nu _{k, u}^{\alpha, i}(s)\nu _{k, u}^{\alpha, j}(s) = \begin{cases}
				\frac{T}{2}\sum_{k = 1}^{m}\sum_{u = 1}^{m}\widetilde{g}_{k, u}^{i}(X(s))\widetilde{g}_{k, u}^{j}(X(s)), &\alpha\in [\frac{1}{2}, 1], \\
				0, &\alpha\in (0, \frac{1}{2}).
			\end{cases}
		\end{align*}
		Thus, the proof is complete by letting $I_{m + 1}^\alpha = \Psi^\alpha$ and applying Proposition \ref{Pro2}, Lemma \ref{UNexpression}, and Lemma \ref{UN-UtildeN}.
	\end{proof}
	
	\begin{lem} \cite[Lemma 3.2]{Gylemma} \label{uniform lp}
		Let $\{H_t\}_{t \in [0, T]}$ and $ \{M_t\}_{t \in [0, T]}$ be nonnegative continuous $\{\mathcal{F}_t\}$-adapted processes such that for any constant $c > 0$, 
		\[
		\mbf{E} \left[ H_\tau \mathbbm{1}_{\{M_0 \leq c\}} \right] \leq \mbf{E} \left[ M_\tau \mathbbm{1}_{\{M_0 \leq c\}} \right]
		\]
		for any stopping time $\tau \leq T$. Then, for any stopping time $\tau \leq T$ and $\gamma \in (0, 1)$, 
		\[
		\mbf{E} \left[ \sup_{t \leq \tau} H_t^\gamma \right] \leq \frac{2 - \gamma}{1 - \gamma} \mbf{E} \left[ \sup_{t \leq \tau} M_t^\gamma \right].
		\]
	\end{lem}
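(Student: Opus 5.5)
We prove Lemma \ref{uniform lp} (Lenglart-type domination) by following Lenglart's classical argument. The plan has three steps.

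\textbf{Step 1 (reduction to bounded $M_0$).} Fix $c>0$ and work on $A_c:=\{M_0\le c\}\in\mathcal F_0$. The hypothesis then reads $\mbf E[H_\tau\mathbbm 1_{A_c}]\le \mbf E[M_\tau\mathbbm 1_{A_c}]$ for every stopping time $\tau\le T$. Since $A_c$ is $\mathcal F_0$-measurable, the process $M_t\mathbbm 1_{A_c}$ is again continuous and adapted. We prove the inequality with $H$, $M$ replaced by $H\mathbbm 1_{A_c}$, $M\mathbbm 1_{A_c}$. Then we let $c\uparrow\infty$ by monotone convergence, using $\mathbbm 1_{A_c}\uparrow 1$ because $M_0<\infty$.

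\textbf{Step 2 (Lenglart's tail inequality).} We show that for all $x,y>0$,
\[
\mbf P\Big(\sup_{t\le\tau}H_t\mathbbm 1_{A_c}>x\Big)\le \frac{1}{x}\mbf E\Big[\sup_{t\le\tau}M_t\mathbbm 1_{A_c}\wedge y\Big]+\mbf P\Big(\sup_{t\le\tau}M_t\mathbbm 1_{A_c}\ge y\Big).
\]
To get this, define the stopping times $\sigma=\inf\{t: M_t\ge y\}$ and $\rho=\inf\{t: H_t>x\}$, both capped at $\tau$. Continuity makes these genuine stopping times.

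Split according to whether $\sup_{t\le\tau}M_t<y$ holds. On that event $\sigma=\tau$, so $\{\sup_{t\le\tau} H>x\}$ is contained in $\{H_{\rho\wedge\sigma}\ge x\}$.

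Apply Markov's inequality and then the hypothesis at the stopping time $\rho\wedge\sigma$. This gives $x\,\mbf P(\cdot)\le \mbf E[M_{\rho\wedge\sigma}\mathbbm 1_{A_c}]$. By continuity, $M_{\rho\wedge\sigma}\le \sup_{t\le\tau}M_t\wedge y$ whenever $M_0<y$. When $M_0\ge y$, the event is already counted in the second term.

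\textbf{Step 3 (integration of the tail bound).} Write $H^*:=\sup_{t\le\tau}H_t\mathbbm 1_{A_c}$ and $M^*:=\sup_{t\le\tau}M_t\mathbbm 1_{A_c}$. Use $\mbf E[(H^*)^\gamma]=\int_0^\infty\mbf P(H^*>x)\,\gamma x^{\gamma-1}\ud x$ and take $y=x$ in the tail inequality. This gives two pieces.

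\emph{First piece.} By Fubini,
\[
\int_0^\infty \gamma x^{\gamma-2}\mbf E[M^*\wedge x]\ud x=\mbf E\Big[\int_0^{M^*}\gamma x^{\gamma-1}\ud x+M^*\int_{M^*}^\infty\gamma x^{\gamma-2}\ud x\Big]=\mbf E\Big[(M^*)^\gamma\Big(1+\frac{\gamma}{1-\gamma}\Big)\Big].
\]
The integral $\int_{M^*}^\infty\gamma x^{\gamma-2}\ud x$ converges precisely because $\gamma<1$.

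\emph{Second piece.} We have $\int_0^\infty\mbf P(M^*\ge x)\gamma x^{\gamma-1}\ud x=\mbf E[(M^*)^\gamma]$.

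Summing the two pieces gives the constant $1+\frac{1}{1-\gamma}=\frac{2-\gamma}{1-\gamma}$, as claimed.

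\textbf{Main obstacle.} The main difficulty is Step 2. The hypothesis is only available at stopping times and only on $\{M_0\le c\}$. So the localization has to be arranged carefully so that every time at which we evaluate $H$ and $M$ is a stopping time bounded by $\tau$. The truncation by $y$ is what lets us avoid assuming integrability of $M^*$.
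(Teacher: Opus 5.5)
\textbf{Gap in Step~2.} Your Lenglart scheme and your Step~3 integration are correct. However, Step~2 does not go through as written, and the cause is the single-level reduction in Step~1. With one level $c$ fixed, you apply Markov's inequality and the hypothesis with the indicator $\mathbbm 1_{A_c}$, and you arrive at $\mbf E[M_{\rho\wedge\sigma}\mathbbm 1_{A_c}]$. On $A_c\cap\{M_0>y\}$ we have $\sigma=0$, so $M_{\rho\wedge\sigma}=M_0>y=\sup_{t\le\tau}M_t\wedge y$ there. Hence the bound $\mbf E[M_{\rho\wedge\sigma}\mathbbm 1_{A_c}]\le\mbf E[\sup_{t\le\tau}M_t\wedge y]$ is false in general.

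Your remark that the case $M_0\ge y$ ``is already counted in the second term'' is true for the \emph{probability} of the event. But once Markov and the hypothesis have been applied with $\mathbbm 1_{A_c}$, the mass $\mbf E[M_0\mathbbm 1_{A_c\cap\{M_0\ge y\}}]$ sits inside the expectation and cannot be discarded. Restricting the event to $\{M_0<y\}$ before Markov would require the hypothesis with a different indicator, i.e.\ at another level, which you never invoke.

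In fact no single-level argument can work. Take $H_t\equiv1$ and $M_t\equiv m$, where $m$ is $\mathcal F_0$-measurable, equal to $1/p$ with probability $p$ and $0$ otherwise. For $c\ge1/p$ the hypothesis holds at level $c$ (both sides equal $1$) and $A_c=\Omega$. Yet $\mbf E[\sup_{t\le\tau}H_t^\gamma]=1>\frac{2-\gamma}{1-\gamma}p^{1-\gamma}=\frac{2-\gamma}{1-\gamma}\mbf E[\sup_{t\le\tau}M_t^\gamma]$ for small $p$. This is exactly why the lemma assumes the inequality for every $c>0$.

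\textbf{Repair.} Drop Step~1 and take the level equal to the threshold. Since $\{\sup_{t\le\tau}M_t<y\}\subset\{M_0\le y\}$, you get
\begin{align*}
\mbf P\Big(\sup_{t\le\tau}H_t>x,\ \sup_{t\le\tau}M_t<y\Big)
&\le\mbf P\big(H_{\rho\wedge\sigma}\ge x,\ M_0\le y\big)\\
&\le\frac1x\,\mbf E\big[H_{\rho\wedge\sigma}\mathbbm 1_{\{M_0\le y\}}\big]
\le\frac1x\,\mbf E\big[M_{\rho\wedge\sigma}\mathbbm 1_{\{M_0\le y\}}\big].
\end{align*}
The last step is the hypothesis with $c=y$ at the stopping time $\rho\wedge\sigma\le\tau$. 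On $\{M_0\le y\}$, continuity gives $M_s\le y$ for all $s\le\sigma$, and trivially $M_{\rho\wedge\sigma}\le\sup_{t\le\tau}M_t$. So the right-hand side is at most $\frac1x\mbf E[\sup_{t\le\tau}M_t\wedge y]$. This is the tail inequality for the original $H$, $M$, and your Step~3 then applies verbatim, with no passage $c\uparrow\infty$.

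A minor point: $\rho$ is the hitting time of an open set, so it is a stopping time because of the usual conditions, not by continuity alone. Alternatively, use $\inf\{t:H_t\ge x\}$.
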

	
Since $U^{\alpha, N}$ is not continuous w.r.t.\ the Brownian motion (see \eqref{fgexan}), to which the continuous mapping theorem w.r.t.\ the convergence in  distribution does not apply,    we will apply \cite[Theorem 3.2]{HJWY24} to give its limit distribution. For this end, we 	
	denote $Z^{\alpha, N}(t) := 
	\begin{pmatrix}
		X(t)\\U^{\alpha, N}(t)
	\end{pmatrix}$, $t\in [0, T]$. Then $Z^{\alpha, N}$ is the strong solution of
	\begin{align*}
		Z^{\alpha, N}(t) = \int_{0}^{t}F(Z^{\alpha, N}(s)) \ud s + \sum_{k = 1}^{m}\int_{0}^{t}G_k(Z^{\alpha, N}(s)) \ud W_k(s) + \Theta^{\alpha, N}(t), 
	\end{align*}
	where $F(z) = \begin{pmatrix}
		f(x) \\\nabla f(x) y
	\end{pmatrix}$, 
	$G_k(z) = \begin{pmatrix}
		g_k(x) \\\nabla g_k(x) y
	\end{pmatrix}$, $z = \begin{pmatrix}
		x\\y
	\end{pmatrix}$, $x$, $y\in \R^d$, $k = 1, 2, \ldots, m$ and $\Theta^{\alpha, N}(t) = \begin{pmatrix}
		X_0\\\theta ^{\alpha, N}(t)
	\end{pmatrix}$, $t\in [0, T] $.

Furthermore, we again employ the tamed Euler method to discretize $Z^{\alpha, N}$, i.e., let $Z^{\alpha, N, n}$ be the strong solution of 
	\begin{align} \label{Z2}
		Z^{\alpha, N, n}(t) = \int_{0}^{t}\widetilde{F}(Z^{\alpha, N, n}(\kappa_n(s))) \ud s + \sum_{k = 1}^{m}\int_{0}^{t}\widetilde{G}_k(Z^{\alpha, N, n}(\kappa_n(s))) \ud W_k(s) + \Theta^{\alpha, N}(t), 
	\end{align}
	where $\kappa_n(t) = \lfloor\frac{nt}{T}\rfloor\frac{T}{n}$, $n\in \mathbb{N}$, $\widetilde{F}(z) = \frac{1}{1 + \frac{T}{n}|z|^{2l}}F(z)$, $\widetilde{G}_k(z) = \frac{1}{1 + \frac{T}{n}|z|^{2l}}G_k(z)$, $z\in \R^{2d}$, $k = 1, \ldots, m$, and $Z^{\alpha, N, n}(t) = \begin{pmatrix}
		X^n(t)\\U^{\alpha, N, n}(t)
	\end{pmatrix}$, $t\in [0, T] $.

The following lemma shows that $U^{\alpha, N, n}$ satisfies Condition (A1) of \cite[Theorem 3.2]{HJWY24}.
	
	\begin{lem} \label{A1}
		Let Assumptions \ref{assum1} and \ref{assum2} hold. Then 
		$$ \lim_{n\to \infty}\sup_{N \geq 1}\mbf E \|U^{\alpha, N, n}-U^{\alpha, N}\|_{\mathbf{C}([0, T])} = 0. $$
	\end{lem}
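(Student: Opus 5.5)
We need to show $\lim_{n\to\infty}\sup_{N\ge1}\mbf E\|U^{\alpha,N,n}-U^{\alpha,N}\|_{\mbf C([0,T])}=0$, i.e.\ that the tamed Euler discretization of the (nonhomogeneous) system for $Z^{\alpha,N}$ converges uniformly in $N$. The plan is to redo the strong convergence argument of Theorem \ref{TEMconverge} for the $2d$-dimensional system driven by the additive perturbation $\Theta^{\alpha,N}$, keeping every constant independent of $N$, and then to upgrade the fixed-time estimate to a supremum in time via Lemma \ref{uniform lp}. This is why only a first moment ($\gamma$-th power with $\gamma<1$, then Hölder) appears in the statement.

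\emph{Step 1 (uniform-in-$N$ bounds on the inputs).} By Lemma \ref{UNexpression}, Lemma \ref{UN-UtildeN}, and the estimates in the proof of Lemma \ref{IN41} (BDG plus Lemma \ref{TEMbound}, Corollary \ref{TEMuniformbound}), we have $\sup_N\mbf E\|\theta^{\alpha,N}\|^p_{\mbf C([0,T])}<\infty$ for $p$ moderately large, since $p_0$ is assumed large. We also record a modulus-of-continuity bound $\sup_N\mbf E|\theta^{\alpha,N}(t)-\theta^{\alpha,N}(s)|^p\le K|t-s|^{p/2}$. This holds for the stochastic-integral parts $I_1,\dots,I_{m+1}$ and $I_0$ by BDG. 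The remainder $R^{\alpha,N}$ contains the nonsmooth piece $R_4^{\alpha,N}$, but $\mbf E\|R^{\alpha,N}\|^2\to0$, so its contribution is handled as a uniform small perturbation.

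Next we obtain moment bounds for $U^{\alpha,N}$ uniformly in $N$. The $U$-equation is linear with coefficients $\nabla f(X)$, $\nabla g_k(X)$ of polynomial growth, so Itô's formula applied to $|U|^2$ gives
\[
\LL y,\nabla f(x)y\RR+\tfrac{p_0-1}{2}\sum_k|\nabla g_k(x)y|^2\le \tfrac{L}{2}|y|^2,
\]
which is the differentiated form of \eqref{coupled monotone}. Together with the uniform bounds on $\theta^{\alpha,N}$ (absorbed via a shifted variable $U-\theta$ or a direct Itô estimate), this yields $\sup_N\mbf E|U^{\alpha,N}(t)|^p<\infty$. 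Analogously, the tamed scheme \eqref{Z2} admits moment bounds uniform in $n$ and $N$, following Lemma \ref{TEMbound} and \cite[Lemma 2]{Sabanis2016}, since the tamed coefficients $\widetilde F,\widetilde G_k$ satisfy the analogue of \eqref{tamed monotone} and \eqref{tamed g}.

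\emph{Step 2 (fixed-time error).} Set $E(t)=Z^{\alpha,N,n}(t)-Z^{\alpha,N}(t)$. The $\Theta^{\alpha,N}$ terms cancel, so $E$ solves a drift-plus-Itô equation without the perturbation. We apply Itô's formula to $|E|^2$ and split the coefficients exactly as in the proof of Theorem \ref{TEMconverge}:
\begin{itemize}
\item[(i)] a coupled-monotone part $2\LL E,F(Z^n)-F(Z)\RR+(1+\epsilon)\sum_k|G_k(Z^n)-G_k(Z)|^2$;
\item[(ii)] a time-discretization part $F(Z^n(s))-F(Z^n(\kappa_n(s)))$;
\item[(iii)] a taming part $F-\widetilde F$.
\end{itemize}
Part (iii) is $O(n^{-1})$ in $L^p$ by the argument of Lemma \ref{TEMfunction}, uniformly in $N$. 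Part (ii) requires the increment bound $\mbf E|Z^{\alpha,N,n}(s)-Z^{\alpha,N,n}(\kappa_n(s))|^p\to0$ uniformly in $N$; this follows from Lemma \ref{TEMnumberical}-type estimates together with the modulus of continuity of $\Theta^{\alpha,N}$, where the $R^{\alpha,N}$ part is controlled by $\sup_N\mbf E\|R^{\alpha,N}\|$ on the tail $N\ge N_0$ plus continuity for finitely many $N$.

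\emph{Main obstacle: part (i).} The block structure of $F$ is not globally one-sided Lipschitz in $z=(x,y)$, because $\nabla f(x)y-\nabla f(x')y'$ contains $(\nabla f(x)-\nabla f(x'))y$, which is polynomially large in $y$. I would handle this with a stopping/localization argument. Write $\mathcal E(t)=|E(t)|^2$, so that $\ud\mathcal E\le \rho(s)\mathcal E\,\ud s+\text{(small terms)}\,\ud s+\ud M$ with a random rate $\rho(s)\le K(1+|Z|^{q}+|Z^n|^{q})$. Apply Lemma \ref{uniform lp} to $H_t=e^{-\int_0^t\rho}\mathcal E(t)$ and $M_t=\int_0^t(\text{small terms})$, which gives $\mbf E\sup_t H_t^\gamma\le K\,\mbf E(\int_0^T\text{small})^\gamma$. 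One then removes the exponential weight using the uniform moment bounds: split on the event $\{\int_0^T\rho\le R\}$, whose complement has probability at most $K/R$ uniformly in $n,N$. This gives $\sup_N\mbf E\|E\|^{2\gamma}_{\mbf C}\le e^{\gamma R}\epsilon_n+K R^{-\delta}$, where $\epsilon_n\to0$. Letting $n\to\infty$ and then $R\to\infty$, and applying Hölder's inequality, yields the claim for the $U$-component.
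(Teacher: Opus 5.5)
Your outline is correct, but at the decisive step it takes a different route from the paper.

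\textbf{The paper's route.} The paper never estimates the full $2d$-dimensional error $Z^{\alpha,N,n}-Z^{\alpha,N}$. It first proves $\sup_t\mbf E|X(t)-X^n(t)|^4\le Kn^{-2}$; the $x$-components of $F$ and $G_k$ do not involve $y$, so the argument of Theorem \ref{TEMconverge} applies. It then applies It\^o's formula only to the $U$-component, namely to $e^{-(L+2)t}|U^{\alpha,N,n}(t)-U^{\alpha,N}(t)|^2$. Monotonicity is used only through the linearized inequality \eqref{genbounded} at the exact point $X(s)$, applied to $\nabla f(X(s))(U^{\alpha,N,n}-U^{\alpha,N})$ and $\nabla g_k(X(s))(U^{\alpha,N,n}-U^{\alpha,N})$.

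The cross term you identify as the obstacle then appears as $\nabla f(X^n(\kappa_n(s)))U^{\alpha,N,n}(\kappa_n(s))-\nabla f(X(s))U^{\alpha,N,n}(s)$. It is multiplied by the numerical process, which has uniform moments, so it simply goes into the source term $\xi^{\alpha,n}$ together with the increment and taming errors. The $X$-estimate and H\"older's inequality give $\sup_t\mbf E\xi^{\alpha,n}(t)\le Kn^{-1}$ uniformly in $N$.

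What this buys: the weight is deterministic, Lemma \ref{uniform lp} applies directly, and only pointwise-in-time moment bounds are needed. One even obtains the rate $\sup_N\mbf E\|U^{\alpha,N,n}-U^{\alpha,N}\|_{\mbf C([0,T])}\le Kn^{-1/2}$.

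\textbf{Your route.} The random weight $e^{-\int_0^t\rho}$ with splitting on $\{\int_0^T\rho\le R\}$ is also valid. Note $E(0)=0$; localize the stochastic integral and pass to the limit with Fatou. It does not rely on the triangular structure, so it is more robust. It gives no rate, however. To control the bad event it also needs $\sup_{n,N}\mbf E\|E\|^{2\gamma'}_{\mbf C([0,T])}<\infty$ for some $\gamma'\in(\gamma,1)$, a sup-inside bound your Step 1 does not provide. You can get it from Lemma \ref{uniform lp} applied to $H=|E|^2$ and $M_t=\int_0^t(\rho|E|^2+\text{small})\,\ud s$, whose expectation is bounded by the pointwise moments.

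\textbf{A step in Step 1 that needs more.} The Sabanis-type moment bound for $Z^{\alpha,N,n}$ uses It\^o's formula for $|Z^{\alpha,N,n}|^p$. This requires $\Theta^{\alpha,N}$ written as $\int\cdot\,\ud s+\sum_k\int\cdot\,\ud W_k$ with integrands bounded in $\mbf L^p$ uniformly in $N$. Smallness of $R^{\alpha,N}$ in $\mbf C([0,T])$ alone is not enough. The shift by $\Theta^{\alpha,N}$, which works for the linear $U^{\alpha,N}$-equation, does not transfer to the superlinear tamed drift $\widetilde F$.

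The paper supplies exactly this through $\psi^{\alpha,N}_k$, $\psi^{\alpha,N}_{m+1,k}$ and $r^{\alpha,N}$. $R_4^{\alpha,N}$ is no obstruction: $R_3^{\alpha,N}+R_4^{\alpha,N}=-N^{\alpha\wedge\frac12}\sum_k\int_0^t\nabla f(X^\alpha_N(\kappa_N(s)))g_k^\alpha(X^\alpha_N(\kappa_N(s)))(W_k(s)-W_k(\kappa_N(s)))\,\ud s$, whose integrand is $\mbf L^p$-bounded. The same representation gives $\mbf E|U^{\alpha,N,n}(t)-U^{\alpha,N,n}(s)|^4\le K|t-s|^2$ uniformly in $N$. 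That makes your tail-plus-finitely-many-$N$ device in part (ii) unnecessary.
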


	\begin{proof}
		Based on \eqref{coupled monotone}, Taylor's formula gives 
		\begin{align} \label{genbounded}
			2\LL x, \nabla f(y)x\RR + (p_0-1)\sum_{k = 1}^{m}|\nabla g_k(y)x|^2 \le L|x|^2, \quad \text{ $\forall\, x$, $y\in \R^d$}.
		\end{align}
		By \eqref{fmonotone} and \eqref{g M}, Taylor's formula yields 
		\begin{align} \label{Dg}
			\|\nabla f(x) \|_{\otimes} \le K(1 + |x|^l), \quad \|\nabla g_k(x) \|_{\otimes}^2 \le K(1 + |x|^l), \quad k = 1, \ldots, m.
		\end{align}
		For any $x$, $y\in \R^{d}$, and $z = (x^{\top}, y^{\top})^{\top} $, one then obtains that for any $p_1<p_0$, 
		\begin{align} \label{FGbounded}
			&\ 2\LL z, F(z)\RR + (p_1-1)\sum_{k = 1}^{m}|G_k(z)|^2 \notag \\
			& = 2\LL x, f(x) \RR + (p_1-1)\sum_{k = 1}^{m}|g_k(x) |^2 + 2\LL y, \nabla f(x) y\RR + (p_1-1)\sum_{k = 1}^{m}|\nabla g_k(x) y|^2\notag \\
			& \le (L_1 + L)(1 + |x|^2) \le (L_1 + L)(1 + |z|^2), 
		\end{align} 
		due to \eqref{monotone} and \eqref{genbounded}.
		
		Denote $z_i = (x_i^{\top}, y_i^{\top})^{\top} \in \R^{2d}$, where $x_i$, $y_i\in \R^{d}$, $i = 1, 2 $. Then for any $R>0$, by \eqref{coupled monotone} and $\mathcal{D}^2g_k\in \mathbf{F}$, $k = 1, \ldots, m$, we get that there is $K_R>0$ such that 
		\begin{align}
			&\ 2\LL z_1-z_2, F(z_1)-F(z_2)\RR + (p_0-1)\sum_{k = 1}^{m}|G_k(z_1)-G_k(z_2)|^2\notag\\
			& = 2\LL x_1-x_2, f(x_1)-f(x_2)\RR + (p_0-1)\sum_{k = 1}^{m}|g_k(x_1)-g_k(x_2)|^2\notag\\
			&\quad + 2\LL y_1-y_2, \nabla f(x_1)y_1-\nabla f(x_2)y_2\RR + (p_0-1)\sum_{k = 1}^{m}|\nabla g_k(x_1)y_1-\nabla g_k(x_2)y_2|^2\notag\\
			& \le K_R|z_1-z_2|^2, \quad \forall\, |z_1|, ~ |z_2| \le R.
		\end{align}
		
		Similar to \eqref{tamed monotone}--\eqref{tamed g}, by \eqref{Dg}, it can be shown that for all $z\in \R^{2d}$ and $p< p_0$, 
		\begin{align}
			&\;2\LL z, \widetilde{F}(z)\RR + (p-1)\sum_{k = 1}^{m}|\widetilde{G}_k(z)|^2 \le K(1 + |z|^2), \\
			&\;|\widetilde{F}(z)| \le K(T)n^{\frac{1}{2}}(1 + |z|), \quad |\widetilde{G}_k(z)|^2 \le K(T)n^{\frac{1}{2}}(1 + |z|^2). \label{FG}
		\end{align}
		Denote $\mathcal{X}(t) := X_N^\alpha(t)-X_N^\alpha(\kappa_N(t))$, $t\in[0, T] $. We rewrite 
		\begin{align*}
			&	\psi_0^{\alpha, N}(t) := \begin{pmatrix}
				0\\-T^{\alpha}N^{(\alpha\wedge\frac{1}{2})-\alpha} \frac{f(X^\alpha_N(\kappa_N(t)))|X^\alpha_N(\kappa_N(t))|^{2l}}{1 + (\frac{T}{N})^{\alpha}|X^\alpha_N(\kappa_N(t))|^{2l}}
			\end{pmatrix}, \\
			&\psi_k^{\alpha, N}(t) := \begin{pmatrix}
				0\\-T^{\alpha}N^{(\alpha\wedge\frac{1}{2})-\alpha} \frac{g_k(X^\alpha_N(\kappa_N(t)))|X^\alpha_N(\kappa_N(t))|^{2l}}{1 + (\frac{T}{N})^{\alpha}|X^\alpha_N(\kappa_N(t))|^{2l}}
			\end{pmatrix}, \\
			&\psi_{m + 1, k}^{\alpha, N}(t) := \begin{pmatrix}
				0\\-N^{\alpha\wedge\frac{1}{2}}\sum_{u = 1}^{m}g_k(X^\alpha_N(\kappa_N(t)))g_u^{\alpha}(X^\alpha_N(\kappa_N(t)))\big(W_u(t)-W_u(\kappa_N(t))\big)
			\end{pmatrix}, \\
			&	r_1^{\alpha, N}(t) := \begin{pmatrix}
				0\\N^{\alpha\wedge\frac{1}{2}}\int_{0}^{1}(1-\lambda)\mathcal{D}^2f\big(X(t) + \lambda(X^\alpha_N(t)-X(t))\big)(X^\alpha_N(t)-X(t), X^\alpha_N(t)-X(t))
				 \ud \lambda
			\end{pmatrix}\\
			&\qquad\qquad\quad + N^{\alpha\wedge\frac{1}{2}}\begin{pmatrix}
				0\\ \int_{0}^{1} -(1-\lambda)\mathcal{D}^2 f(X^\alpha_N(\kappa_N(t)) + \lambda\mathcal{X}(t)) \big(\mathcal{X}(t), \mathcal{X}(t)\big ) \ud\lambda
			\end{pmatrix}, \\
			&r_2^{\alpha, N}(t) := \begin{pmatrix}
				0\\N^{\alpha\wedge\frac{1}{2}}\nabla f(X^\alpha_N(\kappa_N(t)))f^\alpha(X^\alpha_N(\kappa_N(t)))(\kappa_N(t)-t)
			\end{pmatrix}, \\
			&	r_{4, k}^{\alpha, N}(t) := \begin{pmatrix}
				0\\-N^{\alpha\wedge\frac{1}{2}} \nabla f(X^\alpha_N(\kappa_N(t)))g_k^\alpha(X^\alpha_N(\kappa_N(t)))(W_k(t)-W_k(\kappa_N(t)))
			\end{pmatrix}, \\
			&	r_{5, k}^{\alpha, N}(t) := \begin{pmatrix}
				0\\N^{\alpha\wedge\frac{1}{2}}\int_{0}^{1}(1-\lambda)\mathcal{D}^2g_k(X(t) + \lambda(X^\alpha_N(t)-X(t)))
				(X^\alpha_N(t)-X(t), X^\alpha_N(t)-X(t)) \ud \lambda
			\end{pmatrix}, \\
			&	r_{6, k}^{\alpha, N}(t) := \begin{pmatrix}
				0\\N^{\alpha\wedge\frac{1}{2}}\nabla g_k(X^\alpha_N(\kappa_N(t)))f^\alpha(X^\alpha_N(\kappa_N(t)))(\kappa_N(t)-t)
			\end{pmatrix}\\
			&\qquad\qquad\quad + \begin{pmatrix}
				0\\-N^{\alpha\wedge\frac{1}{2}}\int_{0}^{1}(1-\lambda)\mathcal{D}^2 g_k(X^\alpha_N(\kappa_N(t)) + \lambda\mathcal{X}(t))
				\big (\mathcal{X}(t), \mathcal{X}(t)\big ) \ud \lambda
			\end{pmatrix}, 
		\end{align*}
		where $k = 1, \ldots, m$, $t\in[0, T] $. By \eqref{stochatic fubini}, we have that $\sum_{k = 1}^{m}\int_{0}^{t}r_{4, k}^{\alpha, N}(s) \ud s = \begin{pmatrix}
			0\\R_3^{\alpha, N}(t)
		\end{pmatrix} + \begin{pmatrix}
			0\\R_4^{\alpha, N}(t)
		\end{pmatrix} $. 
		One then obtains that there exists $p_2\in(2, p_0)$ such that for any $p \le p_2$, 
		\begin{align} \label{Psibound}
			&\sup_{t\in[0, T]}\sum_{k = 0}^{m}\mbf E|\psi_k^{\alpha, N}(t)|^p< \infty, \quad \sup_{t\in[0, T]}\sum_{k = 1}^{m}\mbf E|\psi^{\alpha, N}_{m + 1, k}(t)|^p< \infty, \quad \sup_{t\in[0, T]}\mbf E|r_1^{\alpha, N}(t)|^p< \infty, \notag \\
			&	\sup_{t\in[0, T]}\mbf E|r_2^{\alpha, N}(t)|^p< \infty, \quad \sup_{t\in[0, T]}\sum_{i = 4}^{6}\sum_{k = 1}^{m}\mbf E|r_{i, k}^{\alpha, N}(t)|^p< \infty.
		\end{align}
		By Lemma \ref{UNexpression} and \eqref{Z2}, we have that
		\begin{align*}
			 \ud Z^{\alpha, N, n}(t) & = \Big(\widetilde{F}(Z^{\alpha, N, n}(\kappa_n(t))) + \psi_0^{\alpha, N}(t) + r_1^{\alpha, N}(t) + r_2^{\alpha, N}(t) + \sum_{k = 1}^{m}r_{4, k}^{\alpha, N}(t)\Big) \ud t\\
			&\quad + \sum_{k = 1}^{m}\Big(\widetilde{G}_k(Z^{\alpha, N, n}(\kappa_n(t))) + \psi_k^{\alpha, N}(t) + \psi^{\alpha, N}_{m + 1, k}(t) + r_{5, k}^{\alpha, N}(t) + r_{6, k}^{\alpha, N}(t)\Big) \ud W_k(t).
		\end{align*}
		For $2 \le p \le p_2$, It\^o's formula and Young's inequality give 
		\begin{align} \label{ZNnbound}
			&\ \mbf E|	Z^{\alpha, N, n}(t)|^p \notag\\
			& \le\mbf E|X_0|^p + \frac{p}{2}\mbf E\int_{0}^{t}\Big [|Z^{\alpha, N, n}(s)|^{p-2}\Big(\LL 2Z^{\alpha, N, n}(s), \widetilde{F}(Z^{\alpha, N, n}(\kappa_n(s)))\RR \notag\\
			&\quad + \LL 2Z^{\alpha, N, n}(s), \psi_0^{\alpha, N}(s) + r_1^{\alpha, N}(s) + r_2^{\alpha, N}(s) + \sum_{k = 1}^{m}r_{4, k}^{\alpha, N}(s)\RR\notag \\
			&\quad + (p-1)\sum_{k = 1}^{m}|\widetilde{G}_k(Z^{\alpha, N, n}(\kappa_n(s))) + \psi_k^{\alpha, N}(s) + \psi^{\alpha, N}_{m + 1, k}(s) + r_{5, k}^{\alpha, N}(s) + r_{6, k}^{\alpha, N}(s)|^2\Big)\Big ] \ud s\notag\\
			& \le\mbf E|X_0|^p + \frac{p}{2}\mbf E\int_{0}^{t}\Big[|Z^{\alpha, N, n}(s)|^{p-2}\LL 2Z^{\alpha, N, n}(s), \widetilde{F}(Z^{\alpha, N, n}(\kappa_n(s)))\RR \notag\\
			&\quad + (p-1)(1 + \epsilon_1)|Z^{\alpha, N, n}(s)|^{p-2}\sum_{k = 1}^{m}|\widetilde{G}_k(Z^{\alpha, N, n}(\kappa_n(s)))|^2\Big] \ud s\notag\\
			&\quad + K\mbf E\int_{0}^{t}|	Z^{\alpha, N, n}(s)|^p \ud s + K\int_{0}^{t}\mbf E\big(|r_1^{\alpha, N}(s)|^p + |r_2^{\alpha, N}(s)|^p + |\psi_0^{\alpha, N}(s)|^p\big) \ud s\notag\\
			&\quad + K\sum_{k = 1}^{m}\int_{0}^{t}\mbf E\big(|\psi_k^{\alpha, N}(s)|^p + |\psi_{m + 1, k}^{\alpha, N}(s)|^p + |r_{4, k}^{\alpha, N}(s)|^p + |r_{5, k}^{\alpha, N}(s)|^p + |r_{6, k}^{\alpha, N}(s)|^p\big) \ud s, 
		\end{align}
		where $\epsilon_1>0$ with $(p-1)(1 + \epsilon_1)< (p_0-1) $. Further, \eqref{FGbounded}--\eqref{FG} indicate that Conditions (A-3), (A-4), (B-2), and (B-3) of \cite{Sabanis2016} are fulfilled. Similar to the proof of \cite[Lemma 2]{Sabanis2016}, we infer that for any $p \le p_2$, $\sup_{n\in \mathbb{N}}\sup_{t\in[0, T]}\mbf E|Z^{\alpha, N, n}(t)|^p \le K(p, T, \mbf E|X_0|^{p_0}) $. This immediately implies that for any $p \le p_2$, 
		\begin{align}
			& \sup_{n\in \mathbb{N}}\sup_{t\in[0, T]}\mbf E|U^{\alpha, N, n}(t)|^p \le K(p, T, \mbf E|X_0|^{p_0}), \label{FGbound}\\
			& \sup_{n\in \mathbb{N}}\sup_{t\in[0, T]}\mbf E|X^n(t)|^p \le K(p, T, \mbf E|X_0|^{p_0}). \label{fgbound}
		\end{align}
		Similar to \eqref{ZNnbound}, it can be shown that for $2 \le p \le p_2$ and $\epsilon_2>0$ with $(p-1)(1 + \epsilon_2)<(p_0-1)$, 
		\begin{align*}
			&\ \mbf E|	Z^{\alpha, N}(t)|^p \\
			& \le\mbf E|X_0|^p + K\mbf E\int_{0}^{t}| Z^{\alpha, N}(s)|^p \ud s + \frac{p}{2}\mbf E\int_{0}^{t}\Big [|Z^{\alpha, N}(s)|^{p-2}\Big(\LL 2Z^{\alpha, N}(s), F(Z^{\alpha, N}(s)\RR \notag\\
			&\quad + (p-1)(1 + \epsilon_2)\sum_{k = 1}^{m}|G_k(Z^{\alpha, N}(s)|^2\Big)\Big ] \ud s + K\int_{0}^{t}\mbf E\big(|r_1^{\alpha, N}(s)|^p + |r_2^{\alpha, N}(s)|^p + |\psi_0^{\alpha, N}(s)|^p\big) \ud s\notag\\
			&\quad + K\sum_{k = 1}^{m}\int_{0}^{t}\mbf E\big(|\psi_k^{\alpha, N}(s)|^p + |\psi_{m + 1, k}^{\alpha, N}(s)|^p + |r_{4, k}^{\alpha, N}(s)|^p + |r_{5, k}^{\alpha, N}(s)|^p + |r_{6, k}^{\alpha, N}(s)|^p\big) \ud s.
		\end{align*}
		Then the application of \eqref{FGbounded}, \eqref{Psibound}, Gr\"onwall's inequality, and H\"older's inequality yields 
		\begin{align}
			\sup_{t\in[0, T]}\mbf E|Z^{\alpha, N}(t)|^p \le K(p, T, \mbf E|X_0|^{p_0}),\quad \text{for }p \le p_2.
		\end{align}
		Similar to the proof of Lemma \ref{TEMnumberical}, one then obtains that for any $t$, $s\in[0, T]$, 
		\begin{align}
			\mbf E|X^{n}(t)-X^{n}(s)|^4 \le K|t-s|^{2}.
		\end{align}
		Applying H\"older's inequality and the BDG inequality, combining Lemma \ref{TEMbound}, \eqref{FGbound}, and \eqref{fgbound}, we can derive that for any $t$, $s\in[0, T]$, 
		\begin{align}
			\mbf E|U^{\alpha, N, n}(t)-U^{\alpha, N, n}(s)|^4 \le K|t-s|^{2}.
		\end{align}
		Similar to the proof of Theorem \ref{TEMconverge}, we have 
		\begin{align} \label{Xnconvergence}
			\sup_{t \in[0, T]}\mbf E|X(t)-X^n(t)|^4 \le Kn^{-2}.
		\end{align}
		Then consider the function $\eta (t) := \exp(-(L + 2)t)$, $t\in[0, T]$, where $L$ is the constant in Assumption \ref{assum1}. Then It\^o's formula and Young's inequality yield that for $\epsilon_3>0$ with $(1 + \epsilon_3)< p_0-1$, 
		\begin{align} \label{UUU}
			&\ \mbf E(\eta (t)|U^{\alpha, N, n}(t)-U^{\alpha, N}(t)|^2) \notag\\
			& = \mbf E\int_{0}^{t}\Big [-(L + 2)\eta (s)|U^{\alpha, N, n}(s)-U^{\alpha, N}(s)|^2\notag \\
			&\quad + \eta(s)\Big(2\LL U^{\alpha, N, n}(s)-U^{\alpha, N}(s), \frac{\nabla f(X^n(\kappa_n(s)))U^{\alpha, N, n}(\kappa_n(s))}{1 + \frac{T}{n}(|X^n(\kappa_n(s))|^2 + |U^{\alpha, N, n}(\kappa_n(s))|^2)^l}-\nabla f(X(s))U^{\alpha, N}(s)\RR\notag \\
			&\quad + \sum_{k = 1}^{m}\Big |\frac{\nabla g_k(X^n(\kappa_n(s)))U^{\alpha, N, n}(\kappa_n(s))}{1 + \frac{T}{n}(|X^n(\kappa_n(s))|^2 + |U^{\alpha, N, n}(\kappa_n(s))|^2)^l}-\nabla g_k(X(s))U^{\alpha, N}(s)\Big |^2\Big)\Big ] \ud s\notag\\
			& \le \mbf E\int_{0}^{t}\Big [-(L + 2)\eta (s)|U^{\alpha, N, n}(s)-U^{\alpha, N}(s)|^2\notag\\
			&\quad + \eta(s)\Big(2\LL U^{\alpha, N, n}(s)-U^{\alpha, N}(s), \nabla f(X(s))(U^{\alpha, N, n}(s)-U^{\alpha, N}(s))\RR\notag\\
			&\quad + (1 + \epsilon_3)\sum_{k = 1}^{m}|\nabla g_k(X(s))(U^{\alpha, N, n}(s)-U^{\alpha, N}(s))|^2 + 2|U^{\alpha, N, n}(s)-U^{\alpha, N}(s)|^2\notag\\
			&\quad + |\nabla f(X^n(\kappa_n(s)))U^{\alpha, N, n}(\kappa_n(s))-\nabla f(X(s))U^{\alpha, N, n}(s)|^2\notag\\
			&\quad + K\sum_{k = 1}^{m}\big|\nabla g_k(X^n(\kappa_n(s)))U^{\alpha, N, n}(\kappa_n(s))-\nabla g_k(X(s))U^{\alpha, N, n}(s)\big|^2\notag\\
			&\quad + Kn^{-2}\sum_{k = 1}^{m}\Big|\nabla g_k(X^n(\kappa_n(s)))U^{\alpha, N, n}(\kappa_n(s))\big(|X^n(\kappa_n(s))|^2 + |U^{\alpha, N, n}(\kappa_n(s))|^2\big)^l\Big|^2\notag\\
			&\quad + Kn^{-2}\Big|\nabla f(X^n(\kappa_n(s)))U^{\alpha, N, n}(\kappa_n(s))\big(|X^n(\kappa_n(s))|^2 + |U^{\alpha, N, n}(\kappa_n(s))|^2\big)^l\Big|^2\Big)\Big] \ud s.
		\end{align}
		
		Combining \eqref{genbounded}, and $\mathcal{D}^2 f$, $\mathcal{D}^2 g_k\in\mathbf{F}$, $k = 1, \ldots, m$, we derive that 
		\begin{align} \label{Uniform}
			\mbf E\big (\eta (t)|U^{\alpha, N, n}(t)-U^{\alpha, N}(t)|^2\big ) \le \mbf E\int_{0}^{t}\xi^{\alpha, n} (s) \ud s, 
		\end{align}
		where
		\begin{align*}
			\xi^{\alpha, n} (t)
			& = K\eta(t)\Big (1 + |X(t)|^\iota + |X(\kappa_n(t))|^\iota + |X^n(\kappa_n(t))|^\iota + |U^{\alpha, N, n}(\kappa_n(t))|^\iota\\
			&\quad + |U^{\alpha, N, n}(t)|^\iota + |U^{\alpha, N}(\kappa_n(t))|^\iota + |U^{\alpha, N}(t)|^\iota \Big )\Big(|U^{\alpha, N, n}(t)-U^{\alpha, N, n}(\kappa_n(t))|^2 \\
			&\quad + |X(t)-X^n(t)|^2 + |X^n(\kappa_n(t))-X^n(t)|^2 + n^{-2}\Big), \quad t\in[0, T], 
		\end{align*}
		for some $\iota>0 $ dependent on the growing degree of $\mathcal{D}^2 f$. By \eqref{FGbound}--\eqref{Xnconvergence} and H\"older's inequality, it can be shown that $\sup_{t\in[0, T]}\mbf E\xi^{\alpha, n} (t) \le Kn^{-1}$. Similar to \eqref{UUU} and \eqref{Uniform}, It\^o's formula also gives that
		for every stopping time $\tau \le T$, $\mbf E(\eta (\tau)|U^{\alpha, N, n}(\tau)-U^{\alpha, N}(\tau)|^2) \le \mbf E\int_{0}^{\tau}\xi^{\alpha, n} (s) \ud s $. 
		Applying Lemma \ref{uniform lp}, it holds that
		\begin{align*}
			\mbf E\sup_{t \in[0, T]}\big (\eta (t)|U^{\alpha, N, n}(t)-U^{\alpha, N}(t)|^2\big )^\gamma \le \mbf E\Big (\int_{0}^{T}\xi^{\alpha, n} (s) \ud s\Big )^\gamma, 
		\end{align*}
		for any $\gamma\in (0, 1) $. Then we infer 
		\begin{align*}
			\mbf E\sup_{t \in[0, T]}\big (|U^{\alpha, N, n}(t)-U^{\alpha, N}(t)|^2\big )^\gamma& \le \exp(\gamma(L + 2)T)\mbf E\sup_{t \in [0, T]}\big (\eta (t)|U^{\alpha, N, n}(t)-U^{\alpha, N}(t)|^2\big )^\gamma\\
			& \le K\Big (\mbf E \int_{0}^{T}\xi^{\alpha, n} (s) \ud s\Big )^\gamma \le Kn^{-\gamma}, 
		\end{align*}
		with $K$ being independent of $N$. By taking $\gamma = \frac{1}{2}$, one has $\lim_{n\to \infty}\sup_{N \geq 1}\mbf E \|U^{\alpha, N, n}-U^{\alpha, N}\|_{\mathbf{C}([0, T])} = 0$, which completes the proof. 
	\end{proof}

	With previous preparation, now we can give the limit distribution of $U^{\alpha, N}(t)$. 
	
	\begin{theo} \label{maintheorem1}
		Let Assumptions \ref{assum1} and \ref{assum2} hold. Then for any $t\in[0, T]$, we have that $U^{\alpha, N}(t) \overset{d}{\Longrightarrow} U^\alpha(t)$ as $N\to\infty$, and thus $N^{\frac{1}{2}\wedge\alpha}(\bar{X}^\alpha_N-X(T)) \overset{d}{\Longrightarrow} U^\alpha(T)$. Here, $U^\alpha = \{U^\alpha(t)\}_{t\in[0, T]}$ is the strong solution of the following equation
		\begin{align} \label{Ulimit}
			U^\alpha(t)
			& = \int_0^t\nabla f(X(s))U^{\alpha}(s) \ud s + \sum_{k = 1}^{m}\int_0^t\nabla g_k(X(s))U^{\alpha}(s) \ud W_k(s)\notag\\
			&\quad-\mathbbm 1_{\{0<\alpha \le \frac{1}{2}\}}T^{\alpha}\int_0^tf(X(s))|X(s)|^{2l} \ud s\notag-\mathbbm 1_{\{0<\alpha \le \frac{1}{2}\}}T^{\alpha}\sum_{k = 1}^{m}\int_0^tg_k(X(s))|X(s)|^{2l} \ud W_k(s)\\
			&\quad + \mathbbm 1_{\{\frac{1}{2} \le \alpha \le 1\}}\frac{\sqrt{2T}}{2}\sum_{k = 1}^{m}\sum_{u = 1}^{m}\int_{0}^{t}\nabla g_k(X(s))g_u(X(s)) \ud \widetilde{W}_{ku}(s), 
		\end{align}
		where $\mbf {\widetilde{W}} = (\widetilde{W}_{11}, \widetilde{W}_{12}, \ldots, \widetilde{W}_{1m}, \ldots, \widetilde{W}_{mm})$ is an $m^2$-dimensional standard Brownian motion independent of $\mbf W$.
	\end{theo}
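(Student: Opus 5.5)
The plan is to apply the uniform approximation theorem \cite[Theorem 3.2]{HJWY24}, with Lemma \ref{A1} supplying its Condition (A1) and Lemma \ref{IN41} supplying the stable convergence of the driving perturbation. Write $\Theta^{\alpha,N}=(X_0^\top,(\theta^{\alpha,N})^\top)^\top$. For fixed $n$, the tamed Euler recursion \eqref{Z2} expresses $Z^{\alpha,N,n}$ as a measurable function $\Phi_n$ of the pair $(\mathbf W,\Theta^{\alpha,N})$. We have $\Theta^{\alpha,N}(0)=(X_0,0)$, so $Z^{\alpha,N,n}(0)=(X_0,0)$. Between consecutive grid points, $Z^{\alpha,N,n}(t)$ equals the value at the previous grid point plus $\widetilde F(\cdot)$ times the time increment, plus $\widetilde G_k(\cdot)$ times the Brownian increments, plus the increment of $\Theta^{\alpha,N}$. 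Since $\widetilde F,\widetilde G_k$ are continuous, $\Phi_n:\mathbf C([0,T];\R^m)\times\mathbf C([0,T];\R^{2d})\to\mathbf C([0,T];\R^{2d})$ is \emph{continuous}. This is exactly why the paper re-discretizes: the limit map $(\mathbf W,\Theta)\mapsto Z$ is not continuous.

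First, combine Lemma \ref{IN41} with Proposition \ref{Pro1}, taking $Y=(\mathbf W,X_0)$. This gives joint stable convergence $(\mathbf W,\Theta^{\alpha,N})\overset{stably}{\Longrightarrow}(\mathbf W,\Theta^\alpha)$, where $\Theta^\alpha=(X_0^\top,(\theta^\alpha)^\top)^\top$ lives on the extension carrying $\widetilde{\mathbf W}$ independent of $\mathbf W$. By the continuous mapping theorem, for each fixed $n$ we get $U^{\alpha,N,n}\overset{d}{\Longrightarrow}U^{\alpha,n}$ as $N\to\infty$ in $\mathbf C([0,T];\R^d)$. Here $Z^{\alpha,n}=(X^n,U^{\alpha,n})=\Phi_n(\mathbf W,\Theta^\alpha)$ is the tamed Euler discretization of the limit system $\mathrm d Z=F(Z)\mathrm dt+\sum_k G_k(Z)\mathrm dW_k+\mathrm d\Theta^\alpha$. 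Second, on the extended space, note that $\mathbf W$ and $\widetilde{\mathbf W}$ are both Brownian motions with respect to a common filtration, and that $\theta^\alpha$ is a semimartingale with the bounds of Lemma \ref{UN-UtildeN}. The argument of Lemma \ref{A1}, applied verbatim with $\theta^{\alpha,N}$ replaced by $\theta^\alpha$, then gives $\lim_{n\to\infty}\widetilde{\mathbf E}\|U^{\alpha,n}-U^\alpha\|_{\mathbf C([0,T])}=0$, where $U^\alpha$ solves \eqref{Ulimit}. The limit equation \eqref{Ulimit} is linear in $U^\alpha$ with coefficients depending on $X$, and the monotonicity \eqref{genbounded} yields well-posedness and moment bounds.

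Third, invoke \cite[Theorem 3.2]{HJWY24}. Its ingredients are: the approximations $U^{\alpha,N,n}$ approach $U^{\alpha,N}$ uniformly in $N$ (Lemma \ref{A1}); for each $n$, $U^{\alpha,N,n}\overset{d}{\Longrightarrow}U^{\alpha,n}$; and $U^{\alpha,n}\to U^\alpha$. Together these yield $U^{\alpha,N}\overset{d}{\Longrightarrow}U^\alpha$ in $\mathbf C([0,T];\R^d)$. The underlying argument is a standard triangle inequality on $|\mathbf E h(U^{\alpha,N})-\widetilde{\mathbf E}h(U^\alpha)|$ for bounded Lipschitz $h$. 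Projecting at time $t$, a continuous functional, gives $U^{\alpha,N}(t)\overset{d}{\Longrightarrow}U^\alpha(t)$. At $t=T$ we have $X_N^\alpha(T)=\bar X_N^\alpha$, which gives the stated result for $N^{\frac12\wedge\alpha}(\bar X^\alpha_N-X(T))$.

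Finally, identify the limit $\theta^\alpha=\sum_{k=0}^{m+1}I_k^\alpha$ explicitly. The terms $I_0^\alpha$ and $I_k^\alpha$ from Lemma \ref{UN-UtildeN} are present only for $\alpha\le\frac12$. The term $I_{m+1}^\alpha$ from Lemma \ref{IN41} is present only for $\alpha\ge\frac12$. Inserting these produces exactly the indicator terms in \eqref{Ulimit}.

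The main obstacle is the second step, the uniform-in-$n$ convergence for the limit equation. The forcing $\theta^\alpha$ contains a stochastic integral against $\widetilde{\mathbf W}$ with superlinearly growing integrands $\nabla g_k g_u$, and the coefficients $\nabla f(x)y$ are non-Lipschitz. I would handle this by reusing the It\^o--Gronwall estimate with the weight $\eta(t)=e^{-(L+2)t}$ and Lemma \ref{uniform lp}, as in Lemma \ref{A1}. This works because $\theta^\alpha$ enters both $U^{\alpha,n}$ and $U^\alpha$ identically and cancels in their difference, so only moment bounds on $\theta^\alpha$ are needed. Those moment bounds follow from \eqref{Ybound} and BDG, given that $p_0$ is large.
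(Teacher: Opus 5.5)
Your proposal is correct and follows essentially the same route as the paper. Lemma \ref{A1} gives Condition (A1) of \cite[Theorem 3.2]{HJWY24}; stable convergence from Lemma \ref{IN41} and Proposition \ref{Pro1}, together with a continuous finite-recursion map for fixed $n$ and the continuous mapping theorem, gives (A2); and rerunning the weighted It\^o estimate of Lemma \ref{A1} on the extended space gives (A3). The only difference is minor and in your favour: the paper's map $\Gamma^n$ takes $X^n$ as an exogenous input next to $(\mathbf W,\theta^{\alpha,N})$, whereas your $\Phi_n$ produces the whole pair $Z^{\alpha,N,n}$ from $(\mathbf W,\Theta^{\alpha,N})$, which correctly reflects that the first component of \eqref{Z2} depends on $U^{\alpha,N,n}$ through the full-state taming factor and lets you invoke Proposition \ref{Pro1} with the $N$-independent $Y=(\mathbf W,X_0)$.
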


	\begin{proof}
		The proof is mainly based on \cite[Theorem 3.2]{HJWY24}. Lemma \ref{A1} indicates that $U^{\alpha, N, n}$ and $U^{\alpha, N}$ satisfy Condition (A1) of \cite[Theorem 3.2]{HJWY24}.
		
		For any fixed $n\in\mbb N^ + $, define the mapping $\Gamma^n:\mbf C([0, T];\mbb R^{d})\times\mbf C([0, T];\mbb R)^{\otimes m}\times\mbf C([0, T];\mbb R^{d})\to \mbf C([0, T];\mbb R^{d})$ which maps $(u, q_1, \ldots, q_m, h)$ to the solution of
		\begin{align*}
			p(t) & = \int_0^t\frac{\nabla f(u(\kappa_n(s)))p(\kappa_n(s))}{1 + \frac{T}{n}(|u(\kappa_n(s))|^2 + |p(\kappa_n(s))|^2)^l} \ud s\\
			&\quad + \sum_{i = 1}^m\int_0^t\frac{\nabla g_i(u(\kappa_n(s)))p(\kappa_n(s))}{1 + \frac{T}{n}(|u(\kappa_n(s))|^2 + |p(\kappa_n(s))|^2)^l} \ud q_i(s) + h(t), \quad t\in[0, T].
		\end{align*}
		Following the argument for the continuity of $F^\Delta$ in the proof of \cite[Theorem 4.3]{MAM}, we have that for any fixed $n\in\mbb N^ + $, $\Gamma^n$ is continuous w.r.t.\ $(u, q_1, \ldots, q_m, h)$. Further, it holds that $(X^n, W_1, \ldots, W_m, \theta ^{\alpha, N})\overset{stably}{\Longrightarrow}(X^n, W_1, \ldots, W_m, \theta ^{\alpha})$ as $N\to \infty$, due to Proposition \ref{Pro1} and Lemma \ref{IN41}. Thus, $(X^n, W_1, \ldots, W_m, \theta^{\alpha, N}) \overset{d}{\Longrightarrow} (X^n, W_1, \ldots, W_m, \theta ^{\alpha})$ as $N\to \infty $. The continuous mapping theorem gives that $\Gamma^n(X^n, W_1, \ldots, W_m, \theta^{\alpha, N}) \overset{d}{\Longrightarrow} \Gamma^n(X^n, W_1, \ldots, W_m, \theta^\alpha)$ in $\mbf C([0, T];\mbb R^{d})$ as $N\to\infty$. By the definition of $\Gamma^n$, we have that $\Gamma^n(X^n, W_1, \ldots, W_m, \theta^{\alpha, N}) = U^{\alpha, N, n}$ and that $U^{\alpha, \infty, n} := \Gamma^n(X^n, W_1, \ldots, W_m, \theta^\alpha)$ is the strong solution of
		\begin{align*}
			U^{\alpha, \infty, n}(t)& = \int_0^t\frac{\nabla f(X^n(\kappa_n(s)))U^{\alpha, \infty, n}(\kappa_n(s))}{1 + \frac{T}{n}(|X^n(\kappa_n(s))|^2 + |U^{\alpha, \infty, n}(\kappa_n(s))|^2)^l} \ud s\\
			&\quad + \sum_{i = 1}^m\int_0^t \frac{\nabla g_i(X^n(\kappa_n(s)))U^{\alpha, \infty, n}(\kappa_n(s))}{1 + \frac{T}{n}(|X^n(\kappa_n(s))|^2 + |U^{\alpha, \infty, n}(\kappa_n(s))|^2)^l} \ud W_i(s) + \theta^\alpha(t), \quad t\in[0, T].	
		\end{align*}
		Similar to the proof of Lemma \ref{A1}, one can show $\widetilde{\mbf E} \| U^{\alpha, \infty, n} - U^\alpha \|_{\mbf C([0, T])} \le Kn^{-\frac{1}{2}}$. In this way, we have 
		\begin{align*}
			U^{\alpha, N, n} \overset{d}{\Longrightarrow} U^{\alpha, \infty, n}~\text{as}~N\to\infty~\text{for given}~n, \quad U^{\alpha, \infty, n} \overset{d}{\Longrightarrow} U^\alpha~\text{as}~n\to\infty.
		\end{align*}
		Consequently, Conditions (A2) and (A3) of \cite[Theorem 3.2]{HJWY24} are fulfilled. Finally, the proof is complete through the application of \cite[Theorem 3.2]{HJWY24}.
	\end{proof}

	\begin{cor} \label{Cor1}
		Let Assumptions \ref{assum1} and \ref{assum2} hold. Then there exist $C_1$, $C_2$, $C_3>0$ independent of T such that for $\alpha \in[\frac{1}{2}, 1]$, and $t\in[0, T]$, 
		\begin{align}
			\mbf E|U^\alpha(t)|^2 \le (C_1 + C_2\mathbbm1_{\{\alpha = \frac{1}{2}\}})e^{C_3T}T^2.
		\end{align}
	\end{cor}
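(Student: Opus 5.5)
The plan is to apply It\^o's formula to $|U^\alpha(t)|^2$ for the limit equation \eqref{Ulimit} with $\alpha\in[\frac12,1]$, split into the cases $\alpha\in(\frac12,1]$ and $\alpha=\frac12$. In both cases the source term
\[
\frac{\sqrt{2T}}{2}\sum_{k,u}\int_0^t\nabla g_k(X(s))g_u(X(s))\,\ud\widetilde W_{ku}(s)
\]
is present. For $\alpha=\frac12$ there are two additional source terms, $-T^{1/2}\int_0^t f(X)|X|^{2l}\,\ud s$ and $-T^{1/2}\sum_k\int_0^t g_k(X)|X|^{2l}\,\ud W_k$. These extra terms are the origin of the $C_2\mathbbm 1_{\{\alpha=\frac12\}}$ contribution. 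Since $U^\alpha(0)=0$ and $\widetilde{\mbf W}$ is independent of $\mbf W$, the cross-variation between the $\ud W_k$ and $\ud\widetilde W_{ku}$ parts vanishes. This gives
\begin{align*}
\mbf E|U^\alpha(t)|^2
&=\mbf E\int_0^t\Big(2\LL U^\alpha,\nabla f(X)U^\alpha\RR+\sum_k|\nabla g_k(X)U^\alpha|^2\Big)\ud s\\
&\quad+\frac{T}{2}\sum_{k,u}\mbf E\int_0^t|\nabla g_k(X)g_u(X)|^2\ud s+\mathbbm 1_{\{\alpha=\frac12\}}\big(\text{extra terms}\big).
\end{align*}
The extra terms consist of the cross terms $-2T^{1/2}\LL U^\alpha,f(X)|X|^{2l}\RR$ and $-2T^{1/2}\sum_k\LL\nabla g_kU^\alpha,g_k|X|^{2l}\RR$ together with the quadratic term $T\sum_k|g_k(X)|X|^{2l}|^2$.

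By \eqref{genbounded} with $p_0-1>1$, the first integrand is at most $L|U^\alpha|^2$. For the extra terms, I would apply Young's inequality to the cross terms. This produces $|U^\alpha|^2+T|f(X)|^2|X|^{4l}$, and for the diffusion cross term it uses the slack $(p_0-2)\sum_k|\nabla g_kU^\alpha|^2$ left over in \eqref{genbounded}, so that $\sum_k|\nabla g_k U^\alpha|^2$ is absorbed at the cost of $K|U^\alpha|^2$ plus $KT|g_k(X)|^2|X|^{4l}$.

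All remaining source terms have the form $T\int_0^t\mbf E\,\phi(X(s))\,\ud s$ with $\phi\in\mbf F$, using \eqref{f l}, \eqref{g l} and \eqref{Dg}. By \eqref{Ybound} (with $p_0$ large by Assumption \ref{assum2}), $\mbf E\phi(X(s))\le Ce^{CT}$ with $C$ independent of $T$. Hence these terms are bounded by $Ce^{CT}T\cdot t\le Ce^{CT}T^2$ for $t\le T$. Gr\"onwall's inequality then gives
\[
\mbf E|U^\alpha(t)|^2\le (C_1+C_2\mathbbm1_{\{\alpha=\frac12\}})T^2e^{CT}e^{(L+K)t},
\]
and setting $C_3=C+L+K$ yields the claim.

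The main obstacle is bookkeeping the $T$-independence of the constants. The constants must come only from $L$, $L_1$, the growth constants in \eqref{Dg} and Assumption \ref{assum2}, and $C_1(p)$ in \eqref{Ybound}. The moment bound $\mbf E|X(s)|^{q}\le Ce^{CT}(1+\mbf E|X_0|^q)$ carries an $X_0$-dependence, which I would fold into $C_1,C_2$. One also needs to justify It\^o's formula and finiteness of $\mbf E|U^\alpha|^2$ beforehand; I would do this by localization with stopping times and Fatou's lemma, using the linearity of \eqref{Ulimit} in $U^\alpha$.
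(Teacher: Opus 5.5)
Your proposal is correct and takes essentially the same route as the paper. Both arguments apply It\^o's formula to $|U^\alpha|^2$ on \eqref{Ulimit} and use Young's inequality together with \eqref{genbounded}, whose slack from $p_0>2$ absorbs the diffusion cross term. Both then use the growth bounds \eqref{f l}, \eqref{g l}, \eqref{Dg} and the moment estimate \eqref{Ybound} to make every source term $O(e^{CT}T^2)$, and finish with Gr\"onwall's inequality. Your localization step, which justifies taking expectations, adds rigor that the paper leaves implicit.
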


	\begin{proof}
		In this proof, we denote by $C$ a generic constant independent of $T$, which may vary for each appearance. By \eqref{Ulimit}, It\^o's formula gives that for $\alpha\in[\frac{1}{2}, 1]$, 
		\begin{align*}
			\mbf E|U^\alpha(t)|^2
			& = \mbf E\int_{0}^{t}\Big[2\LL U^\alpha(s), \nabla f(X(s))U^\alpha(s)-\mathbbm 1_{\{\alpha = \frac{1}{2}\}}T^{\frac{1}{2}}f(X(s))|X(s)|^{2l}\RR\\
			&\quad + \sum_{k = 1}^{m}\Big |\nabla g_k(X(s))U^\alpha(s)-\mathbbm 1_{\{\alpha = \frac{1}{2}\}}T^{\frac{1}{2}}g_k(X(s))|X(s)|^{2l}\Big |^2 \\
			&\quad + \frac{T}{2}\sum_{k = 1}^{m}\sum_{u = 1}^{m}|\nabla g_k(X(s))g_u(X(s))|^2\Big] \ud s.
		\end{align*}
		Then Young's inequality, \eqref{f l}, \eqref{g l}, \eqref{genbounded}, and \eqref{Dg} yield 
		\begin{align*}
			&\ \mbf E|U^\alpha(t)|^2 \\ 
			& \le C\mbf E\int_{0}^{t}|U^\alpha(s)|^2 \ud s + C\mathbbm1_{\{\alpha = \frac{1}{2}\}}T\mbf E\int_{0}^{t} |f(X(s))|^2|X(s)|^{4l} \ud s\\
			&\quad + C\mathbbm1_{\{\alpha = \frac{1}{2}\}}T\mbf E\int_{0}^{t}\sum_{k = 1}^{m}|g_k(X(s))|^2|X(s)|^{4l} \ud s + CT\sum_{k = 1}^{m}\sum_{u = 1}^{m}\mbf E\int_{0}^{t}|\nabla g_k(X(s))g_u(X(s))|^2 \ud s\\
			& \le C\int_{0}^{t}\mbf E|U^\alpha(s)|^2 \ud s + C\mathbbm1_{\{\alpha = \frac{1}{2}\}}T\int_{0}^{t}\mbf E|X(s)|^{\iota} \ud s + CT\int_{0}^{t}\mbf E|X(s)|^{\iota} \ud s + CT^2, 
		\end{align*}
		for some $\iota>0 $ dependent on the growing degree of $f$ and $g_k$, $k = 1, \ldots, m$. It follows from \eqref{Ybound} that 
		\begin{align*}
			&\mbf E|U^\alpha(t)|^2 \le C\int_{0}^{t}\mbf E|U^\alpha(s)|^2 \ud s + (C\mathbbm1_{\{\alpha = \frac{1}{2}\}}e^{CT} + Ce^{CT})T^2.
		\end{align*}
		Thus, the proof is complete by applying Gr\"onwall's inequality.
	\end{proof}

	\begin{rem} \label{RA}
		Since $N^{\alpha\wedge\frac{1}{2}}(\bar{X}_N^\alpha-X(T)) \overset{d}{\Longrightarrow} U^\alpha(T)$, $\mbf E|\bar{X}_N^\alpha-X(T)|^2\approx\frac{1}{N^{2\alpha\wedge1}}\mbf E|U^\alpha(T)|^2$ for $N\gg1$. Thus, to some extent, Theorem \ref{maintheorem1} and Corollary \ref{Cor1} indicate that $\alpha$ is the key parameter
		reflecting the growth rate of mean-square error of the tamed Euler method \eqref{TEM}. In addition, we infer that for the tamed Euler method \eqref{TEM} of strong order $\frac{1}{2}$, the one with $\alpha = \frac{1}{2}$ has the largest mean-square error after a long time.
	\end{rem}

	\section{Asymptotic error distribution for additive noise} \label{Secadd}
	
	In this section, we investigate the asymptotic error distribution of a class of tamed Euler methods applied to SDEs with additive noise. 
	
	Consider the following SDE:\ 
	\begin{align} \label{mulSDE2}
		\begin{cases}
			 \ud Y(t) = f(Y(t)) \ud t + \sigma \ud \mbf W(t), \qquad t\in(0, T], \\
			Y(0) = Y_0, 
		\end{cases}
	\end{align}
	where $f:\R^d\to \R^d$, $\sigma = (\sigma_1, \ldots, \sigma_m)\in \R^{d\times m}$ is a constant matrix, and $\mbf W = (W_1, W_2, \ldots, W_m)$ is an $m$-dimensional standard Brownian motion. Then we introduce the following assumption.
	
	\begin{assum} \label{assum3}
		Assume that the following conditions hold.
		\begin{itemize}
			
			\item [(B-1)] $f\in \mathbf{C}^2(\R^d)$, $\mathcal{D}^2f\in \mathbf{F}$, and there exists a constant $L_2>0$ such that 
			$$ 2\LL x-y, f(x) -f(y)\RR \le L_2|x-y|^2, \quad \forall \, x, \, y\in \R^d. $$
			
			\item [(B-2)] $Y_0 \in \mbf L^{p}(\Omega, \mathcal F, \mbf P;\mbb R^d)$ for all $p \geq 1$.
		\end{itemize}
	\end{assum}
	
	\begin{rem}
		It follows from Assumption \ref{assum3} that there exist $L_3>0$ and $l>0$ such that
		\begin{align} 
			& 2\LL x, f(x) \RR \le L_3(1 + {|x|}^2), \\
			& |f(x) -f(y)| \le L_3(1 + |x|^l + |y|^l)|x-y|, 
		\end{align} 
		for all $x$, $y\in\R^d$. Further, under Assumption \ref{assum3}, \eqref{mulSDE2} admits a unique strong solution given by
		\begin{align} \label{functionY}
			Y(t) = Y_0 + \int_{0}^{t}f(Y(s)) \ud s + \int_{0}^{t}\sigma \ud \mathbf{W}(s), \quad t\in[0, T].
		\end{align}
		Moreover, for any $p>0$, and $t\in[0, T]$, 
		\begin{align}
			\|Y(t)\|_{\mbf L^p(\Omega)}& \le C_2(p)e^{C_2(p)T} \big( 1 + \|Y_0\|_{\mbf L^p(\Omega)} \big),
		\end{align}
		where $C_2(p)$ depends on $p$ but is independent of $T$ (cf.\ \cite[Chapter 2.4]{Maoxuerong}).
	\end{rem}

	\subsection{Strong convergence of tamed Euler method for additive noise}
	
	In this section, we establish the strong convergence rate of the tamed Euler method for additive noise. 
	
	Consider the following tamed Euler method for \eqref{mulSDE2}:
	\begin{align} \label{TEMA}
		\bar{Y}_{n + 1}^{\alpha} = \bar{Y}_n^{\alpha} + \frac{T}{N}f^{\alpha}(\bar{Y}_n^{\alpha}) + \sigma \left(\mbf W\big(\frac{(n + 1)T}{N}\big)-\mbf W\big(\frac{nT}{N}\big)\right), 
	\end{align}
	for $n = 0, 1, \ldots, N-1$, $N\in \mathbb N$, where $\bar{Y}_0^{\alpha} = Y_0$, $f^{\alpha}(x) := \frac{f(x) }{1 + (\frac{T}{N})^{\alpha}|x|^{l_\alpha}}$, $x\in\R^d$, with $l_\alpha = \lceil 2\alpha\rceil l$, and $\alpha>0 $. 
	
	Introduce the continuous version of ${\lbrace \bar {Y}_n^{\alpha}\rbrace}_{n = 0}^N$:\ 
	\begin{align} \label{CTEMA}
		Y^\alpha_N(t) = Y_0 + \int_{0}^{t} f^{\alpha}(Y^\alpha_N(\kappa_N(s))) \ud s + \int_{0}^{t} \sigma \ud \mbf W(s), 
	\end{align}
	where $\kappa_N(t) = \lfloor\frac{Nt}{T}\rfloor\frac{T}{N}$, $t\in[0, T]$.

	\begin{theo} \label{TEMAconverge}
		Let Assumption \ref{assum3} hold. Then for any $p>0$, the solution of \eqref{CTEMA} converges to that of the SDE \eqref{mulSDE2} in $\mathbf{L}^p$-sense with order $\alpha\wedge1$, that is, 
		\begin{align}
			\sup_{t\in[0, T]}\mbf E|Y^\alpha_N(t)-Y(t)|^p \le KN^{-(\alpha \wedge1)p }.
		\end{align}
	\end{theo}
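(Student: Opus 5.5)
We work with the error $E(t):=Y^\alpha_N(t)-Y(t)$. Since the noise is additive, the stochastic integrals cancel and
\[
E(t)=\int_0^t\big[f^\alpha(Y^\alpha_N(\kappa_N(s)))-f(Y(s))\big]\ud s ,
\]
so $E$ is absolutely continuous, and the proof is essentially pathwise plus moment bounds.

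\textbf{Step 1: moment bounds.} We first establish $\sup_N\sup_t\mbf E|Y^\alpha_N(t)|^p<\infty$ for every $p$. Because $l_\alpha=\lceil 2\alpha\rceil l$, the AM--GM argument from the proof of Theorem \ref{M2} gives $|f^\alpha(x)|\le K N^{\alpha/\lceil 2\alpha\rceil}(1+|x|)$, with exponent $\alpha/\lceil 2\alpha\rceil\le\frac12$. The tamed monotonicity $2\langle x,f^\alpha(x)\rangle\le K(1+|x|^2)$ also holds. Hence the argument of \cite[Lemma 2]{Sabanis2016} (as in Lemma \ref{TEMbound}) applies with $g$ constant. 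Since $Y_0$ has moments of all orders, this holds for all $p$. As in Lemma \ref{TEMnumberical}, we then get $\mbf E|Y^\alpha_N(t)-Y^\alpha_N(s)|^p\le K|t-s|^{p/2}$.

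\textbf{Step 2: energy identity.} We split
\[
f^\alpha(Y_N(\kappa_N(s)))-f(Y(s))=[f(Y_N(s))-f(Y(s))]+[f(Y_N(\kappa_N(s)))-f(Y_N(s))]+[f^\alpha-f](Y_N(\kappa_N(s))).
\]
We differentiate $|E|^p$ for $p\ge 2$. The first term is handled by the one-sided Lipschitz condition (B-1). The third term, by the argument of Lemma \ref{TEMfunction}, is $O(N^{-\alpha})$ in every $\mbf L^q$. The real difficulty is the middle term, since a crude bound only gives order $\frac12$.

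\textbf{Step 3: the middle term (main obstacle).} We Taylor-expand it as $\nabla f(Y_N(\kappa_N(s)))\,(Y_N(s)-Y_N(\kappa_N(s)))+O(|Y_N(s)-Y_N(\kappa_N(s))|^2)$. The quadratic remainder is $O(N^{-1})$ in $\mbf L^q$. The drift part of the increment contributes $O(N^{-1}\cdot N^{\alpha/\lceil2\alpha\rceil}\cdots)$; we use instead the polynomial bound $|f^\alpha|\le|f|$ to make it $O(N^{-1})$. The delicate piece is
\[
\int_0^t|E(s)|^{p-2}\big\langle E(s),\nabla f(Y_N(\kappa_N(s)))\,\sigma(\mbf W(s)-\mbf W(\kappa_N(s)))\big\rangle\,\ud s,
\]
which is only $O(N^{-1/2})$ if bounded directly. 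We will replace $E(s)$ by $E(\kappa_N(s))$. Then, conditionally on $\mathcal F_{\kappa_N(s)}$, the Brownian increment has mean zero, so that part has zero expectation. The correction from $E(s)-E(\kappa_N(s))=\int_{\kappa_N(s)}^s(\cdots)\ud r$ is $O(N^{-1})$ times polynomial moments. This is the standard trick for order-one convergence with additive noise. If the $|E|^{p-2}$ weight complicates the replacement, we will do the case $p=2$ first. For general $p$, we use the mean-value expansion of $|E(s)|^{p-2}E(s)$ around $\kappa_N(s)$ together with $|E(s)-E(\kappa_N(s))|\le K N^{-1}(1+\text{polynomial})$, which holds since $E$ has bounded derivative in moments. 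Young's inequality then gives
\[
\frac{\ud}{\ud t}\mbf E|E|^p\le K\mbf E|E|^p+KN^{-(\alpha\wedge1)p},
\]
and Gronwall's inequality yields the claim for $p\ge2$. Hölder's inequality gives the case $p<2$. The cap at order $1$ comes from the time-discretization terms, and the order $\alpha$ comes from the taming term.
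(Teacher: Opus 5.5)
Your energy-method route differs from the paper's. The paper never differentiates $|E|^p$. It bounds the one-step error, $|\mbf E(Y_{t,x}(t+h)-Y^\alpha_{t,x}(t+h))|\le K(1+|x|^\iota)(h^{1+\alpha}+h^2)$ and $\|Y_{t,x}(t+h)-Y^\alpha_{t,x}(t+h)\|_{\mbf L^p(\Omega)}\le K(1+|x|^\iota)(h^{1+\alpha}+h^{3/2})$, and turns these into global order $\alpha\wedge1$ via the fundamental mean-square convergence theorem of Tretyakov--Zhang. Your route can work in principle, but Step 3 as written only yields order $\alpha\wedge\frac34$. So it fails in exactly the regime the theorem is about.

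\textbf{The gap.} The problem is the correction term created when you replace $E(s)$ by $E(\kappa_N(s))$. Already for $p=2$ it equals
\[
-2\,\mbf E\big\langle E(s)-E(\kappa_N(s)),\,\nabla f(Y^\alpha_N(\kappa_N(s)))\,\sigma\big(\mbf W(s)-\mbf W(\kappa_N(s))\big)\big\rangle .
\]
With only your bound $|E(s)-E(\kappa_N(s))|\le KN^{-1}(1+\text{poly})$ and $|\mbf W(s)-\mbf W(\kappa_N(s))|=O(N^{-1/2})$, this term is $O(N^{-3/2})$, not the needed $O(N^{-2(\alpha\wedge1)})$. No factor of $E$ is left to absorb by Young's inequality, so Gr\"onwall only gives $\mbf E|E|^2\lesssim N^{-3/2}$. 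For $p>2$ the mean-value expansion of $|E|^{p-2}E$ gives $\mbf E[|E|^{p-2}N^{-3/2}P]\le \mbf E|E|^p+KN^{-3p/4}$, which is the same loss. Doing $p=2$ first does not help.

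\textbf{The fix.} The missing observation is that $E'$ is not merely bounded in moments but small:
\[
\big|f^\alpha(Y^\alpha_N(\kappa_N(r)))-f(Y(r))\big|\le P(r)\Big(N^{-\alpha}+\big|Y^\alpha_N(r)-Y^\alpha_N(\kappa_N(r))\big|+|E(r)|\Big).
\]
Here $P$ is polynomial in $|Y^\alpha_N(\kappa_N(r))|$, $|Y^\alpha_N(r)|$ and $|Y(r)|$. Insert this into $E(s)-E(\kappa_N(s))=\int_{\kappa_N(s)}^s(\cdots)\,\ud r$ and use H\"older and Young. Writing $u(s):=\sup_{r\le s}\mbf E|E(r)|^p$, the three pieces contribute at most $u(s)+KN^{-p}$, $u(s)+KN^{-(\frac34+\frac\alpha2)p}$ and $u(s)+KN^{-3p/2}$. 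Since $\frac34+\frac\alpha2\ge\alpha\wedge1$ for every $\alpha>0$, Gr\"onwall applied to $u$ closes the argument.

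For $p=2$ there is an alternative. Split off the $\mathcal F_{\kappa_N(s)}$-measurable part $(s-\kappa_N(s))\big[f^\alpha(Y^\alpha_N(\kappa_N(s)))-f(Y(\kappa_N(s)))\big]$, which is uncorrelated with the Brownian increment. What remains, $\int_{\kappa_N(s)}^s[f(Y(\kappa_N(s)))-f(Y(r))]\,\ud r$, is $O(N^{-3/2})$, so its pairing with the increment is $O(N^{-2})$. In the paper this cancellation is built into the $O(h^{1+\alpha}+h^2)$ bound on the mean of the local error.
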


	\begin{proof}
		Similar to the proof of Lemma \ref{TEMbound}, it can be shown that for any $p>0$, 
		\begin{align}
			\sup_{N\in \mathbb{N}}\sup_{t\in[0, T]}\mathbf{E}|Y^\alpha_N(t)|^p \le K. \label{AA1}
		\end{align}
		Consider the one step approximation of numerical scheme \eqref{TEMA}:
		\begin{align*}
			Y_{t, x}^\alpha(t + h) = x + \frac{hf(x) }{1 + h^{\alpha}|x|^{l_\alpha}} + \sigma\big(\mbf W(t + h)-\mbf W(t)\big), 
		\end{align*}
		where $h>0$ and $0 \le t \le T-h $. Let $Y_{t, x}(t + h)$ be the strong solution of 
		\begin{align} \label{Ytx}
			Y_{t, x}(t + h) = x + \int_{t}^{t + h}f(Y_{t, x}(s)) \ud s + \int_{t}^{t + h}\sigma \ud \mbf W(s), 
		\end{align}
		where $h>0$ and $0 \le t \le T-h $. 
		One observes that $	Y_{t, x}(t + h)-Y_{t, x}^\alpha(t + h) = \int_{t}^{t + h}\big [f(Y_{t, x}(s))-f(x) \big ] \ud s + h\big (f(x) -\frac{f(x) }{1 + h^{\alpha}|x|^{l_\alpha}}\big )$.
		The application of Taylor's formula and \eqref{Ytx} yields 
		\begin{align*}
			&\ f(Y_{t, x}(s))-f(x) \\
			& = \nabla f(x) \left( \int_{t}^{s}f(Y_{t, x}(r)) \ud r + \int_{t}^{s}\sigma \ud \mbf W(r) \right) \\
			&\quad + \int_{0}^{1}(1-\lambda)\mathcal{D}^2f\big (x + \lambda(Y_{t, x}(t + h)-x)\big )(Y_{t, x}(t + h)-x, Y_{t, x}(t + h)-x) \ud \lambda.
		\end{align*}
		Then it can be shown that 
		\begin{align}
			& \big |\mbf E\big(Y_{t, x}(t + h)-Y_{t, x}^\alpha(t + h)\big)\big| \le K(1 + |x|^\iota)(h^{\alpha + 1} + h^2), \label{AA2}\\
			& \big (\mbf E\big|Y_{t, x}(t + h)-Y_{t, x}^\alpha(t + h)\big|^p\big)^{\frac{1}{p}} \le K(1 + |x|^\iota)(h^{\alpha + 1} + h^{\frac{3}{2}} + h^2), \quad p \geq 2, \label{AA3}
		\end{align}
		for some $\iota>0 $ dependent on $\alpha$ and the growing degree of $ f$. 
		By \eqref{AA1}, \eqref{AA2}, and \eqref{AA3}, we deduce from \cite[Theorem 2.1]{ZhangZQ13} that for any $p \geq 2$, 
		\begin{align*}
			\sup_{t \in[0, T]}\mbf E\big|Y(\kappa_N(t))-Y_{N}^\alpha(\kappa_N(t))\big|^p \le KN^{-p(\alpha\wedge 1)}.
		\end{align*}
		Applying \eqref{functionY} and \eqref{CTEMA}, we have 
		\begin{align*}
			Y(t)-Y^\alpha_N(t) = Y(\kappa_N(t))-Y_{N}^\alpha(\kappa_N(t)) + \int_{\kappa_N(t)}^{t}\big[f(Y(s))- f^{\alpha}(Y^\alpha_N(\kappa_N(s)))\big] \ud s.
		\end{align*}
		Therefore for any $p \geq 2$, 
		\begin{align*}
			\sup_{t\in[0, T]}\mbf E|Y^\alpha_N(t)-Y(t)|^p \le KN^{-(\alpha \wedge1)p }.
		\end{align*}
		Finally, the proof is complete by H\"older's inequality.
	\end{proof}

	\subsection{Asymptotic error distribution for additive noise}
	
	In this section, we derive the asymptotic error distribution for additive noise, which requires the following assumption.

	\begin{assum} \label{assum4}
		Assume that $f\in\mbf C^3(\mbb R^d)$ and $\mathcal D^3f\in\mbf F$.
	\end{assum}

	We are in the position to give an expansion for the normalized error $N^{\alpha\wedge1}( \bar{Y}^\alpha_N(t)-Y(t))$.
	
	\begin{lem} 
		Denote $V^{\alpha, N}(t) := N^{\alpha\wedge1}( Y^\alpha_N(t)-Y(t))$, $t\in[0, T]$. Let Assumptions \ref{assum3} and \ref{assum4} hold. Then $V^{\alpha, N}$ has the following representation
		\begin{align} \label{YY0}
			V^{\alpha, N}(t)& = \int_0^t\nabla f(Y(s))V^{\alpha, N}(s) \ud s + \sum_{i = 0}^{3}J_i^{\alpha, N}(t) + \widetilde{R}^{\alpha, N}(t), \quad t\in[0, T]. 
		\end{align}	
		Here, 
		\begin{align*}
			& J_0^{\alpha, N}(t) = -T^{\alpha} N^{(\alpha\wedge1)-\alpha}\int_0^t\frac{f(Y^\alpha_N(\kappa_N(s)))|Y^\alpha_N(\kappa_N(s))|^{l_\alpha}}{1 + (\frac{T}{N})^{\alpha}|Y^\alpha_N(\kappa_N(s))|^{l_\alpha}} \ud s, \\
			& J_1^{\alpha, N}(t) = -N^{\alpha\wedge1}\sum_{k = 1}^{m}\int_{0}^{t}\nabla f(Y^\alpha_N(\kappa_N(s)))(\kappa_N(s) + \frac{T}{N}-s)\sigma_k \ud W_k(s), \\
			& J_{2}^{\alpha, N}(t) = -\frac{1}{2}N^{\alpha\wedge1}\sum_{k = 1}^{m}\int_{0}^{t}\mathcal{D}^2 f(Y^\alpha_N(\kappa_N(s)))(\sigma_k, \sigma_k)(s-\kappa_N(s)) \ud s, \\
			& J_{3}^{\alpha, N}(t) = -N^{\alpha\wedge1}\int_{0}^{t}\nabla f(Y^\alpha_N(\kappa_N(s)))f(Y^\alpha_N(\kappa_N(s)))(s-\kappa_N(s)) \ud s, 
		\end{align*}
		and $\widetilde{R}^{\alpha, N}(t)$ is the remainder with $\lim_{N\to\infty}\mbf E\|\widetilde{R}^{\alpha, N}\|^2_{\mathbf{C}([0, T])} = 0 $. 
	\end{lem}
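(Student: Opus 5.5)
We follow the same scheme as in the proof of Lemma \ref{UNexpression}, now expanding one order further because the target rate is $N^{-(\alpha\wedge1)}$. Subtracting \eqref{functionY} from \eqref{CTEMA}, the noise cancels, and we are left with
\begin{align*}
V^{\alpha,N}(t)=N^{\alpha\wedge1}\int_0^t\big[f^\alpha(Y^\alpha_N(\kappa_N(s)))-f(Y(s))\big]\ud s .
\end{align*}
We split the integrand into three pieces:
\begin{align*}
\big[f^\alpha-f\big](Y^\alpha_N(\kappa_N(s)))+\big[f(Y^\alpha_N(s))-f(Y(s))\big]+\big[f(Y^\alpha_N(\kappa_N(s)))-f(Y^\alpha_N(s))\big].
\end{align*}
The first piece is exactly $J_0^{\alpha,N}$, by the identity $f^\alpha-f=-(\frac TN)^\alpha|x|^{l_\alpha}f/(1+(\frac TN)^\alpha|x|^{l_\alpha})$. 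For the second piece, a first-order Taylor expansion around $Y(s)$ gives $\int_0^t\nabla f(Y(s))V^{\alpha,N}(s)\ud s$. The quadratic Taylor remainder, multiplied by $N^{\alpha\wedge1}$, is bounded by $N^{\alpha\wedge1}|Y^\alpha_N-Y|^2$ times a polynomial weight. By Theorem \ref{TEMAconverge} and \eqref{AA1}, its $\mbf C([0,T])$-norm is $O(N^{-(\alpha\wedge1)})$ in $\mbf L^2$. We call this remainder $\widetilde R_1$.

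The third piece is the essential one. We expand $f(Y^\alpha_N(s))$ around $Y^\alpha_N(\kappa_N(s))$ to second order, using $\mathcal D^3f\in\mbf F$ (Assumption \ref{assum4}) for a third-order remainder. Write $\Delta(s):=Y^\alpha_N(s)-Y^\alpha_N(\kappa_N(s))=f^\alpha(Y^\alpha_N(\kappa_N(s)))(s-\kappa_N(s))+\sigma(\mbf W(s)-\mbf W(\kappa_N(s)))$. This yields four contributions:
\begin{itemize}
\item \emph{Linear noise term} $-N^{\alpha\wedge1}\int\nabla f(\cdot)\sigma(\mbf W(s)-\mbf W(\kappa_N(s)))\ud s$. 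Apply the stochastic Fubini theorem exactly as in \eqref{stochatic fubini}. This gives $J_1^{\alpha,N}$ plus a boundary term over $[\kappa_N(t),t]$. The boundary term is handled like $R_4^{\alpha,N}$: the Kolmogorov/H\"older-continuity bound $\mbf E\sup_t|W(t)-W(\kappa_N(t))|^4\le KN^{-2q}$ together with the $N^{-1}$ time factor makes it $o(1)$ in $\mbf E\|\cdot\|^2_{\mbf C}$, since $\alpha\wedge1\le1$.
\item \emph{Linear drift term.} After replacing $f^\alpha$ by $f$, this gives $J_3^{\alpha,N}$. The error is $N^{\alpha\wedge1}\cdot N^{-1}\cdot O(N^{-\alpha})$, which vanishes.
\item \emph{Quadratic noise term} $-\frac12N^{\alpha\wedge1}\int\mathcal D^2f(\cdot)(\sigma\Delta W,\sigma\Delta W)\ud s$. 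Split $(\Delta W_k)(\Delta W_j)=\delta_{kj}(s-\kappa_N(s))+\text{(Itô double integrals)}$ by Itô's formula. The diagonal deterministic part is $J_2^{\alpha,N}$. The martingale part is $N^{\alpha\wedge1}\int_0^t\Phi(\kappa_N(s))\int_{\kappa_N(s)}^s\int_{\kappa_N(s)}^r\ud W\ud W\ud s$. Its $\mbf E\sup$ is controlled by stochastic Fubini and the BDG inequality, giving order $N^{\alpha\wedge1}N^{-3/2}\to0$ (as for $B_2^{\alpha,N}$ and $P^k_{u_1,u_2}$ in Lemma \ref{IN41}).
\item \emph{Remaining terms.} Cross terms drift$\times$noise and drift$\times$drift in the quadratic form, together with the cubic Taylor remainder, are $O(N^{\alpha\wedge1}N^{-3/2})$ in $\mbf L^2(\Omega;\mbf C)$. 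For these we use crude bounds $\sup_t$ inside the time integral, \eqref{AA1}, and polynomial growth.
\end{itemize}

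We then collect everything into $\widetilde R^{\alpha,N}$. Every remainder is a Lebesgue time integral, or a stochastic integral bounded through BDG/Doob, so the supremum over $t$ costs nothing beyond $\int_0^T|\cdot|\ud s$. The only point-evaluation boundary term is the Fubini one, which we treat as above.

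We expect the main obstacle to be keeping the uniform-in-time ($\mbf C([0,T])$) bounds at the sharp order when $\alpha\ge1$. In that regime $N^{\alpha\wedge1}=N$, and the margins are only $N^{-1/2}$ for the double-integral and cross terms. The Fubini boundary term then has size $N\cdot N^{-1}\sup|W(t)-W(\kappa_N(t))|$, which is $O(N^{-q/2})$ with $q<1/2$: it still vanishes, but only barely. Moment bounds for $\sup_t|Y^\alpha_N(t)|$ are needed and follow from the argument of Corollary \ref{TEMuniformbound}, since all moments are finite under Assumption \ref{assum3}.
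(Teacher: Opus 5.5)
Your proposal is correct and follows essentially the same route as the paper: the split into $J_0^{\alpha,N}$, a first-order Taylor expansion around $Y(s)$, a second-order expansion around $Y^\alpha_N(\kappa_N(s))$ with a $\mathcal D^3 f$ remainder, stochastic Fubini to produce $J_1^{\alpha,N}$, and It\^o's formula on the quadratic noise term to extract $J_2^{\alpha,N}$, with all remainders bounded in $\mathbf L^2(\Omega;\mathbf C([0,T]))$. You are slightly more careful than the paper in accounting for replacing $f^\alpha$ by $f$ in $J_3^{\alpha,N}$. Note, however, that stochastic Fubini on the iterated-integral part also produces boundary terms on $[\kappa_N(t),t]$ (they appear explicitly in the paper's $\widetilde R_4^{\alpha,N}$), so the one from $J_1^{\alpha,N}$ is not the only one; these extra terms are even smaller and are handled the same way.
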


	\begin{proof}
	Combining \eqref{functionY} and \eqref{CTEMA}, we have 
		\begin{align} \label{V0}
			V^{\alpha, N}(t) = N^{\alpha\wedge1}\int_0^t \big [f^\alpha(Y^\alpha_N(\kappa_N(s)))-f(Y(s))\big ] \ud s.
		\end{align}
		Taylor's formula yields
		\begin{align}
			&\ N^{\alpha\wedge1}\int_0^t \big [f^\alpha(Y^\alpha_N(\kappa_N(s)))-f(Y(s))\big ] \ud s \notag\\
			& = \int_0^t\nabla f(Y(s))V^{\alpha, N}(s) \ud s + N^{\alpha\wedge1}\int_{0}^{t}\nabla f(Y^\alpha_N(\kappa_N(s)))(Y^\alpha_N(\kappa_N(s))-Y^\alpha_N(s)) \ud s\notag\\
			&\quad-N^{\alpha\wedge1}\int_{0}^{t}\frac{1}{2}\mathcal{D}^2 f(Y^\alpha_N(\kappa_N(s)))\big (Y^\alpha_N(\kappa_N(s))-Y^\alpha_N(s), Y^\alpha_N(\kappa_N(s))-Y^\alpha_N(s)\big ) \ud s\notag\\
			&\quad + \widetilde{R}_1^{\alpha, N}(t) + \widetilde{R}_2^{\alpha, N}(t) + J_0^{\alpha, N}(t), 
		\end{align}
		where
		\begin{align*}
			&\widetilde{R}_1^{\alpha, N}(t) = N^{\alpha\wedge1}\int_{0}^{t}\int_{0}^{1}(1-\lambda)\mathcal{D}^2f\big(Y(s) + \lambda(Y^\alpha_N(s)-Y(s))\big)(Y^\alpha_N(s)-Y(s), Y^\alpha_N(s)-Y(s)) \ud \lambda \ud s, \\
			&\widetilde{R}_2^{\alpha, N}(t) = N^{\alpha\wedge1}\int_{0}^{t}\int_{0}^{1}\frac{(1-\lambda)^2}{2}\mathcal{D}^3 f(Y^\alpha_N(\kappa_N(s)) + \lambda(Y^\alpha_N(s)-Y^\alpha_N(\kappa_N(s))))\\
			&\;\;\;\;\;\;\;\;\;\;\;\;\;\;\;\;\;\;\;\;\;\;\big (Y^\alpha_N(\kappa_N(s))-Y^\alpha_N(s), Y^\alpha_N(\kappa_N(s))-Y^\alpha_N(s), Y^\alpha_N(\kappa_N(s))-Y^\alpha_N(s)\big ) \ud \lambda \ud s, ~ t\in[0, T], 
		\end{align*}
		with $\mbf E\|\widetilde{R}_1^{\alpha, N}\|_{\mathbf{C}([0, T])}^2 \le KN^{-2(\alpha\wedge1)}$, and $\mbf E\|\widetilde{R}_2^{\alpha, N}\|^2_{\mathbf{C}([0, T])} \le KN^{-1}$. Due to \eqref{CTEMA}, it holds that
		\begin{align}
			&\ N^{\alpha\wedge1}\int_{0}^{t}\nabla f(Y^\alpha_N(\kappa_N(s))) (Y^\alpha_N(\kappa_N(s))-Y^\alpha_N(s)) \ud s \notag\\
			& = J_3^{\alpha, N}(t) - N^{\alpha\wedge1}\sum_{k = 1}^{m}\int_{0}^{t}\nabla f(Y^\alpha_N(\kappa_N(s)))\sigma_k\int_{\kappa_N(s)}^{s} \ud W_k(r) \ud s.
		\end{align}
		Then it follows from stochastic Fubini theorem that 
		\begin{align}
			&\ -N^{\alpha\wedge1}\sum_{k = 1}^{m}\int_{0}^{t}\nabla f(Y^\alpha_N(\kappa_N(s)))\sigma_k\int_{\kappa_N(s)}^{s} \ud W_k(r) \ud s\notag\\
			& = -N^{\alpha\wedge1}\sum_{k = 1}^{m}\int_{0}^{t}\int_{r}^{(\kappa_N(r) + T/N)\wedge t}\nabla f(Y^\alpha_N(\kappa_N(s)))\sigma_k \ud s \ud W_k(r)\notag\\
			& = J_1^{\alpha, N}(t) + \widetilde{R}_3^{\alpha, N}(t), 
		\end{align}
		where $\widetilde{R}_3^{\alpha, N}(t) := -N^{\alpha\wedge1}\sum_{k = 1}^{m}\int_{0}^{t}\nabla f(Y^\alpha_N(\kappa_N(r)))\sigma_k\big ((\kappa_N(r) + T/N)\wedge t-(\kappa_N(r) + \frac{T}{N})\big ) \ud W_k(r)$.
		It is clear that $\widetilde{R}_3^{\alpha, N}(t) = -N^{\alpha\wedge1}\sum_{k = 1}^{m}\int_{\kappa_N(t)}^{t}\nabla f(Y^\alpha_N(\kappa_N(r)))\sigma_k\big ( t-(\kappa_N(t) + \frac{T}{N})\big ) \ud W_k(r)$, due to the definition of $\kappa_N $. Similar to the estimate for $R_4^{\alpha, N}$ (see \eqref{R2}), we have $\mbf E\|\widetilde{R}_3^{\alpha, N}\|^2_{\mathbf{C}([0, T])} \le KN^{(2\alpha\wedge2)-\frac{9}{4}}$. We derive from \eqref{CTEMA}, stochastic Fubini theorem, and It\^o's formula that
		\begin{align} \label{YY4}
			&\ -N^{\alpha\wedge1}\int_{0}^{t}\frac{1}{2}\mathcal{D}^2 f(Y^\alpha_N(\kappa_N(s)))\big (Y^\alpha_N(\kappa_N(s))-Y^\alpha_N(s), Y^\alpha_N(\kappa_N(s))-Y^\alpha_N(s)\big ) \ud s\notag\\
			& = -N^{\alpha\wedge1}\int_{0}^{t}\frac{1}{2}\mathcal{D}^2 f(Y^\alpha_N(\kappa_N(s)))(f^\alpha(Y^\alpha_N(\kappa_N(s))), f^\alpha(Y^\alpha_N(\kappa_N(s))))(s-\kappa_N(s))^2 \ud s\notag\\
			&\quad-N^{\alpha\wedge1}\sum_{k = 1}^{m}\sum_{u = 1}^{m}\int_{0}^{t}\frac{1}{2}\mathcal{D}^2 f(Y^\alpha_N(\kappa_N(s)))(\sigma_u, \sigma_k)(W_k(s)-W_k(\kappa_N(s)))(W_u(s)-W_u(\kappa_N(s))) \ud s\notag\\
			&\quad-N^{\alpha\wedge1}\sum_{k = 1}^{m}\int_{0}^{t}\mathcal{D}^2 f(Y^\alpha_N(\kappa_N(s)))(f^\alpha(Y^\alpha_N(\kappa_N(s))), \sigma_k)(s-\kappa_N(s))(W_k(s)-W_k(\kappa_N(s))) \ud s\notag\\
			& = J_2^{\alpha, N}(t) + \widetilde{R}_4^{\alpha, N}(t), 
		\end{align}
		where
		\begin{align*}
			&	\widetilde{R}_4^{\alpha, N}(t) = -\frac{N^{\alpha\wedge1}}{2}\int_{0}^{t}\mathcal{D}^2 f(Y^\alpha_N(\kappa_N(s)))(f^\alpha(Y^\alpha_N(\kappa_N(s))), f^\alpha(Y^\alpha_N(\kappa_N(s))))(s-\kappa_N(s))^2 \ud s\\
			&-N^{\alpha\wedge1}\sum_{k = 1}^{m}\int_{0}^{t}\mathcal{D}^2 f(Y^\alpha_N(\kappa_N(s)))(f^\alpha(Y^\alpha_N(\kappa_N(s))), \sigma_k)(s-\kappa_N(s))(W_k(s)-W_k(\kappa_N(s))) \ud s\\
			&-N^{\alpha\wedge1}\sum_{1 \le k<u \le m}\int_{0}^{t}\int_{r}^{\kappa_N(r) + \frac{T}{N}}\mathcal{D}^2 f(Y^\alpha_N(\kappa_N(r)))(\sigma_u, \sigma_k)\big(W_k(s)-W_k(\kappa_N(r))\big ) \ud s \ud W_u(r)\\
			& + N^{\alpha\wedge1}\sum_{1 \le k<u \le m}(W_u(t)-W_u(\kappa_N(t)))\mathcal{D}^2 f(Y^\alpha_N(\kappa_N(t)))(\sigma_u, \sigma_k)\int_{t}^{\kappa_N(t) + \frac{T}{N}}\big(W_k(s)-W_k(\kappa_N(t))\big) \ud s\\
			&-N^{\alpha\wedge1}\sum_{k = 1}^{m}\int_{0}^{t}\mathcal{D}^2f(Y^\alpha_N(\kappa_N(r)))(\sigma_k, \sigma_k)\big(W_k(r)-W_k(\kappa_N(r))\big )(\kappa_N(r) + \frac{T}{N}-r) \ud W_k(r)\\
			&-\frac{N^{\alpha\wedge1}}{2}\sum_{k = 1}^{m}\mathcal{D}^2f(Y^\alpha_N(\kappa_N(t)))(\sigma_k, \sigma_k)(t-\kappa_N(t)-\frac{T}{N})\Big((W_k(t)-W_k(\kappa_N(t)))^2-(t-\kappa_N(t))\Big).
		\end{align*}
		Similar to the the proof of \eqref{R2} in Lemma \ref{UNexpression}, we have that $\mbf E\|\widetilde{R}_4^{\alpha, N}\|^2_{\mathbf{C}([0, T])} \le KN^{(2\alpha\wedge2)-\frac{9}{4}} $. 
		
		Combining \eqref{V0}--\eqref{YY4}, we get \eqref{YY0} with $\widetilde{R}^{\alpha, N}(t) = \sum_{i = 1}^{4}\widetilde{R}_i^{\alpha, N}(t)$. According to the previous estimation for $\widetilde{R}_i^{\alpha, N}$, $i = 1, 2, 3, 4$, we have $\lim_{N\to\infty}\mbf E\|\widetilde{R}^{\alpha, N}\|^2_{\mathbf{C}([0, T])} = 0 $. 
		Thus, the proof is complete.
	\end{proof}

	The following lemma gives the convergence of $J_2^{\alpha, N}$ and $J_3^{\alpha, N}$ in probability in $\mbf C([0, T];\R^d)$, which can not be directly obtained via \cite[Proposition 4.2]{HJWY24}.

	\begin{lem} \label{J2C}
		Let Assumptions \ref{assum3} and \ref{assum4} hold. Then $J_2^{\alpha, N}\overset{\mbf P}{\longrightarrow}J_2^{\alpha}$ and $J_3^{\alpha, N}\overset{\mbf P}{\longrightarrow}J_3^{\alpha}$ as $\mbf C([0, T];\mbb R^d)$-valued random variables as $N\to\infty$. Here, $J_{2}^{\alpha}$ and $J_{3}^{\alpha}$ are the stochastic processes respectively defined by
		\begin{align*}
			&J_{2}^{\alpha}(t) = \begin{cases}
				0, &\alpha\in(0, 1), \\
				-\frac{T}{4}\sum_{k = 1}^{m}\int_{0}^{t}\mathcal{D}^2 f(Y(s))(\sigma_k, \sigma_k) \ud s, &\alpha\in[1, \infty), 
			\end{cases}\\
			&J_{3}^{\alpha}(t) = \begin{cases}
				0, &\alpha\in(0, 1), \\
				-\frac{T}{2}\int_{0}^{t}\nabla f(Y(s))f(Y(s)) \ud s, &\alpha\in[1, \infty).
			\end{cases}
		\end{align*} 
	\end{lem}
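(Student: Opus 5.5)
Both $J_2^{\alpha,N}$ and $J_3^{\alpha,N}$ have the form
\[
\Phi^N(t):=N^{\alpha\wedge1}\int_0^t h(Y^\alpha_N(\kappa_N(s)))(s-\kappa_N(s))\,\ud s,
\]
with $h=-\frac12\sum_k\mathcal D^2f(\cdot)(\sigma_k,\sigma_k)$ for $J_2^{\alpha,N}$ and $h=-\nabla f\,f$ for $J_3^{\alpha,N}$. Under Assumptions \ref{assum3}--\ref{assum4} we have $h\in\mathbf C^1$ and $\nabla h\in\mbf F$. So I will treat a generic $\Phi^N$ and prove the stronger statement $\mbf E\|\Phi^N-\Phi\|_{\mbf C([0,T])}\to0$, where $\Phi=0$ if $\alpha<1$ and $\Phi(t)=\frac T2\int_0^t h(Y(s))\ud s$ if $\alpha\ge1$.

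\emph{Case $\alpha\in(0,1)$.} Since $0\le s-\kappa_N(s)\le T/N$,
\[
\|\Phi^N\|_{\mbf C([0,T])}\le TN^{\alpha-1}\int_0^T|h(Y^\alpha_N(\kappa_N(s)))|\ud s .
\]
The moment bound \eqref{AA1} and the polynomial growth of $h$ then give $\mbf E\|\Phi^N\|_{\mbf C([0,T])}\le KN^{\alpha-1}\to0$.

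\emph{Case $\alpha\ge1$.} Here $N^{\alpha\wedge1}=N$. On each cell $[t_j,t_{j+1}]$, $t_j=jT/N$, the integrand $h(Y^\alpha_N(t_j))$ is constant, and $N\int_{t_j}^{t_{j+1}}(s-t_j)\ud s=\frac{T}{2}\cdot\frac TN$. Hence
\[
\Phi^N(t)=\frac T2\int_0^{\kappa_N(t)}h(Y^\alpha_N(\kappa_N(s)))\ud s+\rho^N(t),
\qquad |\rho^N(t)|\le T\sup_{j}|h(Y^\alpha_N(t_j))|\cdot\frac TN .
\]
The boundary term $\rho^N$ comes from the last partial cell. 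I will bound $\mbf E\sup_j|h(Y^\alpha_N(t_j))|$ by $\big(\sum_j\mbf E|h(Y^\alpha_N(t_j))|^q\big)^{1/q}\le KN^{1/q}$, using \eqref{AA1} and the fact that all moments are available. Choosing $q>1$ then gives $\mbf E\|\rho^N\|_{\mbf C([0,T])}\le KN^{1/q-1}\to0$.

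It remains to show
\[
\mbf E\sup_t\Big|\int_0^{\kappa_N(t)}h(Y^\alpha_N(\kappa_N(s)))\ud s-\int_0^t h(Y(s))\ud s\Big|\to0 .
\]
The difference over $[\kappa_N(t),t]$ is handled by the same sup argument. The main part is bounded by
\[
\int_0^T\mbf E|h(Y^\alpha_N(\kappa_N(s)))-h(Y(s))|\ud s .
\]
I will control this with the mean value theorem, the polynomial growth of $\nabla h$, H\"older's inequality, Theorem \ref{TEMAconverge}, and the H\"older continuity $\mbf E|Y(s)-Y(\kappa_N(s))|^2\le KN^{-1}$, which follows from the equation for $Y$. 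This gives a bound $KN^{-1/2}\to0$.

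The main obstacle is the time-uniformity: the factor $N(s-\kappa_N(s))$ does not converge pointwise, and only its cell averages equal $T/2$. This forces the exact cell-by-cell averaging above, instead of a fixed-time $\mbf L^p$ argument, together with the careful bound on the boundary cell through the maximum over grid points. Convergence in $\mbf L^1(\Omega;\mbf C([0,T]))$ then implies the asserted convergence in probability in $\mbf C([0,T];\R^d)$.
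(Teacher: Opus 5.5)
Your proposal is correct, and it takes a genuinely different, more elementary route than the paper.

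\textbf{The paper's route.} It first replaces $Y^\alpha_N(\kappa_N(s))$ by $Y(s)$ to form $\widetilde J_2^N$, whose integrand is no longer constant on grid cells. It must then combine three ingredients:
\begin{itemize}
\item fixed-time convergence from \cite[Proposition 4.2]{HJWY24};
\item tightness in $\mbf C([0,T];\R^d)$, via Kolmogorov, Arzel\`a--Ascoli and \cite[Proposition 6.8]{da2014stochastic};
\item the Gy\"ongy--Krylov criterion \cite[Lemma 1.1]{gyongy2022existence}, applied through Skorohod representation.
\end{itemize}
This yields only convergence in probability, with no rate.

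\textbf{Your route.} Before any replacement, you use that $h(Y^\alpha_N(\kappa_N(s)))$ is constant on each cell. The identity $N\int_{t_j}^{t_{j+1}}(s-t_j)\,\ud s=\frac T2\cdot\frac TN$ then turns $J_2^{\alpha,N}$ and $J_3^{\alpha,N}$ exactly into $\frac T2$ times a Riemann sum, plus one partial-cell term.
\begin{itemize}
\item The partial-cell term is controlled by the maximal bound $\mbf E\max_j|h(Y^\alpha_N(t_j))|\le KN^{1/q}$.
\item The Riemann sum is compared with $\frac T2\int_0^t h(Y(s))\,\ud s$ using only Theorem \ref{TEMAconverge} and $\mbf E|Y(s)-Y(\kappa_N(s))|^2\le KN^{-1}$.
\end{itemize}

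\textbf{What each approach buys.} Your argument gives the stronger, quantitative conclusion $\mbf E\|J_i^{\alpha,N}-J_i^\alpha\|_{\mbf C([0,T])}\le KN^{-1/2}$ for $\alpha\ge1$ (and $KN^{\alpha-1}$ for $\alpha<1$). This is uniform-in-time convergence in mean, which the introduction suggests is currently out of reach. It also removes the reliance on external propositions. The paper's soft argument, in exchange, is more robust: it does not use the piecewise-constant structure of the integrand. It would therefore still apply if the oscillating factor $N(s-\kappa_N(s))$ multiplied a genuinely continuous-time integrand such as $h(Y(s))$.

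\textbf{One small point.} For the term $\int_{\kappa_N(t)}^t h(Y(s))\,\ud s$, the ``same sup argument'' has to be run over the cell integrals $\max_j\int_{t_j}^{t_{j+1}}|h(Y(s))|\,\ud s$, since $Y$ is not evaluated only at grid points. Alternatively, invoke $\mbf E\sup_{t}|Y(t)|^p<\infty$. Either version gives $KN^{1/q-1}$.
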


	\begin{proof}
		The proof of the convergence in probability of $J_3^{\alpha, N}$ in $\mbf C([0, T];\mbb R^d)$ is analogous to that of $J_2^{\alpha, N}$, and thus we only present the proof for the latter.
		Denote 
		$$ \widetilde{J}_{2}^{N}(t) = -\frac{1}{2}N\sum_{k = 1}^{m}\int_{0}^{t}\mathcal{D}^2 f(Y(s))(\sigma_k, \sigma_k)(s-\kappa_N(s)) \ud s, \quad t\in[0, T]. $$ 
		For $\alpha \geq1$, we derive from Theorem \ref{TEMAconverge} and the BDG inequality that for any $t\in[0, T]$, $\mbf E|Y(t)-Y^{\alpha}_N(\kappa_N(t))|^2 \le KN^{-1}$. 
		Thus, by H\"older's inequality, we have that for any $\alpha \geq1$, 
		\begin{align} \label{ABB}
			\lim_{N\to \infty}\mbf E\|\widetilde{J}_{2}^{N}-J_{2}^{\alpha, N}\|_{\mbf C([0, T])}^2 = 0, 
		\end{align}
		due to $\mathcal{D}^2f\in \mathbf{F} $ and \eqref{AA1}.

		By H\"older's inequality, for any $p \geq 1$, there exists $K>0$ independent of $N$ such that for $t$, $s\in [0, T]$, $\mbf E|\widetilde{J}_{2}^{N}(t)-\widetilde{J}_{2}^{N}(s)|^p \le K|t-s|^p $. Then it follows from Kolmogorov's continuity theorem that $\|[\widetilde{J}_{2}^{N}]_{\mbf C^{\frac{1}{2}}([0, T])}\|_{\mbf L^4(\Omega)} \le K$, where $K$ is independent of $N $. This implies that $\mbf E\|\widetilde{J}_{2}^{N}\|_{\mbf C([0, T])}^4 \le K $. Therefore, we have that
		\begin{align} \label{AAA}
			\mbf E\|\widetilde{J}_{2}^{N}\|_{\mbf C^\alpha([0, T])}^4 \le K, 
		\end{align}
		with $\alpha = \frac{1}{2}$, where $K$ is independent of $N $. Let $Q(\varphi) = \|\varphi\|_{\mbf C^{\frac{1}{2}}([0, T])}$, $\varphi\in \mbf C^{\frac{1}{2}}([0, T];\R^d)$. By the Arzel\`{a}--Ascoli theorem, one obtains that for any $a>0$, $K_a := \{\varphi\in \mbf C([0, T];\mbb R^d):Q(\varphi) \le a\}$ is compact in $\mbf C([0, T];\mbb R^d)$. Hence, $Q$ is the Lyapunov functional as in \cite[Proposition 6.8]{da2014stochastic}. By \eqref{AAA} and H\"older's inequality, we deduce that $\mbf E Q(\widetilde{J}_{2}^{N}) = \mbf E\|\widetilde{J}_{2}^{N}\|_{\mbf C^{\frac{1}{2}}([0, T])} \le K$, where $K$ is independent of $N $. Thus, it follows from \cite[Proposition 6.8]{da2014stochastic} that the distribution of $\{\widetilde{J}_{2}^{N}\}$, i.e., $\{\mbf P\circ(\widetilde{J}_{2}^{N})^{-1}\}$, is tight in $\mbf C([0, T];\mbb R^d)$.

Let $\{\widetilde{J}_{2}^{n}\}$ and $\{\widetilde{J}_{2}^{j}\}$ be two arbitrary subsequences of $\{\widetilde{J}_{2}^{N}\}$, and let $\{\widetilde{J}_{2}^{n_k}\}$ and $\{\widetilde{J}_{2}^{j_k}\}$ be their respective subsequences. Then the distribution of $\{(\widetilde{J}_{2}^{n_k}, \widetilde{J}_{2}^{j_k}, Y)\}_{k \geq1}$ is tight in $\mbf C([0, T];\R^d)^{\otimes 3} $. Thus, by Prokhorov's theorem, there exists a subsequence $\{(\widetilde{J}_{2}^{n_k'}, \widetilde{J}_{2}^{j_k'}, Y)\}_{k \geq1}$ of $\{(\widetilde{J}_{2}^{n_k}, \widetilde{J}_{2}^{j_k}, Y)\}_{k \geq1}$ converging in distribution to a random element $Z = (Z_1, Z_2, Z_3)$ taking values in $\mbf C([0, T];\R^d)^{\otimes 3} $. It follows from Skorohod's represent theorem that there exist a probability space $(\widehat{\Omega}, \widehat{\mathcal{F}}, \widehat{\mbf P})$ and random variables $(Z_1^k, Z_2^k, Y^k)$, $\widehat{Z}$ from $(\widehat{\Omega}, \widehat{\mathcal{F}}, \widehat{\mbf P})$ to $\mbf C([0, T];\R^d)^{\otimes 3}$ such that $(Z_1^k, Z_2^k, Y^k)$ converges to $\widehat{Z} = (\widehat{Z}_1, \widehat{Z}_2, \widehat{Z}_3)$ a.s. in $(\widehat{\Omega}, \widehat{\mathcal{F}}, \widehat{\mbf P})$ as $k\to \infty$ and for any $k \geq 1$, 
		\begin{align*}
			\widehat{\mbf P}\circ (Z_1^k, Z_2^k, Y^k)^{-1} = \mbf P\circ (\widetilde{J}_{2}^{n_k'}, \widetilde{J}_{2}^{j_k'}, Y)^{-1}, \quad \widehat{\mbf P}\circ (\widehat{Z}_1, \widehat{Z}_2, \widehat{Z}_3)^{-1} = \mbf P\circ (Z_1, Z_2, Z_3)^{-1}.
		\end{align*}
		Note that 
		\begin{align*}
			&Z_1^k(t) = -\frac{1}{2}n_k'\sum_{i = 1}^{m}\int_{0}^{t}\mathcal{D}^2 f(Y^k(s))(\sigma_i, \sigma_i)(s-\kappa_{n_k'}(s)) \ud s, \\
			&Z_2^k(t) = -\frac{1}{2}j_k'\sum_{i = 1}^{m}\int_{0}^{t}\mathcal{D}^2 f(Y^k(s))(\sigma_i, \sigma_i)(s-\kappa_{j_k'}(s)) \ud s.
		\end{align*}
		Due to $\widehat{\mbf P}\circ (Y^k)^{-1} = \mbf P\circ Y^{-1}$, it holds that $\mathcal{D}^2 f(Y^k)(\sigma_i, \sigma_i)$, $k=1,2,\cdots$,	 are uniformly integrable as $\mbf C([0, T];\mbb R^d)$-valued random variables in $(\widehat{\Omega}, \widehat{\mathcal{F}}, \widehat{\mbf P}) $. Since $Y^k$ converges to $\widehat{Z}_3$ a.s. as $\mbf C([0, T];\mbb R^d)$-valued random variables, we obtain that $\mathcal{D}^2 f(Y^k)(\sigma_i, \sigma_i)$ converges to $\mathcal{D}^2 f(\widehat{Z}_3)(\sigma_i, \sigma_i)$ in $\mbf L^2(\widehat{\Omega};\mbf C([0, T];\mbb R^d))$. Then \cite[Proposition 4.2]{HJWY24} yields that for any $t\in[0, T]$, 
		\begin{align*}
			Z_1^k(t) \longrightarrow -\frac{T}{4}\sum_{i = 1}^{m}\int_{0}^{t}\mathcal{D}^2 f(\widehat{Z}_3(s))(\sigma_i, \sigma_i) \ud s, \quad Z_2^k(t) \longrightarrow -\frac{T}{4}\sum_{i = 1}^{m}\int_{0}^{t}\mathcal{D}^2 f(\widehat{Z}_3(s))(\sigma_i, \sigma_i) \ud s, 
		\end{align*}
		in $\mbf L^2(\widehat{\Omega}, \widehat{\mathcal{F}}, \widehat{\mbf P};\R^d) $. Therefore it follows that for any $t\in[0, T]$, 
		\begin{align*}
			\widehat{Z}_1(t) = -\frac{T}{4}\sum_{i = 1}^{m}\int_{0}^{t}\mathcal{D}^2 f(\widehat{Z}_3(s))(\sigma_i, \sigma_i) \ud s = \widehat{Z}_2(t) \quad \text{ a.s.\ in }(\widehat{\Omega}, \widehat{\mathcal{F}}, \widehat{\mbf P}).
		\end{align*}
Noting that $\widehat{Z}_1$ and $\widehat{Z}_2$ have continuous sample paths, we have that $\widehat{Z}_1 = \widehat{Z}_2$ a.s.\ as $\mbf C([0, T];\mbb R^d)$-valued random variables in $(\widehat{\Omega}, \widehat{\mathcal{F}}, \widehat{\mbf P}) $. Hence, the limit $(Z_1, Z_2)$ of the sequence $\{(\widetilde{J}_{2}^{n_k'}, \widetilde{J}_{2}^{j_k'})\}_{k \geq1}$ is supported on the diagonal $\{(x, y)\in \mbf C([0, T];\R^d)^{\otimes 2}:x = y\}$, due to $\mbf P\circ (Z_1, Z_2)^{-1} = \widehat{\mbf P}\circ(\widehat{Z}_1, \widehat{Z}_2)^{-1} $. We deduce from \cite[Lemma 1.1]{gyongy2022existence} that $\widetilde{J}_2^N$ converges in probability to a $\mbf C([0, T];\mbb R^d)$-valued random variable. Applying \cite[Proposition 4.2]{HJWY24}, we have that for any $t\in[0, T]$, $\widetilde{J}_2^N(t)\overset{\mbf P}{\longrightarrow}J_2(t)$, where $J_2(t) = -\frac{T}{4}\sum_{i = 1}^{m}\int_{0}^{t}\mathcal{D}^2 f(Y(s))(\sigma_i, \sigma_i) \ud s $. Therefore $\widetilde{J}_2^N\overset{\mbf P}{\longrightarrow}J_2$ as $\mbf C([0, T];\mbb R^d)$-valued random variables. Consequently, combining \eqref{ABB}, we deduce that for $\alpha \geq1$, ${J}_2^{\alpha, N}\overset{\mbf P}{\longrightarrow}J_2$ as $\mbf C([0, T];\mbb R^d)$-valued random variables. Further, for $\alpha\in(0, 1)$, it can be shown that $\mbf E\|J_2^{\alpha, N}\|^2_{\mbf C([0, T])} \le KN^{2\alpha-2} $. Thus, the proof is complete.
	\end{proof}

With previous preparation, now we can give the limit distribution of $V^{\alpha, N}(t)$. 
	
	\begin{theo} \label{maintheorem2}
		Let Assumptions \ref{assum3} and \ref{assum4} hold. Then for any $t\in[0, T]$ and $\alpha>0$, we have that $V^{\alpha, N}(t) \overset{d}{\Longrightarrow} V^\alpha(t)$ as $N\to\infty$, and thus $N^{\alpha\wedge1}(\bar{Y}^\alpha_N-Y(T)) \overset{d}{\Longrightarrow} V^\alpha(T)$. Here, $V^\alpha = \{V^\alpha(t)\}_{t\in[0, T]}$ is the strong solution of the following equation
		\begin{align}
			V^\alpha(t)
			& = \int_0^t\nabla f(Y(s))V^{\alpha}(s) \ud s-\mathbbm 1_{ \{0<\alpha \le 1\}}T^{\alpha}\int_0^tf(Y(s))|Y(s)|^{l_\alpha} \ud s\notag \\
			&\quad-\mathbbm 1_{ \{\alpha \geq1\}}\frac{T}{2}\int_{0}^{t}\nabla f(Y(s))f(Y(s)) \ud s-\mathbbm 1_{ \{\alpha \geq1\}}\frac{T}{2}\int_{0}^{t}\nabla f(Y(s))\sigma \ud \mathbf{W}(s)\notag\\
			&\quad-\mathbbm 1_{ \{\alpha \geq 1\}}\frac{T}{4}\sum_{k = 1}^{m}\int_{0}^{t}\mathcal{D}^2 f(Y(s))(\sigma_k, \sigma_k) \ud s-\mathbbm 1_{ \{\alpha \geq 1\}}\frac{T}{\sqrt{12}}\int_{0}^{t}\nabla f(Y(s))\sigma \ud \widehat{\mathbf W}(s), 
		\end{align}
		where $l_\alpha = \lceil 2\alpha\rceil l$, and $\mbf {\widehat{W}} = (\widehat{W}_{1}, \widehat{W}_{2}, \ldots, \widehat{W}_{m})$ is an $m$-dimensional standard Brownian motion independent of $\mbf W$.
	\end{theo}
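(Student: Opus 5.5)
The plan is to mirror the proof of Theorem \ref{maintheorem1}, using the expansion \eqref{YY0} together with Lemma \ref{J2C}, a Jacod-type stable limit for $J_1^{\alpha,N}$, and \cite[Theorem 3.2]{HJWY24}. The additive-noise setting is simpler than the multiplicative one in one respect: the limiting equation has no stochastic integral against $V$. Consequently $V^{\alpha,N}$ solves a linear random ODE driven by $\vartheta^{\alpha,N}:=\sum_{i=0}^3 J_i^{\alpha,N}+\widetilde R^{\alpha,N}$ with coefficient $\nabla f(Y(s))$. We can therefore write
\[
V^{\alpha,N}(t)=\Phi(t)\int_0^t\Phi(s)^{-1}\,\mathrm d\vartheta^{\alpha,N}(s),
\]
where $\Phi$ is the fundamental matrix of $\dot\Phi=\nabla f(Y)\Phi$. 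Integrating by parts gives $V^{\alpha,N}(t)=\vartheta^{\alpha,N}(t)+\int_0^t\nabla f(Y(s))\Phi(t)\Phi(s)^{-1}\vartheta^{\alpha,N}(s)\,\mathrm ds$ (written with the resolvent), and this map $(Y,\vartheta)\mapsto V$ is continuous on $\mbf C([0,T];\R^d)^2$. So, unlike the multiplicative case, I would avoid the discretization step of Lemma \ref{A1}. It suffices to show $(Y,\vartheta^{\alpha,N})\overset{stably}{\Longrightarrow}(Y,\vartheta^\alpha)$ and then apply the continuous mapping theorem.

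\textbf{Step 1 (deterministic-limit terms).} For $J_0^{\alpha,N}$, I argue as in Lemma \ref{UN-UtildeN}, using Theorem \ref{TEMAconverge} and \eqref{AA1}. For $\alpha\le1$ it converges in $\mbf L^2(\Omega;\mbf C([0,T]))$ to $-T^\alpha\int_0^t f(Y)|Y|^{l_\alpha}\,\mathrm ds$. For $\alpha>1$ its prefactor $N^{1-\alpha}$ kills it. Lemma \ref{J2C} handles $J_2^{\alpha,N}$ and $J_3^{\alpha,N}$, and $\widetilde R^{\alpha,N}\to0$ by the preceding lemma. All of these converge in probability in $\mbf C([0,T];\R^d)$.

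\textbf{Step 2 (the martingale term $J_1^{\alpha,N}$).} This is the main step. For $\alpha<1$, the BDG inequality gives $\mbf E\|J_1^{\alpha,N}\|^2_{\mbf C}\le KN^{2\alpha-2}\to0$. For $\alpha\ge1$ I would apply \cite[Theorem 4-1]{Jacod97} exactly as in Lemma \ref{IN41}. The cross variation is
\[
\langle J_1^{\alpha,N,i},W_k\rangle_t=-N\int_0^t(\nabla f(Y^\alpha_N(\kappa_N(s)))\sigma_k)^i(\kappa_N(s)+T/N-s)\,\mathrm ds .
\]
Since $N(\kappa_N(s)+T/N-s)$ has average $T/2$ over each cell, \cite[Proposition 4.2]{HJWY24} (after replacing $Y^\alpha_N(\kappa_N(s))$ by $Y(s)$ via Theorem \ref{TEMAconverge}) gives convergence to $-\tfrac T2\int_0^t(\nabla f(Y)\sigma_k)^i\,\mathrm ds$. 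Likewise, $\langle J_1^{\alpha,N,i},J_1^{\alpha,N,j}\rangle_t$ converges to $\tfrac{T^2}{3}\sum_k\int_0^t(\nabla f(Y)\sigma_k)^i(\nabla f(Y)\sigma_k)^j\,\mathrm ds$, because the mean of $u^2$ is $1/3$. Jacod's theorem and \cite[Proposition 1-4]{Jacod97} then identify the stable limit as
\[
-\tfrac T2\int_0^t\nabla f(Y)\sigma\,\mathrm d\mbf W+c\int_0^t\nabla f(Y)\sigma\,\mathrm d\widehat{\mbf W},
\]
with $c^2=\tfrac{T^2}3-\tfrac{T^2}4=\tfrac{T^2}{12}$, i.e.\ $c=T/\sqrt{12}$. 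I expect this identification of the quadratic-variation limits, together with the orthogonal decomposition, to be the main obstacle. In particular, for the unbounded integrand I need uniform moment bounds from \eqref{AA1}.

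\textbf{Step 3 (conclusion).} Combining Steps 1--2 with Proposition \ref{Pro1} and Proposition \ref{Pro2}, we get $(Y,\vartheta^{\alpha,N})\overset{stably}{\Longrightarrow}(Y,\vartheta^\alpha)$ in $\mbf C([0,T];\R^d)^2$. The continuous solution map above then yields $V^{\alpha,N}\overset{d}{\Longrightarrow}V^\alpha$ in $\mbf C([0,T];\R^d)$, and in particular at each $t$ and at $t=T$. The limit $V^\alpha$ solves the displayed equation, and the indicator structure in that equation comes from the case split $\alpha<1$, $\alpha=1$, $\alpha>1$ in Steps 1--2.
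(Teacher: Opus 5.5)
Your proposal is correct. For the forcing terms it matches the paper: $J_0^{\alpha,N}$ is handled as in Lemma \ref{UN-UtildeN}, $J_2^{\alpha,N}$ and $J_3^{\alpha,N}$ by Lemma \ref{J2C}, and $J_1^{\alpha,N}$ by \cite[Theorem 4-1]{Jacod97}. The bracket limits you use, $-\frac T2\int_0^t(\nabla f(Y(s))\sigma_j)^i\,\ud s$ and $\frac{T^2}{3}\sum_k\int_0^t(\nabla f(Y(s))\sigma_k)^i(\nabla f(Y(s))\sigma_k)^j\,\ud s$, are exactly the paper's, and they give the coefficient $T/\sqrt{12}$ (its sign is immaterial).

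You genuinely depart from the paper in passing from the forcing to $V^{\alpha,N}$. The paper says it ``follows the argument for proving Lemma \ref{A1} and Theorem \ref{maintheorem1}''. That means re-discretizing $(Y,V^{\alpha,N})$ by a tamed Euler scheme, proving an approximation bound uniform in $N$ (moment bounds plus the Gy\"ongy--Krylov stopping-time lemma), and invoking \cite[Theorem 3.2]{HJWY24}. That machinery was built for the multiplicative case, where the solution map contains $\ud W_k$-integrals of $U^{\alpha,N}$ and is not continuous.

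You observe instead that with additive noise $V^{\alpha,N}$ solves the linear random ODE $V=\int_0^\cdot\nabla f(Y(s))V(s)\,\ud s+\vartheta^{\alpha,N}$. Its solution map $(Y,\vartheta)\mapsto V$ is continuous on $\mbf C([0,T];\R^d)^2$: if $Y_n\to Y$ uniformly, then $\nabla f(Y_n)\to\nabla f(Y)$ uniformly, and Gr\"onwall gives $V_n\to V$ uniformly. So the continuous mapping theorem applied to $(Y,\vartheta^{\alpha,N})\overset{stably}{\Longrightarrow}(Y,\vartheta^\alpha)$ suffices. Your route is shorter, needs only continuity of $\nabla f$ at this step, and gives convergence of the whole process in $\mbf C([0,T];\R^d)$. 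The paper's route buys only uniformity of presentation with Section \ref{Sec4}.

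Two small repairs are needed. First, in your resolvent formula the factors must be ordered $\Phi(t)\Phi(s)^{-1}\nabla f(Y(s))\vartheta(s)$, since the matrices need not commute. Also, $\Phi(t)\int_0^t\Phi(s)^{-1}\,\ud\vartheta(s)$ is only formal, because $\vartheta^{\alpha,N}$ is not of bounded variation. Define the map either through the integrated-by-parts formula (one checks directly that it solves the equation) or through the Gr\"onwall argument.

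Second, for the joint stable convergence, apply Proposition \ref{Pro1} to $J_1^{\alpha,N}$ together with the $\mathcal F$-measurable pair $(Y,J_0^\alpha+J_2^\alpha+J_3^\alpha)$. Then add the last two components via a continuous map, and conclude with Proposition \ref{Pro2}.
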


	\begin{proof}
		Similar to Lemma \ref{UN-UtildeN}, we can derive that for $\alpha>0$, $J_0^{\alpha, N}$ converges to $J_0^\alpha$ as $\mbf C([0, T];\mbb R^d)$-valued random variables in $\mbf L^2(\Omega, \mathcal{F}, \mbf P;\R^d)$ with 
		\begin{align*}
			J_0^\alpha(t) = \begin{cases}
				-T^{\alpha}\int_0^tf(Y(s))|Y(s)|^{l_\alpha} \ud s, &\alpha\in(0, 1], \\
				0, &\alpha\in (1, + \infty).
			\end{cases}
		\end{align*} 
		Denote $J^N(t) = -T\sum_{k = 1}^{m}\int_{0}^{t}\nabla f(Y^\alpha_N(\kappa_N(s)))\sigma_k(\lfloor\frac{Ns}{T}\rfloor + 1-\frac{Ns}{T}) \ud W_k(s)$, $t\in[0, T] $. Let $J^{N, i}$ and $(\nabla f(Y(s))\sigma_k)^i$ denote the $i$th component of $J^N$ and $\nabla f(Y(s))\sigma_k$ respectively, $i = 1, \ldots, d$, $k = 1, \ldots, m $. Applying \cite[Proposition 4.2]{HJWY24} yields that for any $t\in [0, T]$, $\LL J^{N, i}, W_j\RR_t$ converges to $-\frac{T}{2}\int_{0}^{t}(\nabla f(Y(s))\sigma_j)^i \ud s$ in $\mbf L^2(\Omega, \mathcal{F}, \mbf P;\R^d)$, $i = 1, \ldots, d$, $j = 1, \ldots, m $. Similar to the proof of \eqref{cross2}, we derive that for $i$, $j = 1, \ldots, d$, 
		\begin{align*}
			\LL J^{N, i}, J^{N, j}\RR_t \longrightarrow \frac{T^2}{3}\sum_{k = 1}^{m}\int_{0}^{t}\big(\nabla f(Y(s))\sigma_k\big )^i\big(\nabla f(Y(s))\sigma_k\big )^j \ud s, \quad \text{ in $\mbf L^2(\Omega, \mathcal{F}, \mbf P) $. }
		\end{align*}
		Further, following the ideas for proving Lemma \ref{IN41}, it can be shown that $J^{N}\overset{stably}{\Longrightarrow}J$ as $\mbf C([0, T];\mbb R^d)$-valued random variables as $N\to\infty$, where
		\begin{align*}
			J(t) := -\frac{T}{2}\int_{0}^{t}\nabla f(Y(s))\sigma \ud \mathbf{W}(s)-\frac{T}{\sqrt{12}}\int_{0}^{t}\nabla f(Y(s))\sigma \ud \widehat{\mathbf W}(s), \quad \text{ $t\in[0, T] $. }
		\end{align*}
		Further, for $\alpha\in(0, 1)$, one can obtain that $\mbf E\|J_1^{\alpha, N}\|_{\mbf C([0, T])}^2 \le KN^{2\alpha-2} $. Therefore, $J_1^{\alpha, N}\overset{stably}{\Longrightarrow}J_1^\alpha$ as $\mbf C([0, T];\mbb R^d)$-valued random variables as $N\to\infty$, where
		\begin{align*}
			J_{1}^{\alpha}(t) = \begin{cases}
				0, &\alpha\in(0, 1), \\
				J(t), &\alpha\in[1, \infty).
			\end{cases}
		\end{align*}
		Following the argument for proving Lemma \ref{A1} and Theorem \ref{maintheorem1}, and using Lemma \ref{J2C}, we finally complete the proof.
	\end{proof}

	\begin{cor} \label{Cor2}
		Let Assumptions \ref{assum3} and \ref{assum4} hold. Then there exist $C_4$, $C_5$, $C_6>0$ independent of T such that for $\alpha \geq 1$ and $t\in[0, T]$, 
		\begin{align}
			\mbf E|V^\alpha(t)|^2 \le (C_4 + C_5\mathbbm1_{\{\alpha = 1\}})e^{C_6T}T^3.
		\end{align}
	\end{cor}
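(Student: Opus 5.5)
The proof will follow Corollary \ref{Cor1}: apply It\^o's formula to $|V^\alpha(t)|^2$ via the limit equation of Theorem \ref{maintheorem2}, and close with Gr\"onwall's inequality. Throughout, $C$ denotes a generic constant independent of $T$.

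For $\alpha\ge 1$, the term $-\mathbbm 1_{\{0<\alpha\le1\}}T^\alpha\int_0^t f(Y)|Y|^{l_\alpha}\ud s$ is present only when $\alpha=1$, where it equals $-T\int_0^t f(Y)|Y|^{2l}\ud s$. The remaining inhomogeneities are:
\begin{itemize}
\item the drift $-\frac{T}{2}\nabla f(Y)f(Y)-\frac{T}{4}\sum_k\mathcal D^2f(Y)(\sigma_k,\sigma_k)$;
\item the martingale terms $-\frac T2\nabla f(Y)\sigma\,\ud\mbf W$ and $-\frac{T}{\sqrt{12}}\nabla f(Y)\sigma\,\ud\widehat{\mbf W}$.
\end{itemize}
Since $\widehat{\mbf W}$ is independent of $\mbf W$, It\^o's formula gives
\begin{align*}
\mbf E|V^\alpha(t)|^2&=\mbf E\int_0^t\Big[2\LL V^\alpha,\nabla f(Y)V^\alpha\RR+2\LL V^\alpha,D(s)\RR+\Big(\frac{T^2}{4}+\frac{T^2}{12}\Big)\sum_{k=1}^m|\nabla f(Y)\sigma_k|^2\Big]\ud s,
\end{align*}
where $D(s)$ collects all the drift inhomogeneities.

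The main terms are estimated as follows.
\begin{itemize}
\item \emph{Linear term.} The one-sided Lipschitz condition (B-1) gives $\LL x,\nabla f(y)x\RR\le \frac{L_2}{2}|x|^2$, obtained by differentiating (B-1) exactly as in \eqref{genbounded}.
\item \emph{Cross term.} By Young's inequality, $2\LL V,D\RR\le |V|^2+|D|^2$.
\item \emph{Size of $D$.} We have $|D|^2\le C\,\mathbbm 1_{\{\alpha=1\}}T^2|f(Y)|^2|Y|^{4l}+CT^2\big(|\nabla f(Y)f(Y)|^2+|\mathcal D^2f(Y)|^2\big)$.
\end{itemize}
Since $f$, $\nabla f$ and $\mathcal D^2 f$ grow polynomially, each of these is bounded by $CT^2(1+|Y|^\iota)$ for some $\iota>0$. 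The moment bound for $Y$ in the Remark after Assumption \ref{assum3} then gives $\sup_{s\le T}\mbf E|Y(s)|^\iota\le Ce^{CT}$, where the constant depends on $\mbf E|Y_0|^\iota$ but not on $T$. The martingale contribution is bounded the same way by $CT^2\int_0^t(1+\mbf E|Y|^{\iota})\ud s$.

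Collecting these estimates yields
\begin{align*}
\mbf E|V^\alpha(t)|^2\le C\int_0^t\mbf E|V^\alpha(s)|^2\ud s+\big(C+C\mathbbm 1_{\{\alpha=1\}}\big)e^{CT}T^2\cdot t.
\end{align*}
Since $t\le T$, the inhomogeneous part is at most $(C_4+C_5\mathbbm 1_{\{\alpha=1\}})e^{CT}T^3$. Gr\"onwall's inequality contributes a further factor $e^{CT}$, which is absorbed into $e^{C_6T}$.

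The only delicate point is bookkeeping. Every constant must be kept independent of $T$, and it is the explicit factor $T$ on each inhomogeneity, together with integration over $[0,t]$, that produces the extra power $T^3$, compared with $T^2$ in Corollary \ref{Cor1}. Two specific checks are needed. First, the moment bound must have the form $C(p)e^{C(p)T}$. Second, the growth exponent $\iota$ must be fixed by the growth of $f$ and its derivatives alone, which holds under Assumption \ref{assum3} and Assumption \ref{assum4}.
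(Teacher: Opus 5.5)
Your proposal is correct and is the argument the paper intends. The paper gives no separate proof of Corollary \ref{Cor2}, and your steps reproduce the proof of Corollary \ref{Cor1}: It\^o's formula on the limit equation of Theorem \ref{maintheorem2}, the bound $2\LL x,\nabla f(y)x\RR\le L_2|x|^2$ from (B-1), Young's inequality, polynomial growth with the $Ce^{CT}$ moment bound for $Y$, and Gr\"onwall, where the squared $O(T)$ coefficients account for the extra power of $T$. The only point left implicit, as in the paper, is that the stochastic integrals have zero mean; a routine stopping-time localization followed by Fatou's lemma settles this.
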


	\begin{rem} \label{RB}
		As noted in Remark \ref{RA}, Theorem \ref{maintheorem2} and Corollary \ref{Cor2} indicate that $\alpha$ is the key parameter reflecting the growth rate of the mean-square error for the tamed Euler method \eqref{TEMA}. In addition, we infer that for the tamed Euler method \eqref{TEMA} of strong order $1$, the one with $\alpha = 1$ has the largest mean-square error after a long time.
	\end{rem}

	\section{Numerical experiments} \label{Sec5}
	In this section, we perform numerical experiments to validate our theoretical analysis. We verify the strong convergence order of the tamed Euler method for both multiplicative and additive noise and verify Corollaries \ref{Cor1} and \ref{Cor2} by presenting the evolution of the mean-square error of tamed Euler methods w.r.t.\ time.

	\subsection{Numerical experiments for multiplicative noise}
	
	\begin{ex}
		Consider the following SDE:
		\begin{align} \label{SDEE1}
			\begin{cases}
				 \ud X(t) = -\big (X(t)\big )^5 \ud t + X(t) \ud W_1(t) + \big (X(t)\big )^2 \ud W_2(t), \qquad t\in(0, T], \\
				X(0) = X_0\in\mbb R, 
			\end{cases}
		\end{align}
		where $W_1$ and $W_2$ are two independent one-dimensional standard Brownian motions.
	\end{ex}

	In this experiment, we consider the tamed Euler methods \eqref{TEM} with $\alpha = 0.2$, $0.4$, $0.5$, and $1$ for approximating \eqref{SDEE1}. It is clear that Assumption \ref{assum1} is satisfied by the coefficients of \eqref{SDEE1} with $l = 4$. First, we test the mean-square convergence order for the above tamed Euler methods. We set $X_0 = 1$ and $T = 1 $. The exact solution $X(T)$ is approximated by the tamed Euler methods with the small step-size $h = 2^{-16} $. The expectation is obtained based on the Monte Carlo method with 1000 sample paths. One can observe in Figure \ref{F1} that the tamed Euler method \eqref{TEM} has mean-square convergence of order $\alpha\wedge\frac{1}{2}$.

	\begin{figure}[H]
		\centering
		\subfigure[]{\includegraphics[width = 0.48\textwidth]{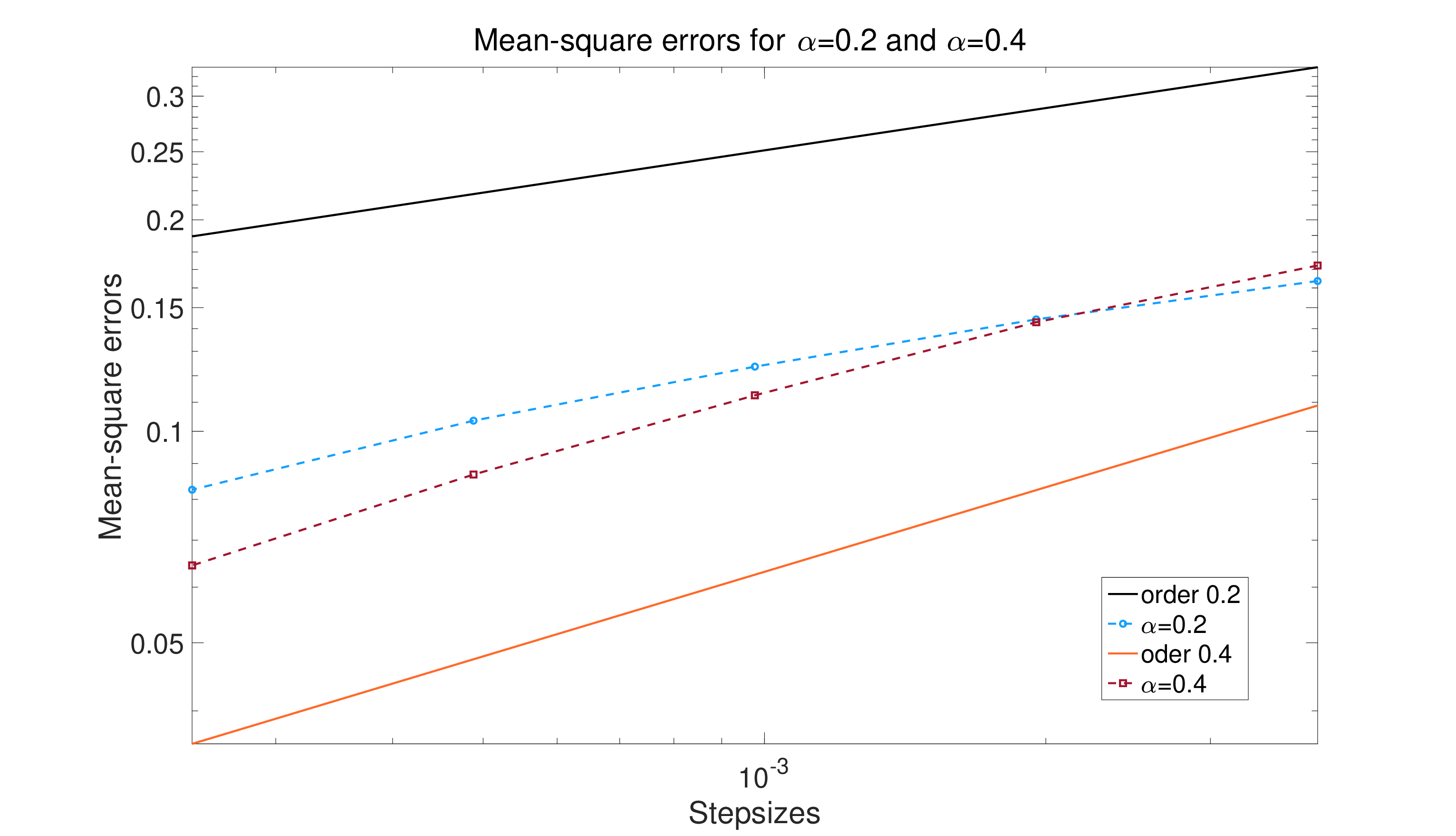}}
		\subfigure[]{\includegraphics[width = 0.48\textwidth]{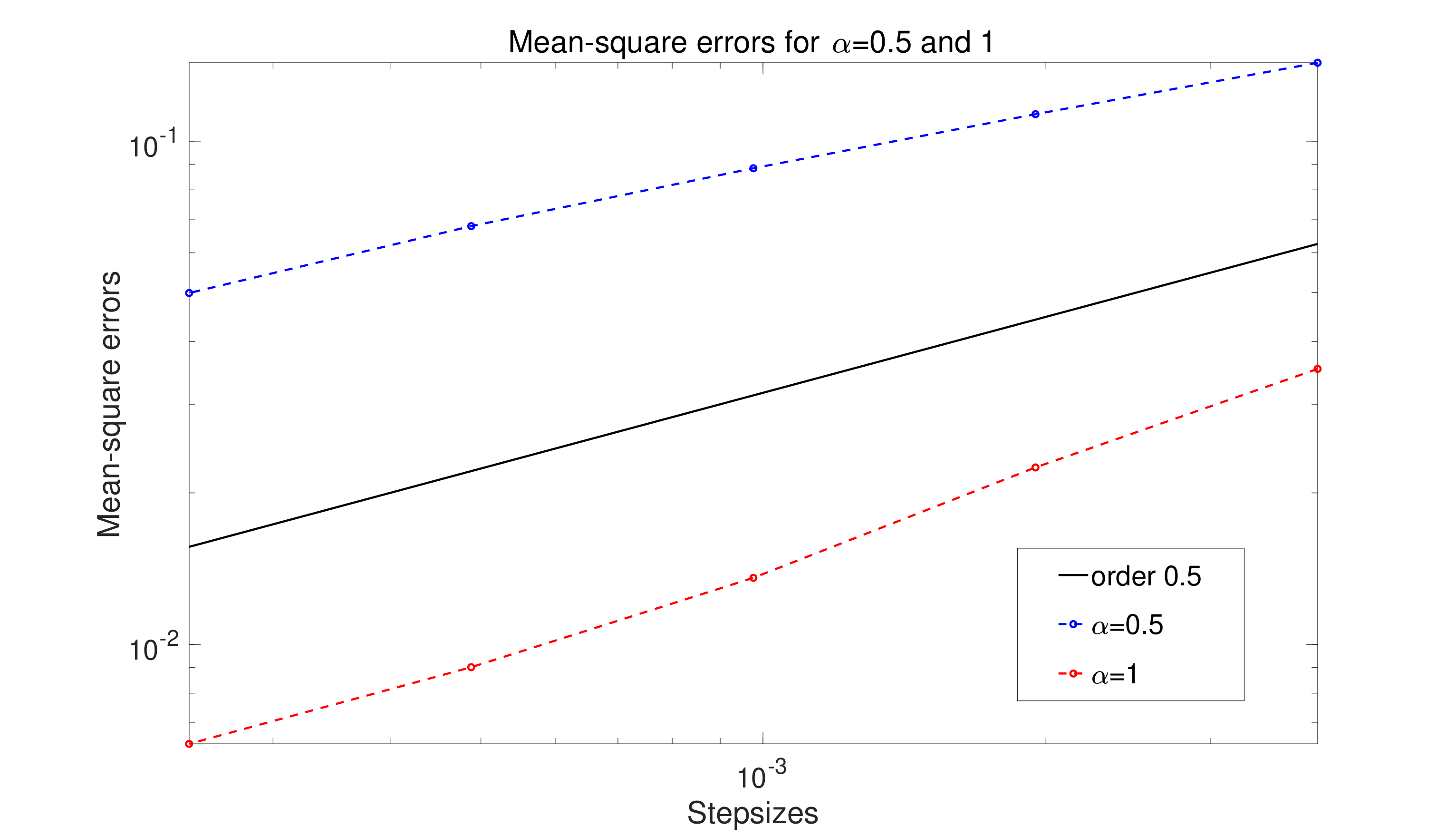}}
		\caption{Mean square errors for tamed Euler methods \eqref{TEM} with $\alpha = 0.2$, $0.4$, $0.5$, and $1$ applied to \eqref{SDEE1} in the log-log scale for five different step-sizes $h = 2^{-8}, 2^{-9}, 2^{-10}, 2^{-11}, 2^{-12}$.} \label{F1}
	\end{figure}

	Then we test the evolution of mean-square errors of tamed Euler methods \eqref{TEM} with $\alpha = 0.5$, $0.7$, $0.8$, and $1$ w.r.t.\  the time $t $. The exact solution is approximated by the tamed Euler methods with the small step-size $h = 10^{-4} $. We fix $X_0 = 1$ and use 1000 sample paths to approximate the expectation. As is shown in Figure \ref{F3}, the tamed Euler method \eqref{TEM} with $\alpha = 0.5$ has the largest mean-square error among these four methods after a long time. This verifies Corollary \ref{Cor1} and Remark \ref{RA}. Additionally, as the value of $\alpha$ increases, the corresponding mean-square error decreases accordingly.

	\begin{figure}[H]
		\centering
		\subfigure[]{\includegraphics[width = 0.48\textwidth]{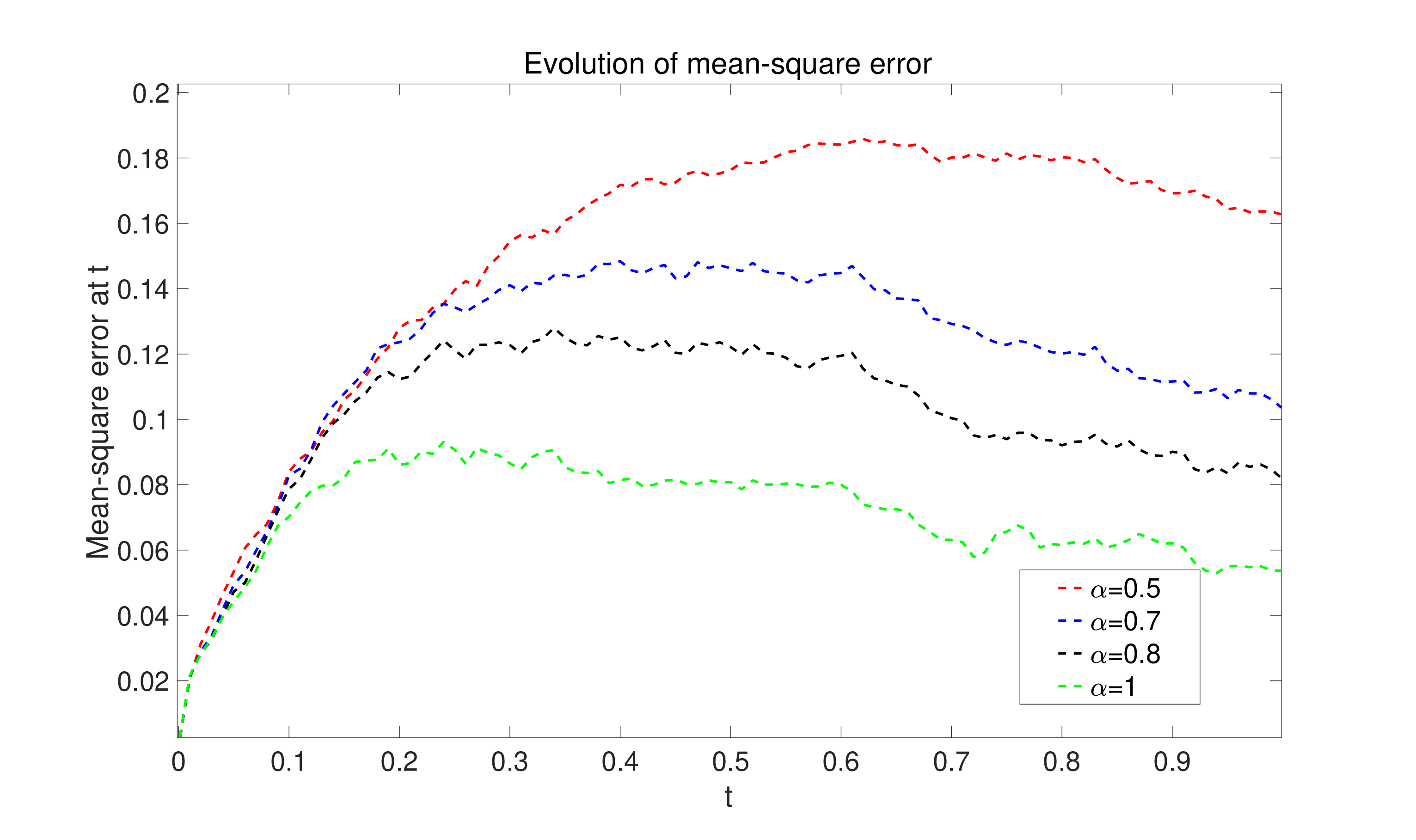}}
		\subfigure[]{\includegraphics[width = 0.48\textwidth]{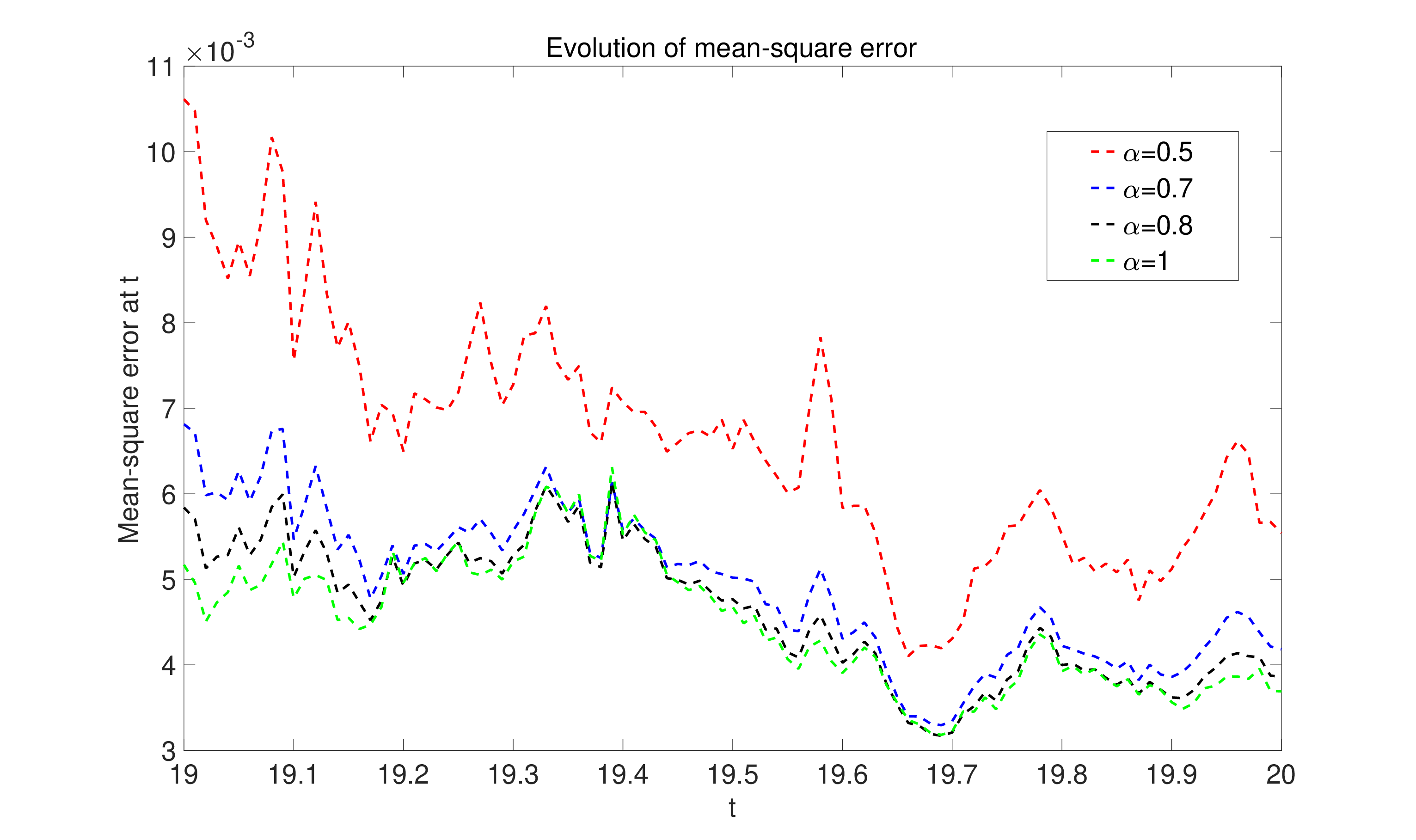}}
		\caption{Evolution of mean-square errors for tamed Euler methods \eqref{TEM} with $\alpha = 0.5$, $0.7$, $0.8$, and $1$ applied to \eqref{SDEE1} with $h = 10^{-2}$.} \label{F3}
	\end{figure}

	\subsection{Numerical experiments for additive noise}
	
	\begin{ex}
		Consider the following SDE:
		\begin{align} \label{SDEE2}
			\begin{cases}
				 \ud Y(t) = -\big (Y(t)\big )^3 \ud t + \sigma \ud W(t), \qquad t\in(0, T], \\
				Y(0) = Y_0\in\mbb R, 
			\end{cases}
		\end{align}
		where $W$ is a one-dimensional standard Brownian motion, and $\sigma$ is a given constant.
	\end{ex}

	In this experiment for additive noise, we adopt the tamed Euler method \eqref{TEMA} with $\alpha = 0.2$, $0.7$, $1$, $1.5$, and $2$ to approximate the solution of \eqref{SDEE2}. It can be shown that Assumption \ref{assum3} is fulfilled by the coefficients of \eqref{SDEE2} with $l = 2$. We test the mean-square convergence orders of these methods by setting $Y_0 = 1$, $T = 1$, and $\sigma = 1 $. The exact solution $Y(T)$ is approximated by the respective tamed Euler methods using the small step-size $h = 2^{-14} $. The expectation is obtained based on the Monte Carlo method with 1000 sample paths. As illustrated in Figure \ref{F2}, the tamed Euler method \eqref{TEMA} achieves a mean-square convergence order of $\alpha\wedge1$. 
	\begin{figure}[H]
		\centering
		\subfigure[]{\includegraphics[width = 0.48\textwidth]{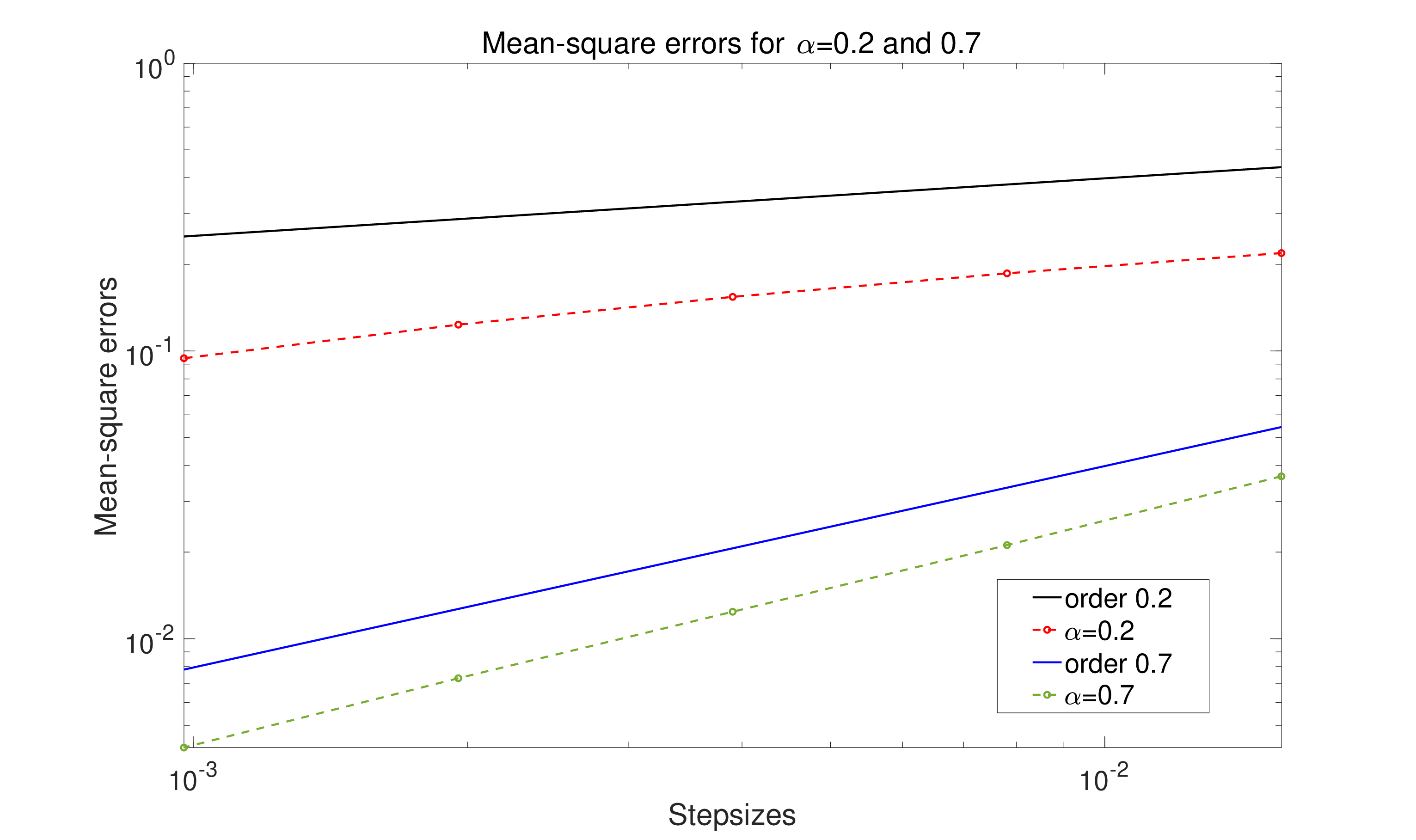}}
		\subfigure[]{\includegraphics[width = 0.48\textwidth]{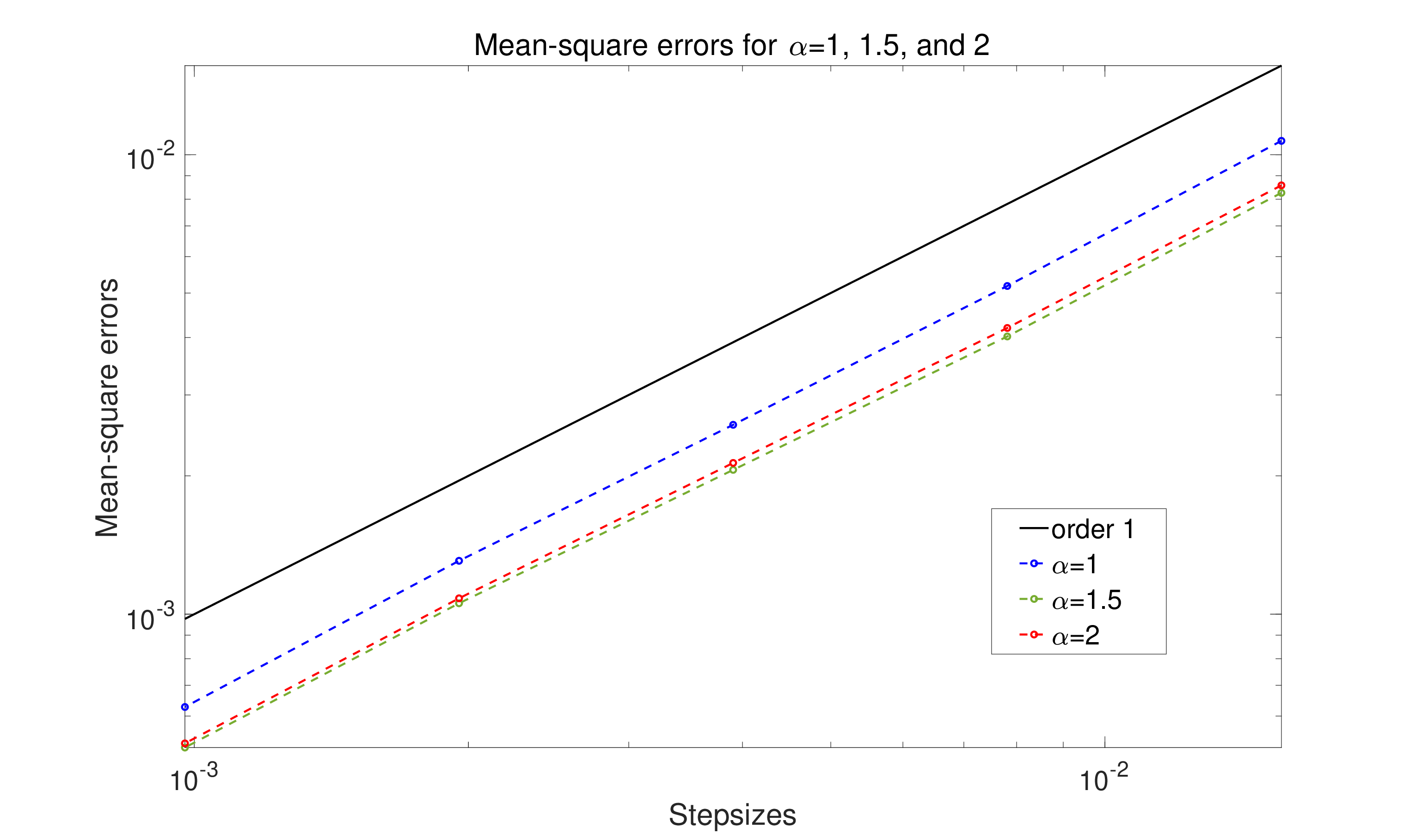}}
		\caption{Mean square errors for tamed Euler methods \eqref{TEMA} with $\alpha = 0.2$, $0.7$, $1$, $1.5$, and $2$ applied to \eqref{SDEE2} in the log-log scale for five different step-sizes $h = 2^{-6}, 2^{-7}, 2^{-8}, 2^{-9}, 2^{-10}$.} \label{F2}
	\end{figure}
	Finally, we test the evolution of mean-square errors of tamed Euler methods \eqref{TEMA} with $\alpha = 1$, $1.5$, $2$, and $2.5$ w.r.t.\  the time $t $. The exact solution is approximated by the respective tamed Euler methods with the small step-size $h = 10^{-4} $. We take $Y_0 = 1$ and $\sigma = 1 $. The expectation is computed by the Monte Carlo method with 1000 sample paths. Figure \ref{F4} reveals that the tamed Euler method \eqref{TEMA} with $\alpha = 1$ has the largest mean-square error after a long time, which verifies Corollary \ref{Cor2} and Remark \ref{RB}.

	\begin{figure}[H]
		\centering
		\subfigure[]{\includegraphics[width = 0.48\textwidth]{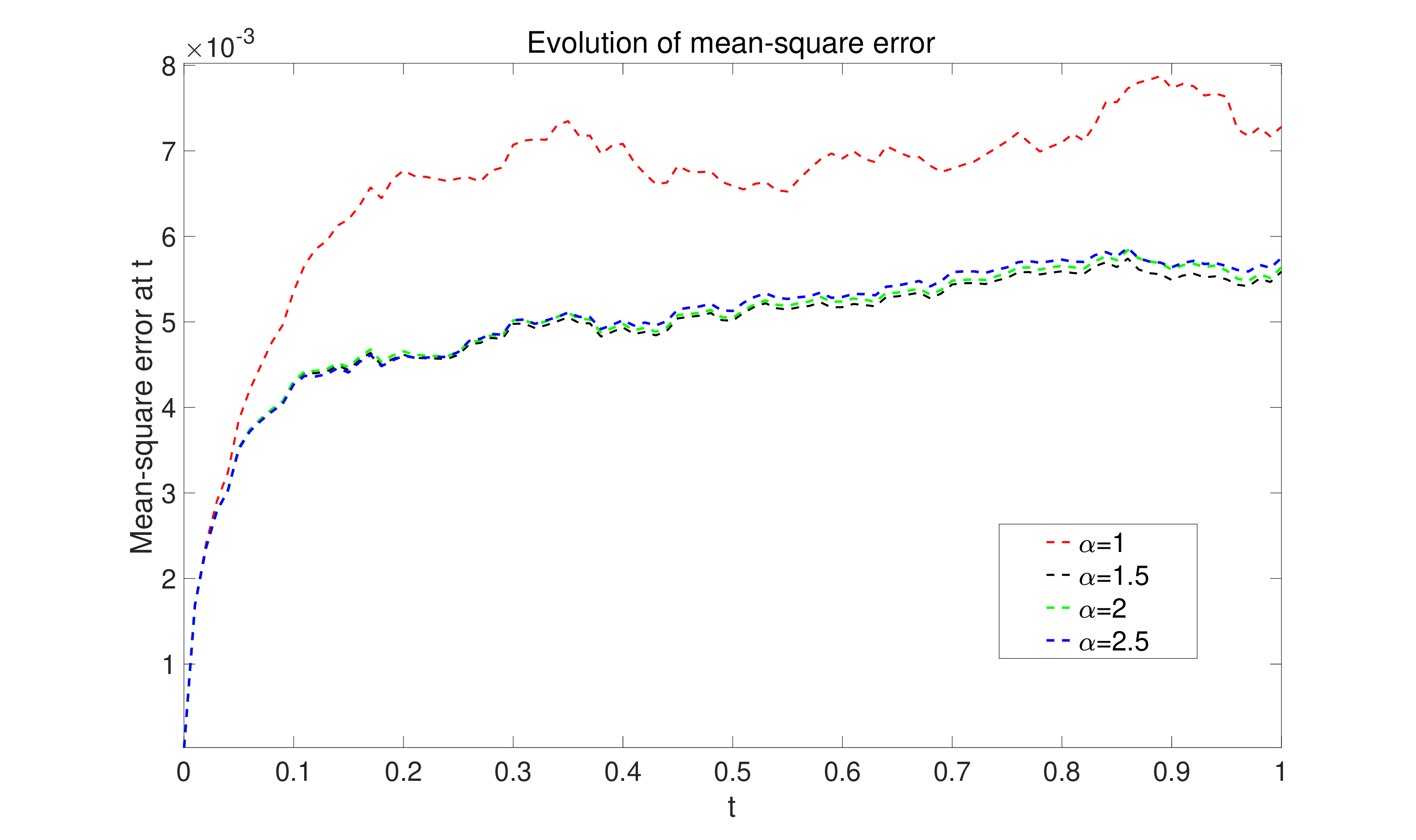}}
		\subfigure[]{\includegraphics[width = 0.48\textwidth]{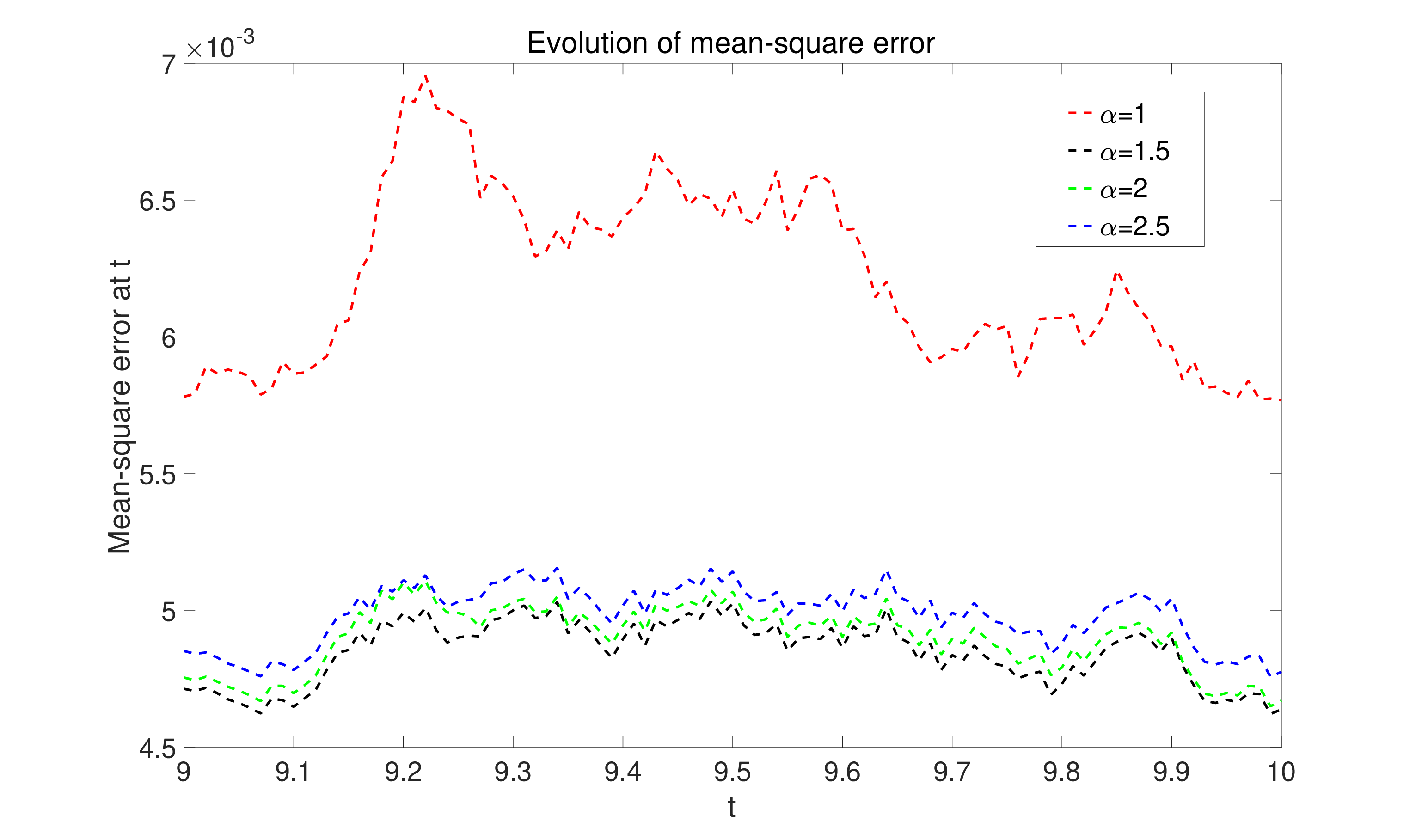}}
		\caption{Evolution of mean-square errors for tamed Euler methods \eqref{TEMA} with $\alpha = 1$, $1.5$, $2$, and $2.5$ applied to \eqref{SDEE2} with $h = 10^{-2}$.}
		 \label{F4}
	\end{figure}

	\bibliographystyle{plain}
	\bibliography{mybibfile}

@article {Sabanis13,
	AUTHOR = {Sabanis, S.},
	TITLE = {A note on tamed {E}uler approximations},
	JOURNAL = {Electron. Commun. Probab.},
	FJOURNAL = {Electronic Communications in Probability},
	VOLUME = {18},
	YEAR = {2013},
	PAGES = {no. 47, 10},
	ISSN = {1083-589X},
	MRCLASS = {60H35},
	MRNUMBER = {3070913},
	DOI = {10.1214/ECP.v18-2824},
	URL = {https://doi.org/10.1214/ECP.v18-2824},
}

@article {Bouleau,
	AUTHOR = {Bouleau, N.},
	TITLE = {When and how an error yields a {D}irichlet form},
	JOURNAL = {J. Funct. Anal.},
	FJOURNAL = {Journal of Functional Analysis},
	VOLUME = {240},
	YEAR = {2006},
	NUMBER = {2},
	PAGES = {445--494},
	ISSN = {0022-1236},
	MRCLASS = {60J45},
	MRNUMBER = {2261691},
	DOI = {10.1016/j.jfa.2006.03.007},
	URL = {https://doi.org/10.1016/j.jfa.2006.03.007},
}

@article {CLTMonte15,
	AUTHOR = {Ben Alaya, M. and Kebaier, A.},
	TITLE = {Central limit theorem for the multilevel {M}onte {C}arlo
	{E}uler method},
	JOURNAL = {Ann. Appl. Probab.},
	FJOURNAL = {The Annals of Applied Probability},
	VOLUME = {25},
	YEAR = {2015},
	NUMBER = {1},
	PAGES = {211--234},
	ISSN = {1050-5164},
	MRCLASS = {60F05 (60H35 62F12 65C05)},
	MRNUMBER = {3297771},
	MRREVIEWER = {Charles-Edouard Br\'{e}hier},
	DOI = {10.1214/13-AAP993},
	URL = {https://doi.org/10.1214/13-AAP993},
}

@article {MAM,
	AUTHOR = {Hong, J. and Jin, D. and Sheng, D.},
	TITLE = {Convergence analysis for minimum action methods coupled with a
	finite difference method},
	JOURNAL = {IMA J. Numer. Anal.},
	FJOURNAL = {IMA Journal of Numerical Analysis},
	VOLUME = {45},
	YEAR = {2025},
	NUMBER = {3},
	PAGES = {1501--1536},
	ISSN = {0272-4979,1464-3642},
	MRCLASS = {65M12 (65C30 65M06)},
	MRNUMBER = {4917678},
	DOI = {10.1093/imanum/drae038},
	URL = {https://doi.org/10.1093/imanum/drae038},
}

@book {Maoxuerong,
	AUTHOR = {Mao, X.},
	TITLE = {Stochastic {D}ifferential {E}quations and {A}pplications},
	EDITION = {Second},
	PUBLISHER = {Horwood Publishing Limited, Chichester},
	YEAR = {2008},
	PAGES = {xviii+422},
	ISBN = {978-1-904275-34-3},
	MRCLASS = {60-02 (34F05 34K50 60H10 60H30 91B28)},
	MRNUMBER = {2380366},
	DOI = {10.1533/9780857099402},
	URL = {https://doi.org/10.1533/9780857099402},
}

@article {Protter2020SPA,
	AUTHOR = {Protter, P. and Qiu, L. and Martin, J. S.},
	TITLE = {Asymptotic error distribution for the {E}uler scheme with
	locally {L}ipschitz coefficients},
	JOURNAL = {Stochastic Process. Appl.},
	FJOURNAL = {Stochastic Processes and their Applications},
	VOLUME = {130},
	YEAR = {2020},
	NUMBER = {4},
	PAGES = {2296--2311},
	ISSN = {0304-4149},
	MRCLASS = {60H35 (60F05 60H10)},
	MRNUMBER = {4074703},
	MRREVIEWER = {C\'{o}nall Kelly},
	DOI = {10.1016/j.spa.2019.07.003},
	URL = {https://doi.org/10.1016/j.spa.2019.07.003},
}

@article {Protter1998AOP,
	AUTHOR = {Jacod, J. and Protter, P.},
	TITLE = {Asymptotic error distributions for the {E}uler method for
	stochastic differential equations},
	JOURNAL = {Ann. Probab.},
	FJOURNAL = {The Annals of Probability},
	VOLUME = {26},
	YEAR = {1998},
	NUMBER = {1},
	PAGES = {267--307},
	ISSN = {0091-1798},
	MRCLASS = {60H10 (60G44 65U05)},
	MRNUMBER = {1617049},
	MRREVIEWER = {Denis Talay},
	DOI = {10.1214/aop/1022855419},
	URL = {https://doi.org/10.1214/aop/1022855419},
}

@book {Kloeden1992,
	AUTHOR = {Kloeden, P. E. and Platen, E.},
	TITLE = {Numerical {S}olution of {S}tochastic {D}ifferential {E}quations},
	SERIES = {Applications of Mathematics (New York)},
	VOLUME = {23},
	PUBLISHER = {Springer-Verlag, Berlin},
	YEAR = {1992},
	PAGES = {xxxvi+632},
	ISBN = {3-540-54062-8},
	MRCLASS = {60H10 (34A50 34F05 65L99 65P05)},
	MRNUMBER = {1214374},
	MRREVIEWER = {G. N. Mil\cprime shte\u{\i}n},
	DOI = {10.1007/978-3-662-12616-5},
	URL = {https://doi.org/10.1007/978-3-662-12616-5},
}

@article {Fukasawa2023,
	AUTHOR = {Fukasawa, M. and Ugai, T.},
	TITLE = {Limit distributions for the discretization error of stochastic
	{V}olterra equations with fractional kernel},
	JOURNAL = {Ann. Appl. Probab.},
	FJOURNAL = {The Annals of Applied Probability},
	VOLUME = {33},
	YEAR = {2023},
	NUMBER = {6B},
	PAGES = {5071--5110},
	ISSN = {1050-5164},
	MRCLASS = {60H20 (60F17)},
	MRNUMBER = {4677728},
	DOI = {10.1214/23-aap1941},
	URL = {https://doi.org/10.1214/23-aap1941},
}

@book {Milsteinbook,
	AUTHOR = {Milstein, G. N. and Tretyakov, M. V.},
	TITLE = {Stochastic {N}umerics for {M}athematical {P}hysics},
	SERIES = {Scientific Computation},
	PUBLISHER = {Springer-Verlag, Berlin},
	YEAR = {2004},
	PAGES = {xx+594},
	ISBN = {3-540-21110-1},
	MRCLASS = {60-02 (35R60 37M15 60H10 60H15 60H35 65C30)},
	MRNUMBER = {2069903},
	MRREVIEWER = {Peter E. Kloeden},
	DOI = {10.1007/978-3-662-10063-9},
	URL = {https://doi.org/10.1007/978-3-662-10063-9},
}

@article {ZhangZQ13,
	AUTHOR = {Tretyakov, M. V. and Zhang, Z.},
	TITLE = {A fundamental mean-square convergence theorem for {SDE}s with
	locally {L}ipschitz coefficients and its applications},
	JOURNAL = {SIAM J. Numer. Anal.},
	FJOURNAL = {SIAM Journal on Numerical Analysis},
	VOLUME = {51},
	YEAR = {2013},
	NUMBER = {6},
	PAGES = {3135--3162},
	ISSN = {0036-1429},
	MRCLASS = {60H35 (60H10 65C30)},
	MRNUMBER = {3129758},
	MRREVIEWER = {Peter E. Kloeden},
	DOI = {10.1137/120902318},
	URL = {https://doi.org/10.1137/120902318},
}

@article{HJWY24,
	AUTHOR={Hong, J. and Jin, D. and Wang, X. and Yang, G.},
	TITLE={Asymptotic error distribution of accelerated exponential {E}uler method for     
	parabolic {SPDE}s},
	NOTE={Preprint, arXiv:\ 2409.13827},
}

@incollection {Jacod97,
	AUTHOR = {Jacod, J.},
	TITLE = {On {C}ontinuous {C}onditional {G}aussian {M}artingales and {S}table
	{C}onvergence in {L}aw},
	BOOKTITLE = {S\'{e}minaire de {P}robabilit\'{e}s, {XXXI}},
	SERIES = {Lecture Notes in Math.},
	VOLUME = {1655},
	PAGES = {232--246},
	PUBLISHER = {Springer, Berlin},
	YEAR = {1997},
	MRCLASS = {60G15 (60F05 60G44 60G48)},
	MRNUMBER = {1478732},
	MRREVIEWER = {Ireneusz Szyszkowski},
	DOI = {10.1007/BFb0119308},
	URL = {https://doi.org/10.1007/BFb0119308},
}

@article {Sabanis2016,
	AUTHOR = {Sabanis, S.},
	TITLE = {Euler approximations with varying coefficients:\ the case of
	superlinearly growing diffusion coefficients},
	JOURNAL = {Ann. Appl. Probab.},
	FJOURNAL = {The Annals of Applied Probability},
	VOLUME = {26},
	YEAR = {2016},
	NUMBER = {4},
	PAGES = {2083--2105},
	ISSN = {1050-5164,2168-8737},
	MRCLASS = {60H35 (65C30)},
	MRNUMBER = {3543890},
	MRREVIEWER = {Minoo\ Kamrani},
	DOI = {10.1214/15-AAP1140},
	URL = {https://doi.org/10.1214/15-AAP1140},
}

@incollection {Gylemma,
	AUTHOR = {Gy\"ongy, I. and Krylov, N.},
	TITLE = {On the rate of convergence of splitting-up approximations for
	{SPDE}s},
	BOOKTITLE = {Stochastic inequalities and applications},
	SERIES = {Progr. Probab.},
	VOLUME = {56},
	PAGES = {301--321},
	PUBLISHER = {Birkh\"auser, Basel},
	YEAR = {2003},
	ISBN = {3-7643-2197-0},
	MRCLASS = {65C30 (60H15 60H35 93E11)},
	MRNUMBER = {2073438},
	MRREVIEWER = {Vivek\ S.\ Borkar},
}

@article {Hutzen2012,
	AUTHOR = {Hutzenthaler, M. and Jentzen, A. and Kloeden, P.
	E.},
	TITLE = {Strong convergence of an explicit numerical method for {SDE}s
	with nonglobally {L}ipschitz continuous coefficients},
	JOURNAL = {Ann. Appl. Probab.},
	FJOURNAL = {The Annals of Applied Probability},
	VOLUME = {22},
	YEAR = {2012},
	NUMBER = {4},
	PAGES = {1611--1641},
	ISSN = {1050-5164,2168-8737},
	MRCLASS = {65C30 (60H35)},
	MRNUMBER = {2985171},
	MRREVIEWER = {Andreas\ R\"o\ss ler},
	DOI = {10.1214/11-AAP803},
	URL = {https://doi.org/10.1214/11-AAP803},
	
}

@article {SDR2025,
	AUTHOR = {Hong, J. and Liang, G. and Sheng, D.},
	TITLE = {Asymptotic error distributions of symplectic and
	non-symplectic methods for stochastic {H}amiltonian system
	with additive noise},
	JOURNAL = {Discrete Contin. Dyn. Syst.},
	FJOURNAL = {Discrete and Continuous Dynamical Systems},
	VOLUME = {48},
	YEAR = {2026},
	PAGES = {447--468},
	ISSN = {1078-0947,1553-5231},
	MRCLASS = {65C30 (60H35 65P10)},
	MRNUMBER = {4976471},
	DOI = {10.3934/dcds.2025148},
	URL = {https://doi.org/10.3934/dcds.2025148},
}

@article {Huzen2011,
	AUTHOR = {Hutzenthaler, M. and Jentzen, A. and Kloeden, P.
	E.},
	TITLE = {Strong and weak divergence in finite time of {E}uler's method
	for stochastic differential equations with non-globally
	{L}ipschitz continuous coefficients},
	JOURNAL = {Proc. R. Soc. Lond. Ser. A Math. Phys. Eng. Sci.},
	FJOURNAL = {Proceedings of The Royal Society of London. Series A.
	Mathematical, Physical and Engineering Sciences},
	VOLUME = {467},
	YEAR = {2011},
	NUMBER = {2130},
	PAGES = {1563--1576},
	ISSN = {1364-5021,1471-2946},
	MRCLASS = {65C30 (60H10 60H35)},
	MRNUMBER = {2795791},
	MRREVIEWER = {Henri\ Schurz},
	DOI = {10.1098/rspa.2010.0348},
	URL = {https://doi.org/10.1098/rspa.2010.0348},
}

@book {KolomoContinuous,
	AUTHOR = {Khoshnevisan, D.},
	TITLE = {Analysis of {S}tochastic {P}artial {D}ifferential {E}quations},
	SERIES = {CBMS Regional Conference Series in Mathematics},
	VOLUME = {119},
	PUBLISHER = {Conference Board of the Mathematical Sciences, Washington, DC;
	by the American Mathematical Society, Providence, RI},
	YEAR = {2014},
	PAGES = {viii+116},
	ISBN = {978-1-4704-1547-1},
	MRCLASS = {60H15 (35R60 60H30)},
	MRNUMBER = {3222416},
	MRREVIEWER = {Sergey\ V.\ Lototsky},
	DOI = {10.1090/cbms/119},
	URL = {https://doi.org/10.1090/cbms/119},
}

@article{Jin2025,
	title={Asymptotic error distribution for stochastic {R}unge--{K}utta methods of strong order one},
	author={Jin, D.},
	Note={Prepint, arXiv:\ 2506.08937},

}

@book{da2014stochastic,
	title={Stochastic {E}quations in {I}nfinite {D}imensions},
	author={Da Prato, G. and Zabczyk, J.},
	volume={152},
	year={2014},
	publisher={Cambridge university press}
}

@article{gyongy2022existence,
	title={Existence of strong solutions for {I}t{\^o}’s stochastic equations via approximations:\ revisited},
	author={Gy{\"o}ngy, I. and Krylov, N.V.},
	journal={Stochastics and Partial Differential Equations:\ Analysis and Computations},
	volume={10},
	number={3},
	pages={693--719},
	year={2022},
	publisher={Springer}
}
	
\end{document}